\newtheorem{theorem}{Theorem}[section]
\newtheorem{lemma}[theorem]{Lemma}
\newtheorem{corollary}[theorem]{Corollary}
\newtheorem{proposition}[theorem]{Proposition}
\theoremstyle{definition}
\newtheorem{remark}[theorem]{Remark}
\newtheorem{definition}[theorem]{Definition}
\theoremstyle{remark}
\newtheorem{notation}{Notation}
\renewcommand{\eqref}[1]{(\ref{#1})}
\renewcommand{\bigskip}{\vspace{0.2cm}}
\begin{document}

\title{Stable Reduction of $X_0(p^4)$}

\maketitle

\begin{center}
{\sc Takahiro Tsushima}\\
{\it
Graduate School of Mathematical Sciences,
The University of Tokyo, 3-8-1 Komaba, Meguro-ku
Tokyo 153-8914, JAPAN}
\\
E-mail: tsushima@ms.u-tokyo.ac.jp
\end{center}
\maketitle
\begin{abstract}
R.\ Coleman and K.\ McMurdy calculated 
the stable reduction of $X_0(p^3)\ (p \geq 13)$
 on the basis of the rigid geometry in \cite{CM}.    
In this paper, we determine
the stable model of $X_0(p^4)$ for primes $p \geq 13$ on the basis of their idea.
In the stable model of $X_0(p^4),$ a certain number of copies of the Deligne-Lusztig curve 
for ${\rm SL}_2(\mathbb{F}_p),$ defined by $a^p-a=t^{p+1},$ appear.
\end{abstract}

\section{Introduction}
\noindent
Let $n$ be an integer and $p$ a prime number.
It is known that if $n \geq 3$ and $p \geq 5,$
or if $n \geq 1$ and $p \geq 11,$
the modular curve $X_0(p^n)$
does not have a model with good reduction
over the ring of integers of any complete subfield of $\mathbb{C}_p.$
By a model for a scheme $X$ over a complete local field $K,$
we mean a scheme $\mathcal{X}$ over the ring of integers
$\mathcal{O}_K$ of $K$ such that $X \simeq \mathcal{X} \otimes_{\mathcal{O}_K} K.$ 
When a curve $C$ over $K$
does not have a model with good reduction
over $\mathcal{O}_K,$
it may have the ``next best thing,"
i.e., {\it a stable model}.
The stable model is unique up to isomorphism if it exists,
and it does over the ring of integers in
some finite extension of $K,$
as long as the genus of the curve is at least $2,$
which is proved by Deligne and Mumford in \cite{DM}.
Moreover, if $\mathcal{C}$
is a stable model for $C$
over $\mathcal{O}_K,$
and $K \subset L \subset \mathbb{C}_p,$
then $\mathcal{C} \otimes_{\mathcal{O}_K} \mathcal{O}_L$
is a stable model for $C \otimes_K L$ over
$\mathcal{O}_L.$
The special fiber of any stable model for $C$
is called the stable reduction.

The stable models of $X_0(p)$ and
$X_0(p^2)$ were previously known,
due to works of J-I.\ Igusa and Deligne-Rapoport \cite[Section 7.6]{DR},
 and B.\ Edixhoven \cite[Theorem 2.1.2]{E} respectively. 
In \cite{CM}, R.\ Coleman and K.\ McMurdy calculated 
the stable reduction of $X_0(p^3)$
on the basis of the rigid geometry. They use the Woods Hole theory in \cite{WH} 
and the Gross-Hopkins theory in \cite{GH}
 to analyze the supersingular locus. 
 They also determine 
 the stable reduction of $X_0(Np^3)$ with $(N,p)=1$
and compute the inertia action on the stable model
 of $X_0(p^3)$ in \cite{CM2}. 
See \cite[Introduction]{CM}
for other prior results regarding the stable models
of modular curves at prime power levels.
Using the type theory of Bushnell-Kutzko, 
J.\ Weinstein makes a conjecture on the stable model of the modular curve $X_0(p^n)$ in \cite{W}.

In this paper, we calculate 
the stable reduction of $X_0(p^4)$  on the basis of the ideas in \cite{CM}.
The approach of this paper is rigid analytic as in loc.\ cit.
Our strategy to find the stable model is the same as the one in loc.\ cit.
We construct a stable model of $X_0(p^4)$ by actually constructing
a stable covering by wide opens.
The concept of the stable covering is invented by Coleman to
compute the stable reduction of a curve over a local field.
See \cite[subsections 2.2 and 2.3]{CM} or \cite[Section 1]{CW}
for the notions of wide open space and stable covering.
The groundwork in a rigid analytic setting 
has been done in \cite[Section 2]{CM}. See also \cite{C1}.
A covering of the ordinary locus of the modular curve $X_0(p^n)$
can be obtained by extending the ordinary affinoids $\mathbf{X}^{\pm}_{a,b}$
 with $a+b=n,a \geq 0,b \geq 0$ defined in \cite{C2} to wide open neighborhoods $W^{\pm}_{a,b}.$
See also \cite[Section 13]{KM} for the treatment of the ordinary locus.
The ordinary regions of $X_0(p^2)$ and $X_0(p^3)$
are covered by four wide opens $W_{2,0},W^{\pm}_{1,1},W_{0,2}$
and six wide opens $W_{3,0},W^{\pm}_{2,1},W^{\pm}_{1,2},W_{0,3}$
respectively as in \cite[subsection 3.2, Theorem 5.3 and Theorem 9.2]{CM}.
Similarly as the stable coverings of $X_0(p^2)$ and $X_0(p^3),$
the ordinary region of $X_0(p^4)$
is covered by eight wide opens, which are denoted by
$W_{4,0}, W^{\pm}_{3,1},W^{\pm}_{2,2}, W^{\pm}_{1,3}$ and $W_{0,4}.$
See subsection 5.1 for the definitions of these spaces.
As explained above, these spaces contain affinoid subdomains $\mathbf{X}^{\pm}_{a,b}\ 
(a+b=4, a \geq 0, b \geq 0)$, whose reductions are known to be
the Igusa curves ${\rm Ig}(p^{{\rm min}(a,b)}).$ 
This fact is proved in \cite{C2}.
See also \cite[Proposition 3.6]{CM}. Let $\overline{W}^{\pm}_{a,b}$
denote the reduction of the space $W^{\pm}_{a,b}.$

The supersingular locus essentially breaks up into the union of finitely many deformation spaces of height $2$
formal groups with level structure.
We produce a covering of the supersingular locus on the basis of Coleman-McMurdy's
ideas in \cite{CM} and \cite{Mc}. Finally, we show
that the ``genus" of the covering is the genus of $X_0(p^4),$
and therefore that the overall covering is stable.
See subsection 5.3 for the genus computation. 
It is known that 
all of the connected components of the supersingular locus
of $X_0(p^n)$ are (nearly) isomorphic, which is proved in \cite[Proposition 4.2]{CM}.
Because of this fact, to compute the stable model
of the modular curve $X_0(p^n),$
it suffices to analyze the tubular neighborhood in $X_0(p^n)$
of one supersingular elliptic curve $A/\mathbb{F}_p.$
We denote by $W_A(p^n)$ the tubular neighborhood of $A$
in $X_0(p^n).$
We will analyze the 
supersingular locus $W_A(p^4)$ in detail and construct a stable covering of $X_0(p^4).$
Unlike $W_A(p^2),$ however,
$W_A(p^3)$ and $W_A(p^4)$ must itself 
be covered by smaller wide opens, because its reductions contain multiple irreducible components
as mentioned in \cite[Section 1.1]{CM}.

We define several rigid analytic subspaces of $W_A(p^n)$ on the basis of the idea
of \cite[subsection 5.1]{Mc},
whose reductions are expected to play a key role 
in the stable reductions
of the modular curves. 
We set $i(A):=|{\rm Aut}(A)|/2.$
The space $W_A(p)$ is known to be isomorphic to an annulus $A(p^{-i(A)},1).$
See Notation below for our rigid analytic notation.
We fix an isomorphism $W_A(p) \simeq A(p^{-i(A)},1)$ appropriately. See subsection 2.2
for the identification.
We focus on the circles $\mathbf{TS}_A:=C[p^{-\frac{pi(A)}{p+1}}] \subset W_A(p)$
and $\mathbf{SD}_A:=C[p^{-i(A)/2}] \subset W_A(p).$
Let $\pi_f,\pi_v:X_0(p^n) \longrightarrow X_0(p^{n-1})$ be level-lowering maps.
See Definition \ref{low} for these maps.
Let $a,b \in \mathbb{Z}_{\geq 0}.$ We put $\pi_{a,b}:=\pi_v^a \circ \pi_f^b.$
Let $n$ be a positive integer. Assume that $a,b$ are positive integers.
We define as follows
$$\mathbf{Y}^A_{a,b}:=\pi^{-1}_{a,b-1}(\mathbf{TS}_A) \subset W_A(p^n)$$
with $a+b=n \geq 2$ and 
$$\mathbf{Z}^A_{a,b}:=\pi_{a,b}^{-1}(\mathbf{SD}_A) \subset W_A(p^n)$$
with $a+b=n-1 \geq 2.$
See subsection 2.3 for more detail.
As mentioned above, 
the reductions of these affinoids
 are expected to play a fundamental role in the stable models of modular curves.
Let $\overline{\mathbf{Y}}^A_{a,b}$ and $\overline{\mathbf{Z}}^A_{a,b}$
denote the reductions of the spaces
${\mathbf{Y}}^A_{a,b}$ and ${\mathbf{Z}}^A_{a,b}$
respectively.
Actually, the reduction $\overline{\mathbf{Y}}^A_{1,1}$
appears in the stable reduction of $X_0(p^2)$ as in \cite{E},
 and the reductions $\overline{\mathbf{Y}}^A_{2,1}, \overline{\mathbf{Y}}^A_{1,2}$
  and $\overline{\mathbf{Z}}^A_{1,1}$ appear in the stable reduction of $X_0(p^3)$
  as in \cite{CM}.
The main parts of the works \cite{E} and \cite{CM} are in calculating
the reductions of ${\mathbf{Y}}^A_{1,1} \subset W_A(p^2)$ and 
$\mathbf{Z}^A_{1,1} \subset W_A(p^3)$ respectively.
In \cite[Lemma 5.1 and Proposition 7.1]{CM}, to calculate the reductions $\overline{\mathbf{Y}}^A_{1,1}$
and $\overline{\mathbf{Z}}^A_{1,1},$
 Coleman-McMurdy consider an embedding of them into a product of 
 the subspaces of $W_A(p),$ and apply de Shalit's
 approximation theorem for the map $\pi_f:W_A(p) \longrightarrow W_A(1).$
 See Theorem \ref{thm} for de Shalit's theorem. 

In this paper, to compute irreducible components which appear in the reduction of $W_A(p^4),$
we consider an embedding into a product of an open unit ball $W_A(1) \subset X_0(1)$ and 
use the {\it Kronecker polynomial.}
Actually, de Shalit's approximation theorem is deduced from the shape
of Kronecker's polynomial in \cite{dSh}.
The Kronecker polynomial $F_p(X,Y)=(X^p-Y)(X-Y^p)+pf(X,Y) \in \mathbb{Z}[X,Y]$ 
gives a plane model of $X_0(p)$ over $\mathbb{Q}.$
See subsection 2.1. 

In the following, we will explain the shape of the stable model of $X_0(p^4).$
As mentioned above, we fix a supersingular elliptic curve $A/\mathbb{F}_p$
with $j(A) \neq 0, 1728$ and analyze the locus $W_A(p^4).$ 
First of all, the reduction of $W_A(p^4)$
contains two isomorphic lifts 
$\overline{\mathbf{Y}}^A_{1,3}$ and $\overline{\mathbf{Y}}^A_{3,1}$
of a supersingular component $\overline{\mathbf{Y}}^A_{1,1}$ of $X_0(p^2),$
with each meeting exactly three 
of the ordinary components.
For example, the reduction $\overline{\mathbf{Y}}^A_{3,1}$ 
meets the reductions of $W_{4,0},W^{\pm}_{3,1}.$
The component $\overline{\mathbf{Y}}^A_{1,1}$ of $X_0(p^2)$ is the ``horizontal component''
found by Edixhoven in \cite[Theorem 2.1.2]{E}.
The curve  $\overline{\mathbf{Y}}^A_{1,1}$ is defined by the equation $$xy(x-y)^{p-1}=1$$
and its genus is equal to $(p-1)/2.$
Coleman-McMurdy give a rigid analytic interpretation to the horizontal component of Edixhoven in 
\cite[Proposition 5.2]{CM}.
See subsection 4.2 for more detail.
Furthermore, the reduction of $W_A(p^4)$
contains two isomorphic lifts 
$\overline{\mathbf{Z}}^A_{1,2}$ and $\overline{\mathbf{Z}}^A_{2,1}$
of a supersingular component $\overline{\mathbf{Z}}^A_{1,1}$ 
in the stable reduction of
 $X_0(p^3).$
The component $\overline{\mathbf{Z}}^A_{1,1}$ 
is found by Coleman-McMurdy in \cite[Proposition 8.2]{CM}, which they call
 the ``bridging component''.
The curve $\overline{\mathbf{Z}}^A_{1,1}$
is defined by $$Z^p+X^{p+1}+\frac{1}{X^{p+1}}=0$$ and its genus is equal to $0.$
This curve has $2(p+1)$ singular points at $X=\zeta$ with $\zeta^{2(p+1)}=1.$
Furthermore, in the stable reduction of $X_0(p^3),$
the component $\overline{\mathbf{Z}}^A_{1,1}$
 meets (in distinct points) a certain number
of isomorphic copies of a curve of genus $(p-1)/2$ defined by $a^p-a=s^2.$
This phenomenon for $X_0(p^3)$ is first observed by Coleman-McMurdy
in \cite{CM}.
See Section 3 for the computation of these components.
Similarly as above, in the stable reduction of $X_0(p^4),$
the components $\overline{\mathbf{Z}}^A_{1,2}$ and $\overline{\mathbf{Z}}^A_{2,1}$
 meet (in distinct points) a certain number
of isomorphic copies of a curve of genus $(p-1)/2$ defined by $a^p-a=s^2.$
In the stable reduction of $X_0(p^4),$ these two ``old" components
$\overline{\mathbf{Z}}^A_{1,2}$ and $\overline{\mathbf{Z}}^A_{2,1}$
 are connected through a central
component $\overline{\mathbf{Y}}^A_{2,2},$ which we call the ``new bridging component" in 
the stable reduction of $X_0(p^4).$
This curve $\overline{\mathbf{Y}}^A_{2,2}$
is defined by the following equations
\begin{equation}\label{kk}
xy(x-y)^{p-1}=1,Z^p+1+\frac{1}{x^{p+1}}+\frac{1}{y^{p+1}}=0,
\end{equation}
and its genus is equal to $(p-1)/2.$
The component $\overline{\mathbf{Y}}^A_{2,2}$
 meets exactly two ordinary components $\overline{W}_{2,2}^{\pm}.$
The curve (\ref{kk}) has $(p+1)$ singular points at $(x,y)=(-\zeta,\zeta)$
with $\zeta^{p+1}=-1.$ See Corollary \ref{good} for more detail.
To complete the picture, the new bridging component $\overline{\mathbf{Y}}^A_{2,2}$
then meets (in distinct points) a certain number
of isomorphic copies of a curve of genus $p(p-1)/2$, defined by $a^p-a=t^{p+1}.$
This curve is called the {\it Deligne-Lusztig curve for $SL_2(\mathbb{F}_p).$}
This is a new phenomenon that is observed in the stable reduction of $X_0(p^4).$
See Corollary \ref{op} for more detail.
The phenomenon for $X_0(p^4)$ is compatible with the Weinstein 
conjecture mentioned above.
In Section 4, all irreducible components explained above in the stable reduction
of $X_0(p^4)$ are calculated. 

In the table below, 
we give the following pairs $(x,y)$
\begin{itemize}
\item $x$ is the genus of the component  
$\overline{\mathbf{Z}}^A_{a,b}\ (a+b=3)$ or 
$\overline{\mathbf{Y}}_{a,b}^A\ (a+b=4).$ \\
\item $y$ is the number of copies of the curve 
$a^p-a=s^2$ or
$a^p-a=t^{p+1}$ which intersects the component
$\overline{\mathbf{Z}}^A_{a,b}\ (a+b=3)$ or $\overline{\mathbf{Y}}_{a,b}^A\ (a+b=4).$
\end{itemize}
\begin{center}
{\renewcommand\arraystretch{2}
\begin{tabular}{|c|c|c|c|}
\hline  & $\overline{\mathbf{Z}}^A_{1,2},
\overline{\mathbf{Z}}^A_{2,1}$ & $\overline{\mathbf{Y}}^A_{1,3},\overline{\mathbf{Y}}^A_{3,1}$ 
& $\overline{\mathbf{Y}}^A_{2,2}$ \\ 
\hline  $j(A)=0$ & $(0,\frac{2(p+1)}{3})$ & $(\frac{p-5}{6},0)$ & 
$(\frac{p-5}{6},\frac{p+1}{3})$ \\ 
\hline  $j(A)=1728$ & $(0,p+1)$ & $(\frac{p-3}{4},0)$ &  $(\frac{p-3}{4},\frac{p+1}{2})$\\ 
\hline  otherwise & $(0,2(p+1))$ & $(\frac{p-1}{2},0)$ &  $(\frac{p-1}{2},p+1)$\\ 
\hline 
\end{tabular}}
\end{center} 
\begin{center}
Table 1: Genera of Supersingular Components of $X_0(p^4)$ etc.
\end{center}
 Partial dual graphs of the stable reductions
of $X_0(p^n)\ (2 \leq n \leq 4),$
 including one complete supersingular region,
are given below.

\centerline{
\xygraph{
\circ ([]!{+(0,+.3)} {\overline{W}_{2,0}})
(-[l]\cdots)(-[r]\circ ([]!{+(0,+.3)} {\overline{W}^+_{1,1}})-[r]
\circ ([]!{+(0,+.3)} {\overline{\mathbf{Y}}^A_{1,1}})
(-[r]\circ ([]!{+(0,+.3)} {\overline{W}^{-}_{1,1}})
(-[r]\circ([]!{+(-.1,+.3)} {\overline{W}_{0.2}})(-[r] \cdots))))}}
\begin{center}
Figure 1: A partial dual graph of $X_0(p^2)$.
\end{center}
\centerline{
\xygraph{
\circ ([]!{+(0,+.3)} {\overline{W}^{-}_{2,1} })(-[l]\cdots)(-[r]
\circ ([]!{+(+.1,+.3)} {\overline{\mathbf{Y}}^A_{2,1}})
(-[dl] \circ ([]!{+(0,-.3)} {\overline{W}^+_{2,1}})(-[l] \cdots))
(-[ul] \circ
([]!{+(0,+.3)} {\overline{W}_{3,0}})(-[l] \cdots))(-[r]
\circ ([]!{+(0,+.3)} {\overline{\mathbf{Z}}^A_{1,1}})
(-[r]
\circ([]!{+(-.1,+.3)} {\overline{\mathbf{Y}}^A_{1,2}})
(-[r] \circ([]!{+(0,+.3)} {\overline{W}^{-}_{1,2}})(-[r] \cdots))
(-[ur] \circ([]!{+(0,+.3)} {\overline{W}_{0,3}})(-[r] \cdots)) 
(-[dr] \circ([]!{+(0,-.3)} {\overline{W}^{+}_{1,2}})(-[r] \cdots))))))}}
\begin{center}
Figure 2: A partial dual graph of $X_0(p^3)$.
\end{center}

\centerline{
\xygraph{
\circ ([]!{+(0,+.3)} {\overline{W}^{-}_{3,1} })(-[l]\cdots)(-[r]
\circ ([]!{+(+.1,+.3)} {\overline{\mathbf{Y}}^A_{3,1}})(-[dl] \circ ([]!{+(0,-.3)} 
{\overline{W}^+_{3,1}})(-[l] \cdots))(-[ul] \circ
([]!{+(0,+.3)} {\overline{W}_{4,0}})(-[l] \cdots))(-[r]
\circ ([]!{+(0,+.3)} {\overline{\mathbf{Z}}^A_{2,1}})-[r]
\circ ([]!{-(+.4,+.3)} {\overline{\mathbf{Y}}^A_{2,2}})(-[d] \circ ([]!{+(-.3,-.3)} 
{\overline{W}^{-}_{2,2}})(-[d] \vdots))
(-[u] \circ([]!{+(+.3,+.3)} {\overline{W}^{+}_{2,2}})(-[u] \vdots))(-[r]
\circ ([]!{+(0,+.3)} {\overline{\mathbf{Z}}^A_{1,2}})
(-[r]
\circ([]!{+(-.1,+.3)} {\overline{\mathbf{Y}}^A_{1,3}})(-[r] \circ([]!{+(0,+.3)} 
{\overline{W}^{-}_{1,3}})(-[r] \cdots))
(-[ur] \circ([]!{+(0,+.3)} {\overline{W}_{0,4}})(-[r] \cdots)) 
(-[dr] \circ([]!{+(0,-.3)} {\overline{W}^{+}_{1,3}})(-[r] \cdots))))))}}
\begin{center}
Figure 3: A partial dual graph of $X_0(p^4)$.
\end{center}
In section 5, we construct a covering of $X_0(p^4)$
and prove that it is actually the stable covering. See \cite[subsection 2.3]{CM}
for stable covering.
Each piece of the covering is a wide open space which contains the affinoids
$\mathbf{Y}^A_{a,b}\ (a+b=4)$ and $\mathbf{Z}^A_{a,b}\ (a+b=3)$ defined above
as the underlying affinoids. See \cite[subsection 2.2]{CM} for wide open space. 
Then, we have to show that these wide open spaces are basic.
In other words, it is proved that all intersections of wide open spaces of the covering
 are annuli.
 See loc.\ cit. for basic wide open space.
We also compute intersection multiplicities
of irreducible components
 in the stable reduction of $X_0(p^n)\ (n=2,3,4)$
in Tables 3, 4 and 5 in subsection 5.4.

In subsequent papers, we will investigate the stable models of the modular curve $X_0(p^5)$
and a certain Shimura curve.
We are greatly inspired by the works of Coleman-McCallum in \cite{CW} on the stable reduction
of the quotient of the Fermat curve and of Coleman-McMurdy on the stable reduction of the modular
 curve $X_0(p^3)$ in \cite{CM}. 
 We are also inspired by a conjecture for the stable reduction of $X_0(p^4)$ in \cite{Mc}. 
We would like to thank Professor T.\ Saito  and A.\ Abbes for helpful comments
on this work.
We thank K.\ Nakamura for teaching us the paper \cite{CM}.
We thank Y.\ Mieda for teaching us a conjecture in \cite{W} and pointing out that 
the curve $a^p-a=t^{p+1}$
, which we find in this paper,
is called the Deligne-Lusztig curve for ${\rm SL}_2(\mathbb{F}_p).$
We also thank N.\ Imai for several discussions on this topic and for helpful comments.
We would like to thank T.\ Ito, S.\ Yasuda, T.\ Yamauchi and T.\ Ochiai 
for many suggestions on the applications of our work.
\begin{notation}
We fix some $p$-adic notation.
We let $\mathbb{C}_p$
be the completion of a fixed algebraic closure of $\mathbb{Q}_p,$
with integer ring $\mathbb{R}_p$
and with $\mathfrak{m}_{\mathbb{R}_p}$ 
the maximal ideal of $\mathbb{R}_p.$
For any finite field $\mathbb{F}$
contained in $\bar{\mathbb{F}}:=\mathbb{R}_p/\mathfrak{m}_{\mathbb{R}_p},$
an algebraic closure of $\mathbb{F}_p,$
let $W(\mathbb{F}) \subset \mathbb{R}_p$ denote the ring of Witt
vectors
of $\mathbb{F}.$
Let $v$ denote the unique valuation
on $\mathbb{C}_p$ with $v(p)=1,$
$|\cdot|$
the absolute value given by $|x|=p^{-v(x)}$
and $\mathcal{R}=|\mathbb{C}_p^\ast|=p^{\mathbb{Q}}.$
Throughout the paper, we let $K$
be a complete subfield of $\mathbb{C}_p$
with ring of integers $R_K$ and residue field $\mathbb{F}_K.$
For $r \in \mathcal{R},$
we let $B_K[r]$ and $B_K(r)$
denote the closed and open disk over $K$
of radius $r$
around $0,$ i.e.\
the rigid spaces over $K$
whose $\mathbb{C}_p$-valued points
are $\{x \in \mathbb{C}_p:|x| \leq r\}$ and $\{x \in \mathbb{C}_p:|x|<r\}$
respectively.
If $r, s \in \mathcal{R}$ and $r \leq s,$
let $A_K[r,s]$ and $A_K(r,s)$ be the rigid spaces over $K$
whose $\mathbb{C}_p$-valued points are 
$\{x \in \mathbb{C}_p:r\leq |x| \leq s\}$ and 
$\{x \in \mathbb{C}_p:r<|x|<s\},$
which we call  closed annuli and open annuli.
By the width of such an annulus,
we mean ${\rm log}_p(s/r).$
A closed annuli of width $0$
will be called a circle, which we will also denote the circle, $A_K[s,s],$
by $C_K[s].$

\end{notation}

\section{Preliminaries}
\subsection{Kronecker's polynomial}
We recall Kronecker's polynomial from \cite[Section 0]{dSh}
$$F_p(j,X)=(X-j(p \tau)) \prod_{0 \leq a \leq p-1}(X-j(\frac{\tau+a}{p})).$$
Kronecker proved:
\begin{enumerate}
\item $F_p(j,X) \in \mathbb{Z}[j,X]$ and it gives a plane model
of $X_0(p)$ over $\mathbb{Q}.$
\item $F_p(j,X)=F_p(X,j).$
\item $F_p(j,X)=(X^p-j)(X-j^p)\ ({\rm mod}\ p\mathbb{Z}[j,X]).$
\end{enumerate}
We write $F_p(X,Y)=(X^p-Y)(X-Y^p)+pf(X,Y)$ with some polynomial 
$f(X,Y) \in \mathbb{Z}[X,Y].$

Let $p \geq 13$ be a prime number.
Let $A$ be a supersingular elliptic curve over $\mathbb{F}_p$
with $j(A)\neq 0,1728.$
We denote by $\mathbb{Q}_{p^2}$
the unramified quadratic extension of $\mathbb{Q}_p.$
Let $\mathbb{Z}_{p^2}$ be the integer ring of $\mathbb{Q}_{p^2}.$
Let $\beta \in \mathbb{Z}_{p^2}^\ast$ lift $j(A) \in \mathbb{F}_p.$
We consider the Kronecker polynomial $F_p(X,Y)$ and
 change variables as follows $X_1=X-\beta, Y_1=Y-\beta.$
We put $F^{\beta}_p(X_1,Y_1):=F_p(X_1+\beta,Y_1+\beta).$
\begin{lemma}\label{0}
Let the notation be as above. Then, the followings hold
\begin{enumerate}
\item The polynomial $F^{\beta}_p(X_1,Y_1)$ has the following form
$$(X_1^p-Y_1)(X_1-Y_1^p)+pf_1(X_1,Y_1).$$
\item We have an equality $f_1(X_1,Y_1)=f_1(Y_1,X_1).$
\item $f_1(0,0)$ is a unit.
\end{enumerate}
\end{lemma}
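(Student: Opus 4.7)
My plan is to establish (1) and (2) by direct polynomial calculation, and to derive (3) from the semistable reduction of $X_0(p)$ at $p$.

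For (1) and (2), I expand $F_p(X_1+\beta, Y_1+\beta)$ using $(T+\beta)^p = T^p + \beta^p + p\,g_\beta(T)$ with $g_\beta \in \mathbb{Z}_{p^2}[T]$ (the middle binomial coefficients $\binom{p}{k}$, $1 \le k \le p-1$, are divisible by $p$). The crucial input is $\overline{\beta} = j(A) \in \mathbb{F}_p$, which by Fermat's little theorem gives $\beta^p - \beta = p\,c$ for some $c \in \mathbb{Z}_{p^2}$. Setting $h(T) := c + g_\beta(T)$, one obtains
\[
(X_1+\beta)^p - (Y_1+\beta) = (X_1^p - Y_1) + p\,h(X_1),\qquad (X_1+\beta) - (Y_1+\beta)^p = (X_1 - Y_1^p) - p\,h(Y_1).
\]
Multiplying these out and absorbing the residual $p\,f(X_1+\beta, Y_1+\beta)$ puts $F^\beta_p$ into the claimed form with $f_1 \in \mathbb{Z}_{p^2}[X_1, Y_1]$, proving (1). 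For (2), the symmetry $F_p(X,Y) = F_p(Y,X)$, together with the manifest invariance of $(X^p-Y)(X-Y^p)$ and of $(X_1^p-Y_1)(X_1-Y_1^p)$ under the swap of their two arguments, forces $f_1(X_1, Y_1) = f_1(Y_1, X_1)$.

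The heart of the lemma is (3). Evaluating at $X_1=Y_1=0$ gives $p\,f_1(0,0) = F_p(\beta,\beta)$, so the claim reduces to $v(F_p(\beta,\beta)) = 1$. The lower bound $\geq 1$ is immediate from $F_p(X,X) \equiv -(X^p-X)^2 \pmod p$ and $\beta^p \equiv \beta \pmod p$. For equality I would invoke the modular interpretation: $\{F_p = 0\}$ is a model of $X_0(p)$ over $\mathbb{Z}_p$, and by Deligne--Rapoport \cite{DR} (cf.\ \cite{KM}) its semistable reduction has ordinary double points at the supersingular points $(j_0, j_0)$, of thickness $e_{j_0} = i(A) = \#\mathrm{Aut}(A)/2$; this equals $1$ precisely because $j(A) \neq 0, 1728$. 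To read this thickness off $F^\beta_p$ I change coordinates to $U := X_1^p - Y_1$, $V := X_1 - Y_1^p$: the Jacobian $1 - p^2 X_1^{p-1}Y_1^{p-1}$ is a unit at the origin, so the $p$-adic implicit function theorem gives an isomorphism of complete local rings $\mathbb{Z}_{p^2}[[X_1,Y_1]] \cong \mathbb{Z}_{p^2}[[U,V]]$. In the new coordinates $F^\beta_p = UV + p\,\widetilde f_1(U,V)$ with $\widetilde f_1(0,0) = f_1(0,0)$, and a short Weierstrass-type manipulation (completing $U \mapsto U + p\beta'$, $V \mapsto V + p\alpha'$ to remove the $pU\alpha$ and $pV\beta$ terms) shows that $v(\widetilde f_1(0,0)) \geq 1$ would force the thickness to be $\geq 2$; hence thickness one forces $f_1(0,0) \in \mathbb{Z}_{p^2}^\ast$.

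The main obstacle is precisely this translation of the Deligne--Rapoport thickness invariant into the unit statement for $f_1(0,0)$; a direct computational proof of (3) for general $p$ (via, say, $q$-expansions) appears unpleasant, whereas the modular-theoretic route is clean once the change of coordinates $(U,V)$ has been introduced.
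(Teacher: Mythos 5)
Your computation for parts (1) and (2) is the same one the paper performs: expand $(X_1+\beta)^p=X_1^p+\beta^p+pf(X_1)$, use $\beta^p-\beta\in p\mathbb{Z}_{p^2}$, and read off the symmetry of $f_1$ from $F_p(X,Y)=F_p(Y,X)$. For part (3) the paper gives no argument and simply cites de Shalit's Corollary 3; your reduction to $v(F_p(\beta,\beta))=1$ followed by the change of variables $(U,V)=(X_1^p-Y_1,\,X_1-Y_1^p)$ and the appeal to the Deligne--Rapoport thickness $i(A)=1$ is exactly the content of that cited proof, so you have reconstructed the reference rather than taken a genuinely different route; the one step you gloss over is identifying the formal local ring of the plane model $\{F_p^\beta=0\}$ at $(\beta,\beta)$ with that of $X_0(p)$ at the corresponding supersingular point, which holds since $\mathbb{Z}_{p^2}[[U,V]]/(UV-c)$ is already normal and is part of the bookkeeping the cited proof supplies.
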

\begin{proof}
We put
$(X+\beta)^p=X^p+\beta^p+pf(X)$ and write
 $pu=\beta^p-\beta$ with some unit $u \in \mathbb{Z}^*_{p^2}.$
By a direct computation, we obtain an equality
$$F_p(X_1+\beta,Y_1+\beta)=(X_1^p-Y_1)(X_1-Y_1^p)+pf_1(X_1,Y_1)$$
with
$f_1(X_1,Y_1)=(X_1-Y_1^p)(u+f(X_1))-(X_1^p-Y_1)(u+f(Y_1))-p(u+f(X_1))(u+f(Y_1))$
$+f(X_1+\beta,Y_1+\beta).$ Hence the assertions 1, 2 follow.
The assertion 3 is a well-known fact.
For example, see \cite[the proof of Corollary 3 in Section 0]{dSh}.
\end{proof}
We write $X,Y$ for $X_1,Y_1.$
Let $c_0$ be the leading coefficient of $f_1.$
We write $f_1(X,Y)=c_0+g(X,Y).$
We write $g(X,Y)=Xg(X)+h(X)Y+g_1(X,Y)Y^2.$
Let $c_1$ be the leading coefficient of $h(X).$ 
The leading coefficient of $g(X)$
is also $c_1$ by Lemma \ref{0}.2.
\begin{lemma}\label{22}
We consider the equation $F^{\beta}_p(X,Y)=0.$
Assume $0<v(X),v(Y)<1.$
\\(a). Assume that $v(X)<v(Y^p).$
Then, $Y$ is written as follows 
\begin{equation}\label{key'2}
Y=X^p+\frac{pc_0}{X}+\frac{pg(X,Y)}{X}+\sum_{n=1}^{\infty}\biggl(\frac{Y^p}{X}\biggr)^nH(X,Y)
\end{equation}
where we put $H(X,Y):=(pc_0/X)+(pg(X,Y)/X).$
\\1. If we assume $0<v(X)<\frac{1}{p+1},$
we have $v(Y)=pv(X).$
\\2. If $v(X)=\frac{1}{p+1},$
we have $v(Y) \geq \frac{p}{p+1}.$
\\3. If $\frac{1}{p+1}<v(X)<\frac{p}{p+1},$
we have $v(X)+v(Y)=1.$
\\(b). Assume $v(X) \geq v(Y^p),$ then the followings hold
\\4. If $v(X)<\frac{p}{p+1},$ we have $v(Y)=v(X)/p.$
\\5. If $v(X) \geq \frac{p}{p+1},$ we have $v(Y)=\frac{1}{p+1}.$
\end{lemma}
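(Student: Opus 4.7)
My plan is to write the equation as $(X^p-Y)(X-Y^p) = -p\,f_1(X,Y)$ and exploit that $f_1(0,0)=c_0$ is a unit (Lemma~\ref{0}.3), so the right-hand side has valuation exactly~$1$ whenever $v(X),v(Y)>0$. In part~(a) I derive the series expansion~(\ref{key'2}) and read off $v(Y)$ by comparing valuations of its terms; in part~(b) I use the symmetry $f_1(X,Y)=f_1(Y,X)$ from Lemma~\ref{0}.2 to swap $X$ and $Y$ and reduce to part~(a).

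To obtain (\ref{key'2}): the hypothesis $v(X)<v(Y^p)$ forces $v(X-Y^p)=v(X)$, so the geometric series $(X-Y^p)^{-1} = X^{-1}\sum_{n\geq 0}(Y^p/X)^n$ converges. Substituting into $X^p-Y = -pf_1/(X-Y^p)$ and separating the $n=0$ summand yields (\ref{key'2}) with $H=pc_0/X+pg(X,Y)/X$. I then estimate the terms: $v(X^p)=pv(X)$; $v(pc_0/X)=1-v(X)$ since $v(c_0)=0$; $v(pg(X,Y)/X)\geq 1-v(X)+\min(v(X),v(Y))$ because $g$ vanishes at the origin; and $v((Y^p/X)^n H)\geq n(pv(Y)-v(X)) + (1-v(X))$ for $n\geq 1$. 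In case~1 ($v(X)<1/(p+1)$), the inequality $pv(X)<1-v(X)$ makes $X^p$ the strict minimum, so $v(Y)=pv(X)$—after one bootstrap that uses only $v(Y)>v(X)/p$ to confirm every correction strictly exceeds $pv(X)$. In case~2 ($v(X)=1/(p+1)$), the terms $X^p$ and $pc_0/X$ share the common valuation $p/(p+1)$ and may cancel, giving only $v(Y)\geq p/(p+1)$. In case~3 ($1/(p+1)<v(X)<p/(p+1)$), now $pc_0/X$ is the strict minimum, so $v(Y)=1-v(X)$.

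For part~(b), by Lemma~\ref{0}.2 swapping $X$ and $Y$ preserves the equation, and under hypothesis~(b) we have $v(Y)\leq v(X)/p<pv(X)=v(X^p)$, so the swapped form of part~(a) applies and must land in exactly one of its three sub-cases. Case~4 ($v(X)<p/(p+1)$): then $v(Y)\leq v(X)/p<1/(p+1)$, placing us in the swap of sub-case~1 and yielding $v(X)=pv(Y)$. Case~5 ($v(X)\geq p/(p+1)$): the swap of sub-case~1 would force $v(X)=pv(Y)<p/(p+1)$ and the swap of sub-case~3 would force $v(X)=1-v(Y)<p/(p+1)$, both contradicting the hypothesis, so only the swap of sub-case~2 remains, giving $v(Y)=1/(p+1)$.

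The main obstacle is the bootstrap in part~(a): to pin down the exact valuation of $Y$ I must verify that in each sub-case the candidate minimum term truly beats every term of the geometric-series tail, using only the a priori bound $v(Y)>v(X)/p$. The boundary cases~2 and~5 are the most delicate, since two terms share the minimum valuation and only an inequality (case~2) or an elimination among the swapped sub-cases (case~5) is available.
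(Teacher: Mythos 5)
Your proof is correct and essentially identical to the paper's. Both derive (\ref{key'2}) by dividing $F^\beta_p(X,Y)=0$ by $X$, factoring out $(1-Y^p/X)$ (a unit under the hypothesis $v(X)<v(Y^p)$), and expanding the inverse as a geometric series; both then read off $v(Y)$ by comparing $v(X^p)$ with $v(pc_0/X)$ after noting the remaining tail has strictly larger valuation; and both obtain part (b) by swapping $X$ and $Y$ via Lemma \ref{0}.2 and eliminating the swapped sub-case 3. The only cosmetic difference is that the paper also explicitly rules out $v(X)\geq p/(p+1)$ under hypothesis (a) as a completeness check, while you simply do not list that sub-case — but since your (b) argument only needs $v(Y)\leq v(X)/p<1/p<p/(p+1)$, the implicit fourth regime of the swapped (a) never arises and the argument goes through.
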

\begin{proof}
(a). We assume that $v(X)<v(Y^p).$ First, we prove the equality (\ref{key'2}).
Dividing $F^{\beta}_p(X,Y)=0$ by $X,$ we obtain
$$(X^p-Y)(1-\frac{Y^p}{X})+\frac{pc_0}{X}+\frac{pg(X,Y)}{X}=0.$$
By  the assumption that
$v(X)<v(Y^p),$ $1-(Y^p/X)$ is a unit.
Dividing the above equality
by $\{1-(Y^p/X)\}^{-1},$ we acquire
the equality (\ref{key'2}).
We prove the assertions 1, 2 and 3.
We have an inequality $v(\frac{pg(X,Y)}{X}+\sum_{n \geq 1}\bigl(\frac{Y^p}{X}\bigr)^nH(X,Y))
>v(pc_0/X).$
Comparing the valuation
$v(X^p)$ with the valuation $v(pc_0/X),$
we obtain the assertions 1, 2 and 3 by (\ref{key'2}) immediately. 
We show that the case that $v(X)<v(Y^p), v(X) \geq \frac{p}{p+1}$
can not happen. Now assume that $v(X)<v(Y^p), v(X) \geq \frac{p}{p+1}.$
Considering the equality (\ref{key'2}),
we obtain $v(X)+v(Y)=1$ by $ v(X) \geq \frac{p}{p+1}.$
Therefore we acquire $v(Y) \leq 1/(p+1).$
However, by $v(X)<v(Y^p),$
we obtain an inequality
$v(Y)>1/(p+1).$ Hence, this case can not happen. 

(b). Secondly, we consider the case
$v(X) \geq v(Y^p).$
In particular, we have $v(Y)<v(X^p).$
By the symmetry $F^{\beta}_p(X,Y)=F^{\beta}_p(Y,X)$ in Lemma \ref{0},
switching $X$ and $Y$, we acquire the followings
by the assertions 1,2 and 3 proved above;
\\1'. If we assume $0<v(Y)<\frac{1}{p+1},$
we have $v(X)=pv(Y).$
\\2'. If we assume that $v(Y)=\frac{1}{p+1},$
we have $v(X) \geq \frac{p}{p+1}.$
\\3'. If we assume $\frac{1}{p+1}<v(Y)<\frac{p}{p+1},$
we have $v(X)+v(Y)=1.$
\\To prove 4 and 5,
it suffices to show that the case 3' can not happen.
Now assume $\frac{1}{p+1}<v(Y)<\frac{p}{p+1},$
 $v(X)+v(Y)=1.$
By the assumption $v(X) \geq v(Y^p),$
we have $v(X)>p/(p+1).$
However, by $v(X)+v(Y)=1$ and $\frac{1}{p+1}<v(Y)<\frac{p}{p+1},$
we acquire
$v(X)<p/(p+1).$
Hence it can not happen.
Thus the required assertions follow.
\end{proof}
\begin{corollary}\label{key}
Let the notation be as in Lemma \ref{22}.
\\1. We change variables as follows $X=\alpha x,Y=\alpha^py$
with $x$ a unit and
with an element $\alpha$ satisfying $v(\alpha) \leq \frac{1}{p+1}.$
Then, $y$ is written as follows with respect to $x$
$$y=x^p+\frac{pc_0}{\alpha^{p+1}}\frac{1}{x}+\frac{p}{\alpha^p}\{g(\alpha x)+(\alpha x)^{p-1}h(\alpha x)\}+
\frac{p^2}{\alpha^{p+2}}\frac{h(\alpha x)}{x^2}\{c_0+\alpha x g(\alpha x)\}\
 ({\rm mod}\ p\alpha).$$
\\2.  We change variables as follows $X=\alpha x,Y=\alpha y$
with $x,y$ units and with an element $\alpha$ satisfying $v(\alpha)=1/2.$
Then, the following relationship between $x$ and $y$ holds
$$xy=\frac{pc_0}{\alpha^2}+\frac{pc_1}{\alpha}(x+y)\ 
({\rm mod}\ p).$$
\end{corollary}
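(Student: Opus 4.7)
Both parts follow from substituting the indicated rescalings into $F^\beta_p(X,Y)=0$ and tracking which error terms fall inside the stated congruence ideal. For Part 1 I would start from the closed-form expansion (\ref{key'2}) derived in the proof of Lemma \ref{22}; for Part 2 the defining equation can be used directly.

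For Part 1, substitute $X=\alpha x$, $Y=\alpha^p y$ into (\ref{key'2}) and divide by $\alpha^p$. Expanding
\[
g(\alpha x,\alpha^p y)=\alpha x\,g(\alpha x)+h(\alpha x)\,\alpha^p y+g_1(\alpha x,\alpha^p y)\,\alpha^{2p}y^{2}
\]
shows that the $g_1$-contribution has valuation $\geq 1+(p-1)v(\alpha)\geq 1+v(\alpha)$ and so lies in $p\alpha$. The tail $\alpha^{-p}\sum_{n\geq 1}(Y^{p}/X)^{n}H(X,Y)$ has $n$-th term of valuation $\geq 1+(n(p^{2}-1)-p-1)v(\alpha)$; even for $n=1$ this equals $1+(p-2)(p+1)v(\alpha)\geq 1+v(\alpha)$ when $p\geq 3$, so the entire tail vanishes modulo $p\alpha$. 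What remains is an implicit relation $y\equiv A+By\pmod{p\alpha}$ with
\[
A=x^{p}+\frac{pc_{0}}{\alpha^{p+1}x}+\frac{pg(\alpha x)}{\alpha^{p}},\qquad B=\frac{ph(\alpha x)}{\alpha x},
\]
and $v(B)\geq 1-v(\alpha)$. I then iterate once to get $y\equiv A+AB\pmod{p\alpha}$; the remaining error $AB^{2}+AB^{3}+\cdots$ has leading valuation $\geq 2-2v(\alpha)>1+v(\alpha)$ because $v(\alpha)\leq 1/(p+1)<1/3$, hence is absorbed. Expanding $AB$ and recombining produces the three claimed correction terms: $\frac{px^{p-1}h(\alpha x)}{\alpha}=\frac{p(\alpha x)^{p-1}h(\alpha x)}{\alpha^{p}}$ merges with $pg(\alpha x)/\alpha^{p}$ into the middle bracketed expression of the statement, while $\frac{p^{2}c_{0}h(\alpha x)}{\alpha^{p+2}x^{2}}+\frac{p^{2}g(\alpha x)h(\alpha x)}{\alpha^{p+1}x}$ collects into $\frac{p^{2}h(\alpha x)}{\alpha^{p+2}x^{2}}\{c_{0}+\alpha x\,g(\alpha x)\}$.

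For Part 2, rearrange $F^\beta_p(X,Y)=0$ as $XY=X^{p+1}+Y^{p+1}-X^{p}Y^{p}+pc_{0}+pg(X,Y)$, substitute $X=\alpha x$, $Y=\alpha y$ with $v(\alpha)=1/2$, and divide by $\alpha^{2}$. The monomials $X^{p+1}/\alpha^{2}$, $Y^{p+1}/\alpha^{2}$ and $X^{p}Y^{p}/\alpha^{2}$ have valuations $(p-1)/2$, $(p-1)/2$, $p-1$, all $\geq 1$ for $p\geq 13$, so they vanish modulo $p$; likewise the $g_{1}$-piece of $pg(\alpha x,\alpha y)/\alpha^{2}$ has valuation $\geq 1$ and drops. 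Since $v(\alpha x)=v(\alpha y)=1/2$, $h(\alpha x)\equiv h(0)=c_{1}$ and $g(\alpha x)\equiv g(0)=c_{1}$ modulo the maximal ideal, the identity $h(0)=g(0)$ being forced by the symmetry $f_{1}(X,Y)=f_{1}(Y,X)$ of Lemma \ref{0}.2. This yields $xy\equiv pc_{0}/\alpha^{2}+(pc_{1}/\alpha)(x+y)\pmod{p}$. The main obstacle throughout is the valuation bookkeeping in Part 1: the error ideal $p\alpha$ sits in a narrow window around valuation $1$, and several $p^{2}$-cross-terms fall just inside this window while others fall just outside; the critical point is $v(\alpha)=1/(p+1)$, at which the sharp bound $v(Y)\geq pv(X)$ of Lemma \ref{22}.2 is essential for controlling the geometric tail.
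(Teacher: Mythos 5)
Your argument is correct and is essentially the paper's own proof: you substitute into the expansion \eqref{key'2}, estimate valuations to discard the geometric-series tail and the $g_1$-piece modulo $p\alpha$ (resp.\ $p$), and then solve the resulting implicit relation $y\equiv A+By$. The only superficial difference is that you iterate once to get $y\equiv A+AB$ while the paper multiplies by $(1-B)^{-1}\equiv 1+B$, which is the same step; your valuation accounting (the bounds $1+(p-1)v(\alpha)$, $1+(p-2)(p+1)v(\alpha)$, and the requirement $v(\alpha)<1/3$ absorbed by $v(\alpha)\le 1/(p+1)$) matches what the paper uses implicitly.
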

\begin{proof}
We prove the assertion 1.
We have the following congruences
$$(1/\alpha^p)\sum_{n=1}^{\infty}(\alpha^{p^2-1}y^p/x)^nH(\alpha x,\alpha^py)=0\ ({\rm mod}\ p\alpha)$$
and $pg(\alpha x,\alpha^py)/\alpha^{p+1}x=
(pg(\alpha x)/\alpha^p)+(ph(\alpha x)y/\alpha x)\ ({\rm mod}\ p\alpha).$
Hence, by substituting $X=\alpha x,Y=\alpha^py$ to (\ref{key'2}) and dividing by $\alpha^p$,
we obtain 
$$y=x^p+\frac{pc_0}{\alpha^{p+1}}\frac{1}{x}+\frac{p}{\alpha^p}g(\alpha x)+\frac{ph(\alpha x)}{\alpha x}y\
 ({\rm mod}\ p\alpha).$$
Therefore, multiplying by $\bigl(1-(p h(\alpha x)/\alpha x)\bigr)^{-1},$
the congruence
$\bigl(1-(p h(\alpha x)/\alpha x)\bigr)^{-1}=1+(p h(\alpha x)/\alpha x)$
$({\rm mod}\ p\alpha)$ gives the required assertion.
\\We prove the assertion 2.
We have the following congruences
$\alpha^{p-1}x^p=0\ ({\rm mod}\ p),$
$$(1/\alpha)\sum_{n=1}^{\infty}(\alpha^{p-1}y^p/x)^nH(\alpha x,\alpha y)=0\ ({\rm mod}\ p)$$ and
$pg(\alpha x,\alpha y)/\alpha^{2}x=(pc_1/\alpha) \times (1+(y/x))\ ({\rm mod}\ p).$
Hence, by substituting $X=\alpha x,Y=\alpha y$ to (\ref{key'2}) and dividing by $\alpha$,
we obtain 
$$y=(pc_0/\alpha^2x)+(pc_1/\alpha) \cdot (1+(y/x))\ ({\rm mod}\ p).$$
Multiplying by $x,$ we obtain the required assertion.
\end{proof}
\subsection{Canonical subgroups and Supersingular Annuli in \cite[Section 3.1]{CM}}

We briefly recall canonical subgroups and supersingular annuli from \cite[Section 3.1]{CM}.
We think of $X_0(p^n)$
as the rigid analytic curve over $\mathbb{Q}_p$
whose points over $\mathbb{C}_p$
are in a one-to-one correspondence
with isomorphism classes of
pairs, $(E,C)$,
where $E/\mathbb{C}_p$
is a generalized elliptic curve
and $C$ is a cyclic subgroup
of order $p^n.$
We implicitly make use of this correspondence
when we speak loosely
of ``the point $(E,C).$''
For integers $n>m \geq 0,$ there are several natural maps from $X_0(p^n)$
to $X_0(p^m)$
which can be defined by way of this moduli-theoretic interpretation of points, and we begin
this subsection by fixing notations for these fundamental maps.
\begin{definition}\label{low}
Let $C[p^i]$
denote the kernel
of multiplication by $p^i$ in $C.$
Let
$$\pi_f,\pi_v:\coprod_{n \geq 1}X_0(p^n)
\longrightarrow \coprod_{n \geq 0}X_0(p^n)$$
be the maps given by $\pi_f(E,C)=(E,pC)$
and $\pi_v(E,C)=(E/C[p],C/C[p])$ respectively.

Let $a,b \in \mathbb{Z}_{ \geq 0}.$
Then by letting
$\pi_{a,b}=\pi^b_f \circ \pi_v^a,$
we obtain maps
$$\pi_{a,b}:\coprod_{n \geq a+b}X_0(p^n)
\longrightarrow \coprod_{n \geq 0}X_0(p^n).$$
\end{definition}
Another map is the Atkin-Lehner involution
$$w:\coprod_{n \geq 0}X_0(p^n)
\longrightarrow \coprod_{n \geq 0}X_0(p^n),$$
which is defined by the formula,
$$w_n(E,C)=(E/C,E[p^n]/C),$$
where $w_n:=w|_{X_0(p^n)}.$
The Atkin-Lehner involution is compatible with the level-lowering 
maps in the sense that $\pi_f \circ w=w \circ \pi_v$
or equivalently, $w \circ \pi_f=\pi_v \circ w,$
since $w$ is an involution.

We introduce some natural subspaces of $X_0(p^n)$
over finite extension of $\mathbb{Q}_p$
using the theory
of the canonical subgroup, which we now review and cite \cite[subsection 3.1]{CM} or
\cite[Section 3]{B} as our references.
If $E$ is an elliptic curve over $\mathbb{C}_p,$
we let $h(E)$ denote the minimum
of $1$
and the valuation
of a lifting of the Hasse invariant of the reduction of a non-singular model of
$E$ mod $p,$ if it exists, and $0$ otherwise.
In \cite[Section 3]{Ka}, Katz constructed a rigid analytic section
$s_1:X_0(p) \longrightarrow X(1)$
over the wide open $W_1$ whose $\mathbb{C}_p$-valued points
are represented by generalized elliptic curves
$E$ such that $h(E)< \frac{p}{p+1},$
when $p \geq 5.$
Both $W_1$ and $s_1$
are defined over $\mathbb{Q}_p.$
We let $K_1(E) \subset E$
denote the subgroup of order $p$ for which $s_1(E)=(E,K_1(E)),$
and we call $K_1(E)$
{\it the canonical subgroup of order $p.$}
\begin{definition}(\cite[Definition 3.1]{B})
Let $L/\mathbb{Q}_p$ be a finite extension,
and say $E/L$ is an elliptic curve.
Say that $E$ is {\it too supersingular}
if $h(E)\geq \frac{p}{p+1}.$
\end{definition}
 We introduce the theorem of Katz-Lubin in \cite[Theorem 3.3]{B} or in \cite[Theorem 3.1]{Ka},
 which studies
 a relationship between the canonical subgroup $K_1(E)$ and the invariant $h(E).$
\begin{theorem}\label{Ka}
(\cite[Theorem 3.3]{B} or \cite[Theorem 3.1]{Ka})
For a not too supersingular elliptic curve $E/L,$
let $K_1(E) \subset E$ be its canonical subgroup.
We have the followings
\begin{enumerate}
\item If $h(E)=0,$
then $K_1(E)$ is the finite \'{e}tale subgroup of $E$
corresponding to the kernel of the reduction
map from $E[p](\overline{L})$
to the $p$-torsion in the N\'{e}ron model
of $E$ over $\mathcal{O}_L.$
\item If $h(E)<\frac{1}{p+1},$
then $E$ is not too supersingular, and $h(E/K_1(E))=ph(E).$
\item If $h(E)=\frac{1}{p+1},$
then $E/K_1(E)$ is too supersingular.
\item If $h(E) \in (\frac{1}{p+1},\frac{p}{p+1}),$
then $E/K_1(E)$ is again not too supersingular,
$h(E/K_1(E))=1-h(E),$ and furthermore the canonical subgroup
 $K_1(E/K_1(E))$ is $E[p]/K_1(E).$
\item If $h(E)<\frac{p}{p+1}$ and $C \neq K_1(E)$
is a subgroup of order $p$, then $h(E/C)=h(E)/p$
and the canonical subgroup $K_1(E/C)$ 
is $E[p]/C.$
\item If $h(E) \geq \frac{p}{p+1}$ and $C \subset E$
is a subgroup of order $p,$
then $h(E/C)=\frac{1}{p+1}$ and the canonical subgroup 
$K_1(E/C)$ is $E[p]/C.$
\end{enumerate}
\end{theorem}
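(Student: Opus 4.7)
The plan is to work locally on the formal group of a not too supersingular lift $E/\mathcal{O}_L$. Choose a parameter on $\hat{E}$ so that
$[p]_{\hat E}(T) = pT + \cdots + aT^p + \cdots + uT^{p^2} + (\text{higher})$
with $u \in \mathcal{O}_L^\times$. One checks (using the standard description of the Hasse invariant in terms of the Verschiebung on $p$-typical formal group laws) that $a \bmod p$ is, up to a unit, the Hasse invariant of $\bar E$, so $h(E) = \min(1, v(a))$. In this setup, subgroups of $\hat E$ of order $p$ correspond to degree-$p$ Weierstrass factors of $[p]_{\hat E}(T)/T$, and the canonical subgroup, when it exists, will be the factor associated to the steep segment of the Newton polygon.

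Next I would read off the Newton polygon of $[p]_{\hat E}(T)/T$: its lower convex hull has vertices at $(0,1)$, $(p-1, h(E))$, and $(p^2-1, 0)$ when $h(E) < p/(p+1)$, giving two slopes, but collapses to a single slope when $h(E) \ge p/(p+1)$. In the two-slope regime Weierstrass preparation yields $[p]_{\hat E}(T)/T = f(T)\, g(T) \cdot (\text{unit})$ with $\deg f = p-1$, $\deg g = p^2 - p$; the roots of $f$ are the nontrivial elements of $K_1(E)$, of valuation $(1-h(E))/(p-1)$, while the roots of $g$ are non-canonical and have valuation $h(E)/(p(p-1))$. The formula for $h(E/H)$ for $H \subset E[p]$ of order $p$ is then obtained by re-expanding $[p]$ on the quotient formal group $\hat E / \hat H$ and reading off the valuation of the new coefficient of $T^p$.

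From this setup each of the six assertions is a short calculation. Case 1 is ordinary: $\hat E$ has height $1$, $K_1(E)$ is its $p$-torsion, and $E[p]/K_1(E)$ is étale. Cases 2 and 3 are polygon arithmetic: quotienting by $K_1(E)$ multiplies the valuation of the $T^p$-coefficient by $p$, so $h(E/K_1(E)) = p\,h(E)$, which stays below $p/(p+1)$ iff $h(E) < 1/(p+1)$ and hits the boundary $p/(p+1)$ exactly when $h(E) = 1/(p+1)$. Case 4 uses the dual isogeny identity: the composition $E \to E/K_1(E) \to E$ equals $[p]$, so tracking leading coefficients through this factorization yields $h(E) + h(E/K_1(E)) = 1$; the identification $K_1(E/K_1(E)) = E[p]/K_1(E)$ is then forced because the shallow segment on $\hat E$ is exactly what becomes the steep segment on $\hat E / \hat K_1(E)$. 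Cases 5 and 6 are symmetric: any $C \neq K_1(E)$ comes from the shallow factor $g$, and rescaling valuations gives $h(E/C) = h(E)/p$ in case 5 and $h(E/C) = 1/(p+1)$ in case 6, with $K_1(E/C) = E[p]/C$ in each.

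The main obstacle is case 4: both the identity $h(E) + h(E/K_1(E)) = 1$ and the claim $K_1(E/K_1(E)) = E[p]/K_1(E)$ require more than bare polygon bookkeeping on $\hat E$ alone. The cleanest route I see is to exploit the factorization of $[p]$ through $K_1(E)$ and its dual and to verify that the shallow slope of the polygon on $\hat E$ becomes the steep slope on $\hat E / \hat K_1(E)$, pinning down both the numerical relation and the subgroup identification simultaneously; the boundary consistency at $h(E) = 1/(p+1)$ (where case 2 gives $p/(p+1)$ and case 4 gives $1 - 1/(p+1) = p/(p+1)$) is a useful sanity check that the two regimes glue correctly.
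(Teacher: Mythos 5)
The paper does not prove Theorem \ref{Ka}: it is quoted from \cite[Theorem 3.3]{B} and \cite[Theorem 3.1]{Ka}, and those references are where the Newton-polygon argument you sketch actually lives. So your proposal is reconstructing the cited proof rather than differing from anything in this paper. The ingredients are right: normalize so that $h(E)$ is the valuation of the $T^p$-coefficient of $[p]_{\hat E}$, read off the two-slope Newton polygon of $[p]_{\hat E}(T)/T$ with break at $(p-1,h(E))$ valid precisely when $h(E)<p/(p+1)$, identify $K_1(E)$ with the steep-edge roots, and for cases 2--6 pass through the quotient isogeny $\phi$ and its dual with $\hat\phi\circ\phi=[p]_{\hat E}$, $\phi\circ\hat\phi=[p]_{\hat E/\hat H}$. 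Your flag that case~4 genuinely needs the dual isogeny, not just polygon bookkeeping on $\hat E$, is exactly the extra step in Katz--Lubin, so the emphasis is placed correctly.

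One slip worth naming: in case~3 you write that $h(E/K_1(E))$ ``hits the boundary $p/(p+1)$ exactly,'' but the theorem asserts only that $E/K_1(E)$ is too supersingular, i.e., $h(E/K_1(E))\geq p/(p+1)$. At $h(E)=1/(p+1)$ the point $(p-1,h(E))$ ceases to be a genuine vertex of the convex hull, and in the expansion of $[p]_{\hat E/\hat K_1}=\phi\circ\hat\phi$ the two leading contributions to the $T^p$-coefficient become equal in valuation and may cancel, so one only gets the inequality. The same caution applies to your closing ``sanity check'': cases 2 and 4 agreeing at the boundary is a lower-bound consistency, not an equality. It is also worth knowing that Remark~\ref{rem.}, via Lemma~\ref{22} and Corollary~\ref{key3}, offers a more elementary recovery of precisely the numerical $h(E/C)$-versus-$h(E)$ relations in Theorem~\ref{Ka}: one reads $(v(s_1),v(s_2))=(h(E),h(E/C))$ off the Kronecker polynomial $F_p^{\beta}(s_1,s_2)\equiv(s_1^p-s_2)(s_1-s_2^p)\pmod p$ under $(\pi_f,\pi_v)$, with no quotient formal groups or dual isogenies. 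What that route deliberately does not give is the structural identification of $K_1(E/C)$ with $E[p]/C$, which is where your formal-group sketch earns its keep.
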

\begin{remark}
With Corollary \ref{key3} and Remark \ref{rem.}, by Lemma \ref{22}, we can 
recover a part of the statements in Theorem
\ref{Ka}.
\end{remark}
For $n \geq 1,$
we generalize $W_1$ by taking $W_n$
to be the wide open in $X(1)$
where $h(E)>\frac{p^{2-n}}{p+1}.$
For $E \in W_n,$
we define $K_n(E)$
inductively, as in \cite[Definition 3.4]{B},
as the preimage of $K_{n-1}(E/K_1(E))$
under the natural projection $E \longrightarrow E/K_1(E).$
This is a cyclic subgroup of order $p^n$ when $E \in W_n$
 and we call it {\it the canonical subgroup of order $p^n.$}
 
Thus, when $E$ has supersingular reduction, either $h(E) \geq \frac{p}{p+1},$
or there is a largest $n \geq 1$
for which $K_n(E)$ can be defined.
In the first case, we define {\it the canonical subgroup of E},
denoted by $K(E),$ to be the trivial subgroup,
and in the second case,
we let $K(E)=K_n(E)$ for this largest $n.$
\begin{definition}(\cite[Definition 3.3]{CM})
For a fixed elliptic curve $A$ over a finite field $\mathbb{F},$
let $W_A(p^n)$ represent the rigid subspace of $X_0(p^n)$
(over $\mathbb{Q}_p \otimes W(\mathbb{F})$) whose points over $\mathbb{C}_p$
are represented by pairs $(E,C)$ with $\overline{E} \simeq A.$ 
\end{definition}
Of cource, $W_A(1)$ (for any $A$)
is just a residue disk of the $j$-line.
When $A$ is a supersingular elliptic curve, it is well-known
that $W_A(p)$ is isomorphic,
over $\mathbb{Q}_{p^2},$
to an open annulus of width $i(A):=|{\rm Aut}(A)|/2.$
This means that one can choose
a parameter $x_A$ on $W_A(p)$
over $\mathbb{Q}_{p^2}$
which identifies it with
the open annulus $A_{\mathbb{Q}_{p^2}}(p^{-i(A)},1).$
In fact, we can and will always do this in such a way that $v(x_A(E,C))=i(A)h(E)$
when $C=K_1(E)$ and otherwise
$i(A)(1-h(E/C)).$

In \cite[Section 3.1]{CM}, Coleman-McMurdy consider the following concentric circles
under the above identification $W_A(p) \simeq A_{\mathbb{Q}_{p^2}}(p^{-i(A)},1)$
\begin{enumerate}
\item $\mathbf{SD}_A:=C[p^{-\frac{i(A)}{2}}]$
which they call the ``self-dual circle'' or the ``Atkin-Lehner circle,''
\item $\mathbf{TS}_A:=C[p^{-\frac{pi(A)}{p+1}}]$
which they call the ``too-supersingular circle.''
\end{enumerate}
Then, these spaces have the following descriptions
by using Theorem \ref{Ka}
$$\mathbf{SD}_A=\{(E,C) \in W_A(p)|h(E)=1/2,C=K_1(E)\},$$
$$\mathbf{TS}_A=\{(E,C) \in W_A(p)|E:{\rm too}\ {\rm supersingular},h(E/C)=1/(p+1)\}.$$
Generally, for a rational number $q \in (0,1),$
we denote a circle $C[p^{-qi(A)}]$ by $\mathbf{C}^A_q.$
We fix a point $(E,C) \in \mathbf{C}^A_q.$
If we assume that $q < p/(p+1),$
the subgroup $C$ is always the canonical subgroup
again by using Theorem \ref{Ka}.
From now on, we assume that $j(A) \neq 0,1728.$
Hence, we have $i(A)=1.$
Coleman-McMurdy study the following
subspaces $\mathbf{Y}^A_{1,1}:=\pi_v^{-1}(\mathbf{TS}_A) \subset W_A(p^2)$
in \cite[Section 5]{CM},
and $\mathbf{Z}^A_{1,1}:=\pi^{-1}_{1,1}(\mathbf{SD}_A) \subset W_A(p^3)$
in \cite[Section 8]{CM}. 
They calculate the reductions of
$\mathbf{Y}^A_{1,1}$ and $\mathbf{Z}^A_{1,1},$
and deduce the stable reductions
of $X_0(p^2)$ and $X_0(p^3).$
The reduction $\overline{\mathbf{Y}}^A_{1,1}$ which appears in the stable reduction
of $X_0(p^2)$
corresponds to the horizontal component in \cite[Theorem 2.1.2]{E}.
The main part in \cite{CM} is in computing the reduction of the space
$\mathbf{Z}^A_{1,1}.$
We have the following descriptions
$$\mathbf{Y}^A_{1,1}=\{(E,C) \in W_A(p^2)|E/pC:{\rm too}\ {\rm supersingular}, K(E)=pC\},$$
$$\mathbf{Z}^A_{1,1}=\{(E,C) \in W_A(p^3)|h(E)=1/2p,|K(E) \cap C|=p^2\}$$
by using Theorem \ref{Ka}.
Roughly speaking, to calculate the reductions of $\mathbf{Y}^A_{1,1}$ and $\mathbf{Z}^A_{1,1}$
in \cite[Lemma 5.1 and Proposition 7.1]{CM},
Coleman-McMurdy consider embeddings of these spaces 
into products of subspaces of $W_A(p)$ and apply de Shalit's 
approximation theorem for $\pi_f$ in \cite[Theorem 3.5]{CM}.

We recall de Shalit's approximation theorem for $\pi_f$ in
\cite[Theorem 3.5]{CM}.
\begin{theorem}\label{thm}
(\cite[Theorem 3.5]{CM})
Let $A$ be a supersingular elliptic curve with
$j(A) \neq 0,1728.$ 
There are parameters $s$ and $y$ over $\mathbb{Z}_{p^2}$ which identify $W_A(1)$
with the disk $B_{\mathbb{Q}_{p^2}}(1)$ and $W_A(p)$ with the annulus
$A_{\mathbb{Q}_{p^2}}(p^{-1},1),$
and series $F(T),G(T) \in T\mathbb{Z}_{p^2}[[T]],$
such that 
\\1. $w_1^\ast(y)=\kappa/y$ for some $\kappa \in \mathbb{Z}_{p^2}$ with $v(\kappa)=1.$
\\2. $\pi_f^\ast s=F(y)+G(\kappa/y),$ where
 \\(a). $F'(0)=1\ ({\rm mod}\ p),$ and
 \\(b). $G(T)=(F(T))^p\ ({\rm mod}\ p).$
\end{theorem}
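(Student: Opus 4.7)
The plan is to realize $W_A(p)$ explicitly as the plane curve cut out by the Kronecker polynomial, and to construct the parameter $y$ by gluing two natural local uniformizers from its two projections to $W_A(1)$. First I would take $s := j - \beta$ as a parameter identifying $W_A(1)$ with $B_{\mathbb{Q}_{p^2}}(1)$, where $\beta \in \mathbb{Z}_{p^2}^\ast$ lifts $j(A)$. Setting $X := \pi_f^\ast s$ and $Y := \pi_v^\ast s$, Lemma \ref{0} presents $W_A(p)$ inside a bidisk by the equation $F_p^\beta(X,Y) = (X^p - Y)(X - Y^p) + pf_1(X,Y) = 0$, with $f_1(0,0) = c_0$ a unit, and the symmetry $f_1(X,Y) = f_1(Y,X)$ matches the Atkin-Lehner involution $w_1$ which swaps $\pi_f \leftrightarrow \pi_v$, hence swaps $X \leftrightarrow Y$.

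Next I would construct $y$ by gluing across the self-dual circle. On the inner half $\{v(X) < 1/2\}$ of $W_A(p)$, Lemma \ref{22}(a) expresses $Y$ as a power series in $X$, so $X$ is a uniformizer; by symmetry, on the outer half $\{v(Y) < 1/2\}$, $Y$ is a uniformizer. Near the self-dual circle, Corollary \ref{key}(2) shows $XY \equiv pc_0 \pmod{\text{small}}$, so $X$ and $pc_0/Y$ nearly coincide. This motivates taking $y$ to be $X$ corrected by a formal series so that $y$ equals $X$ to leading order on the inner half and $\kappa/Y$ to leading order on the outer half, with $\kappa$ a carefully chosen unit multiple of $pc_0$, so that $w_1^\ast y = \kappa/y$ holds by construction. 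With such a $y$, write $X = \pi_f^\ast s = F(y) + G(\kappa/y)$ where $F$ captures the inner-half contribution and $G$ the outer-half one. Applying $w_1$ gives $Y = F(\kappa/y) + G(y)$. Substituting these into the Kronecker congruence $F_p^\beta(X,Y) \equiv (X^p - Y)(X - Y^p) \pmod{p}$ and using that on the inner half $Y \equiv X^p$ while on the outer half $X \equiv Y^p$, the relation forces $G(T) \equiv F(T)^p \pmod{p}$. The normalization $F'(0) \equiv 1 \pmod{p}$ is then arranged by an obvious rescaling of $y$ that preserves the identification $W_A(p) \simeq A_{\mathbb{Q}_{p^2}}(p^{-1},1)$.

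The main obstacle is producing $y$ and $\kappa$ so that $w_1^\ast y = \kappa/y$ holds \emph{exactly}, not merely to high order in $p$. My approach would be a successive approximation argument: begin with $y_0 := X$ valid on the inner half, so $y_0 \cdot w_1^\ast y_0 = XY$ equals $pc_0$ plus an explicit error term supplied by $pf_1(X,Y)$; adjust $y_0$ by a correction of smaller order, updating $\kappa$ correspondingly, and iterate. The valuation estimates in Lemma \ref{22} bound the size of the correction at each step, guaranteeing that the sequence converges $p$-adically to a parameter satisfying the desired functional equation. This is essentially the computation in \cite[Section~0]{dSh}, from which the claimed shape of $F$ and $G$ and the congruence $G \equiv F^p \pmod p$ follow at once.
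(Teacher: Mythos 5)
The paper does not supply a proof of this theorem; it is cited verbatim from Coleman--McMurdy \cite[Theorem 3.5]{CM}, who refer back to de Shalit \cite{dSh}, so there is nothing in the paper against which to compare your plan. Your outline is consistent in spirit with that source in its reliance on the Kronecker polynomial, but it contains a genuine gap at the key step, namely producing the parameter $y$ with $w_1^\ast y = \kappa/y$ holding identically.

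The successive-approximation scheme you describe does not converge as stated. You start from $y_0 := X$ and assert that $y_0 \cdot w_1^\ast y_0 = XY$ equals $pc_0$ plus a small correction coming from $pf_1(X,Y)$. But the Kronecker relation gives $XY = pc_0 + \bigl(X^{p+1} + Y^{p+1} - X^pY^p + pg(X,Y)\bigr)$, and the bracketed term is small relative to $pc_0$ \emph{only near the self-dual circle}; that is precisely the content of Corollary \ref{key}.2, which is a statement about $v(X)=v(Y)=1/2$. Near the inner end of the annulus, where $v(X)=q<\tfrac{1}{p+1}$, Lemma \ref{22} gives $v(Y)=pq$, hence $v\bigl(X^{p+1}\bigr)=(p+1)q<1=v(pc_0)$: the ``error'' dominates $pc_0$, and $XY$ has valuation far below $1$. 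Since the relation $y\cdot w_1^\ast y=\kappa$ must hold identically on the whole annulus with $\kappa$ a \emph{constant} of valuation one, while $XY$ has valuation ranging from near $0$ at the ends to $1$ in the middle, the sought-after $y$ is not a small (additive or multiplicative) perturbation of $X$, and your iteration has no mechanism to converge. You also leave $\kappa$ undetermined beyond ``updating correspondingly.'' Producing the distinguished parameter $y$ is the real content of de Shalit's theorem; the Kronecker polynomial alone only encodes the algebraic relation between $X=\pi_f^\ast s$ and $Y=\pi_v^\ast s$ (which is exactly how the paper uses it, in Lemma \ref{key2} and Corollary \ref{key3}) and does not by itself single out $y$. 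Your argument needs an actual construction of $y$ --- in de Shalit this comes from deformation theory of the supersingular formal group rather than from a perturbative gluing of $X$ and $\kappa/Y$ --- before the congruences for $F$ and $G$ can be extracted.
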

As mentioned in \cite[the proof of Theorem 3.5]{CM},
our $\kappa$ and $\pi_f^\ast s$ are equal to $\pi$ and $\psi(y)-\beta_0$
respectively in de Shalit's language in \cite[Section 3]{dSh}.
We have $\beta_0 \in \mathbb{Z}_{p^2}^{\ast}$
and it is a lifting of $j(A) \in \mathbb{F}_p$
by loc.\ cit.
\begin{lemma}\label{key2}
Let the notation be as in Theorem \ref{thm}.
Then, the space $W_A(p)$ is embedded into a product 
$(s_1,s_2) \in W_A(1) \times W_A(1)$
by a map $(\pi_f,\pi_v)$ and
its image is defined by 
the following equation
$F_p^{\beta_0}(s_1,s_2)=0.$ 
\end{lemma}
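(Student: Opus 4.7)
The plan is to interpret $(\pi_f,\pi_v)$ moduli-theoretically and then invoke the defining property of Kronecker's polynomial. By Definition \ref{low}, $\pi_f(E,C)=(E,pC)=E$ (since $C$ has order $p$) and $\pi_v(E,C)=E/C$, so in terms of $j$-invariants the map $(\pi_f,\pi_v):W_A(p) \to W_A(1)\times W_A(1)$ is simply $(E,C)\mapsto(j(E),j(E/C))$. The polynomial $F_p$ recalled in subsection 2.1 is characterized by the identity $F_p(j(E),j(E/C))=0$ for every cyclic $p$-isogeny $E\to E/C$; hence the image of $W_A(p)$ lies in the zero locus of $F_p$ on $W_A(1)\times W_A(1)$.

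Next I would translate from $j$ to the parameter $s$ on $W_A(1)$ of Theorem \ref{thm}. According to the remark following that theorem, $\pi_f^\ast s=\psi(y)-\beta_0$ in de Shalit's notation, where $\psi(y)$ is the $j$-invariant expressed as a function of $y$; equivalently, $s=j-\beta_0$ on $W_A(1)$, and the same identification holds on the $\pi_v$-factor. Substituting $j(E)=s_1+\beta_0$ and $j(E/C)=s_2+\beta_0$ into the relation $F_p(j(E),j(E/C))=0$ gives exactly $F_p^{\beta_0}(s_1,s_2)=0$ on the image, which is the desired equation.

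Finally, to show that $(\pi_f,\pi_v)|_{W_A(p)}$ really is an embedding whose image equals the zero locus, I would argue by a degree count over $W_A(1)$ via the first projection. Theorem \ref{thm} together with the expansion $\pi_f^\ast s=F(y)+G(\kappa/y)$, in which $F(T)\in T\mathbb{Z}_{p^2}[[T]]$ with $F'(0)\equiv 1\ (\mathrm{mod}\ p)$ and $G(T)\equiv F(T)^p\ (\mathrm{mod}\ p)$, shows that $\pi_f:W_A(p)\to W_A(1)$ has degree $p+1$, and on the other side $F_p(X,Y)$ is of degree $p+1$ in $Y$, so the zero locus is also a $(p+1)$-fold cover of $W_A(1)$. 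Since $X_0(p)\to\{F_p=0\}$ is globally birational (Kronecker's polynomial gives a plane model), these two $(p+1)$-fold covers of $W_A(1)$ are identified by $(\pi_f,\pi_v)$. The main technical subtlety I anticipate is verifying injectivity on the supersingular region: the global plane model $\{F_p=0\}$ has isolated double points corresponding to CM pairs with $E\simeq E/C$, and one must check that these do not obstruct $(\pi_f,\pi_v)|_{W_A(p)}$ from being a closed immersion, which should follow directly from the explicit parametrization of $W_A(p)$ supplied by Theorem \ref{thm}.
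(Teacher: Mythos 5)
Your proof is correct, and its core coincides with the paper's: both rest on de Shalit's parametrization of $W_A(p)$ from Theorem \ref{thm} together with the modular (Kronecker) relation $F_p\bigl(j(E),j(E/C)\bigr)=0$. The paper reaches this by writing $\pi_v=\pi_f\circ w_1$, using $w_1^*y=\kappa/y$ from Theorem \ref{thm}.1, and then quoting de Shalit's identity $F_p(\psi(y),\psi(\kappa/y))=0$; you instead unwind the same fact directly from the moduli description $(\pi_f,\pi_v)(E,C)=(j(E),j(E/C))$ together with the identification $s=j-\beta_0$ coming from the remark after Theorem \ref{thm}. These are the same computation in two dialects, since $\psi(y)$ is exactly the $j$-invariant pulled back along the $y$-coordinate. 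Where you go beyond the paper is your third paragraph: the paper's one-line proof only exhibits containment of the image in $\{F_p^{\beta_0}=0\}$, while you add a degree-$(p+1)$ count on both sides (using that all $p+1$ cyclic subgroups of a supersingular $E$ give $E/C$ with the same supersingular reduction, so the zero locus is also a $(p+1)$-sheeted cover of $W_A(1)$ via $s_1$) and then invoke birationality of the plane model. That extra step is sound as far as it goes; the caveat you flag about CM nodes of the plane model is a genuine point of care, but it is addressed implicitly by the explicit annulus parametrization of $W_A(p)$ in Theorem \ref{thm} and is checked circle-by-circle in Corollary \ref{key3}, so flagging and deferring it is fine.
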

\begin{proof}
We consider the embedding
$$(\pi_f,\pi_v):W_A(p) \hookrightarrow W_A(1) \times W_A(1).$$
Since we have $\pi_v=\pi_f \circ w_1,$ we acquire 
$\pi_f^\ast (s_1)=\psi(y)-\beta_0$
and $\pi_v^\ast (s_2)=\psi(\kappa/y)-\beta_0$ by Theorem \ref{thm}.1.
Thereby, we obtain $F_p^{\beta_0}(\pi_f^\ast (s_1),\pi_v^\ast (s_2))=
F_p(\psi(y),\psi(\kappa/y))=0$ by \cite[\textbf{3.3}]{dSh}.
Hence, the assertion follows.
\end{proof}
For a rational number $q \in (0,1),$
let $\mathbf{C}^{A,0}_q$ denote a circle
$C[p^{-i(A)q}] \subset W_A(1)$
via the parameter $s$ chosen as in Theorem \ref{thm}.
Similarly,
let $\mathbf{C}^{A,0}_{\geq q}$
denote a closed disk
$B_{\mathbb{Q}_{p^2}}[p^{-i(A)q}] \subset W_A(1).$
\begin{corollary}\label{key3}
Let the notation be as in Lemma \ref{key2}.
Let $q \in (0,1)$ be a rational number.
We have the followings
\\1. If we assume $q < 1/(p+1),$
the circle $\mathbf{C}^{A}_{q}$
is embedded into the following product 
$\mathbf{C}^{A,0}_{q} \times \mathbf{C}^{A,0}_{pq}$
by the map $(\pi_f,\pi_v)$ and its image is determined by $F_p^{\beta_0}(s_1,s_2)=0$
where $(s_1,s_2)$ denotes the coordinate of $\mathbf{C}^{A,0}_{q} \times \mathbf{C}^{A,0}_{pq}.$
\\2. If we assume $q=1/(p+1),$
the circle $\mathbf{C}^{A}_{1/(p+1)}$
is embedded into the following product 
$\mathbf{C}^{A,0}_{1/(p+1)} \times \mathbf{C}^{A,0}_{\geq p/(p+1)}$
by the map $(\pi_f,\pi_v)$ and its image is determined by $F_p^{\beta_0}(s_1,s_2)=0.$
\\3. If we assume $1/(p+1) < q < p/(p+1),$
the circle $\mathbf{C}^{A}_{q}$
is embedded into the following product 
$\mathbf{C}^{A,0}_{q} \times \mathbf{C}^{A,0}_{1-q}$
by the map $(\pi_f,\pi_v)$ and its image is determined by $F_p^{\beta_0}(s_1,s_2)=0.$
\\4. If we assume $q=p/(p+1),$
the circle $\mathbf{C}^{A}_{p/(p+1)}$
is embedded into the following product 
$\mathbf{C}^{A,0}_{\geq p/(p+1)} \times \mathbf{C}^{A,0}_{1/(p+1)}$
by the map $(\pi_f,\pi_v)$ and its image is determined by $F_p^{\beta_0}(s_1,s_2)=0.$
\\5. If we assume $q > p/(p+1),$
the circle $\mathbf{C}^{A}_{q}$
is embedded into the following product 
$\mathbf{C}^{A,0}_{p(1-q)} \times \mathbf{C}^{A,0}_{1-q}$
by the map $(\pi_f,\pi_v)$ and its image is determined by $F_p^{\beta_0}(s_1,s_2)=0.$
\end{corollary}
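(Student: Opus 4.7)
The plan is to treat all five cases uniformly via Lemma \ref{key2} and the Katz--Lubin theorem (Theorem \ref{Ka}). The defining equation $F_p^{\beta_0}(s_1,s_2)=0$ comes essentially for free from Lemma \ref{key2}: the image of $(\pi_f,\pi_v)$ restricted to any subspace of $W_A(p)$ is cut out by the same polynomial, so only the identification of the target circles (or closed disks) requires genuine work.

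First I would make explicit the dictionary between the parameter $y$ on $W_A(p)$, the parameter $s$ on $W_A(1)$, and Hasse invariants: since $j(A) \ne 0, 1728$ we have $i(A)=1$, so $v(y(E,C))$ equals $h(E)$ when $C = K_1(E)$ and equals $1 - h(E/C)$ otherwise, while $v(s(E')) = h(E')$ on $W_A(1)$. Given $(E,C) \in \mathbf{C}^A_q$, the first task is to decide whether $C$ is canonical. A quick case analysis using parts 5 and 6 of Theorem \ref{Ka} shows that $C = K_1(E)$ and $h(E) = q$ whenever $q < p/(p+1)$, while for $q > p/(p+1)$ one must have $C \ne K_1(E)$, $h(E/C) = 1-q$, and hence $h(E) = p(1-q)$ by Theorem \ref{Ka}.5. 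The boundary value $q = p/(p+1)$ is exactly $\mathbf{TS}_A$: there $E$ is too supersingular and $h(E/C) = 1/(p+1)$ by definition.

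Given this dictionary, each of the five cases reduces to reading off $v(s_1) = h(\pi_f(E,C)) = h(E)$ and $v(s_2) = h(\pi_v(E,C)) = h(E/C)$ and invoking the appropriate part of Theorem \ref{Ka}. Case 1 uses part 2 to obtain $h(E/C) = pq$; case 2 uses part 3 to conclude $E/C$ is too supersingular, which forces $v(s_2) \ge p/(p+1)$; case 3 uses part 4 to obtain $h(E/C) = 1-q$; case 4 is just the moduli-theoretic description of $\mathbf{TS}_A$; and case 5 combines $h(E/C) = 1-q$ with Theorem \ref{Ka}.5 to give $h(E) = p(1-q)$. In every case these valuations place $(s_1,s_2)$ in the claimed product of circles (or closed disks), and the image inside that product is then cut out by the equation $F_p^{\beta_0}(s_1,s_2) = 0$ provided by Lemma \ref{key2}.

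I do not anticipate any serious obstacle; the only point requiring attention is the bookkeeping at the boundary values $q = 1/(p+1)$ and $q = p/(p+1)$, where one of the two factors becomes a closed disk $\mathbf{C}^{A,0}_{\ge p/(p+1)}$ rather than a single circle. This reflects the fact that on the too-supersingular side the Hasse invariant is only known to lie in the interval $[p/(p+1),1]$, so Theorem \ref{Ka} yields an inequality rather than an equality. Everything else is a direct reading of Theorem \ref{Ka} through the moduli interpretation of the parameters $y$ and $s$.
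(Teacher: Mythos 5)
Your proof is correct but takes a genuinely different route from the paper's. The paper argues directly from de Shalit's explicit series in Theorem \ref{thm}: one writes $\pi_f^\ast(s_1) = F(y) + G(\kappa/y)$ and $\pi_v^\ast(s_2) = F(\kappa/y) + G(y)$, and then reads off $v(s_1)$ and $v(s_2)$ from $v(y)=q$ using the congruences $F(T)\equiv T$ and $G(T)\equiv T^p \pmod p$. This makes no appeal to Theorem \ref{Ka}. Your argument instead passes through the moduli interpretation of the parameters $y$ and $s$ and applies the Katz--Lubin theorem to turn $h(E)$ into $h(E/C)$. That is conceptually transparent, but there are two points to flag. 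First, you assert $v(s(E')) = h(E')$ on $W_A(1)$ without justification; this is true (it comes from the lift of the Hasse invariant having a simple zero at $j(A)$ when $j(A)\neq 0,1728$), but the paper does not establish it before the corollary --- the analogous statement appears only afterward as Remark \ref{rem.}, and is phrased as a consequence of the corollary rather than as an input. Second, the remark immediately following Theorem \ref{Ka} says that Corollary \ref{key3} and Lemma \ref{22} together let one \emph{recover} parts of Theorem \ref{Ka}; the author evidently wants the corollary to rest on de Shalit's theorem alone, and your proof reverses that dependence. There is no actual circularity, since Theorem \ref{Ka} is an external result cited from Buzzard and Katz, but you should supply a justification for the $v(s)=h$ dictionary rather than taking it as free, and you should be aware that you lose the paper's ``recovery'' remark by going this way.
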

\begin{proof}
We prove 1.
Since we have
$v(y)=q < 1/(p+1),$
we obtain
$v(F(y)+G(\kappa/y))=v(y)=q$ and
$v(F(\kappa/y)+G(y))=v(y^p)=pq$
by Theorem \ref{thm}.2.
Hence the assertion follows from
$\pi_f^\ast (s_1)
=F(y)+G(\kappa/y)$ and $\pi_v^\ast (s_2)=F(\kappa/y)+G(y)$
by Theorem \ref{thm}.
Other required assertions follow in a similar way.
We omit their proofs. 
\end{proof}
\begin{remark}\label{rem.}
Let the notation be as in Corollary \ref{key3}.
We fix a point $y=(E,C) \in \mathbf{C}^{A}_{q}$ for an arbitrary rational number $q \in (0,1).$
We consider the embedding
$(\pi_f,\pi_v):\mathbf{C}^{A}_{q} \hookrightarrow W_A(1) \times W_A(1)$
as in Corollary \ref{key3}. We denote by $(s_1,s_2)$
the coordinate of $W_A(1) \times W_A(1).$
Then, the valuations $v(s_1)$ and $v(s_2)$ are equal to 
the Hasse invariants
$h(E)$ and $h(E/C)$ respectively.
\end{remark}

\subsection{Subspaces of $X_0(p^n)$}
Let $A/\mathbb{F}_p$ be a supersingular elliptic curve.
We define subspaces $\mathbf{Y}^A_{a,b},\mathbf{Z}^A_{a,b}$ 
of $W_A(p^n)$ on the basis of ideas in \cite[Section 5]{Mc}, 
whose reductions are expected to play a key role in the stable model of the modular curve $X_0(p^n).$
In fact, the reductions of the spaces $\mathbf{Y}^A_{a,b},\mathbf{Z}^A_{a,b}$
 appear in the stable reductions of $X_0(p^n)$
for $n=2,3,4.$
In Corollary \ref{prod}, we introduce some embeddings
of the spaces $\mathbf{Y}^A_{a,b},\mathbf{Z}^A_{a,b}$
into products of $W_A(1)$ and determine its images.
Using these identifications,
we will calculate the reductions of the spaces later in cases $n=2,3,4.$

Let the notation be as in the previous subsection.
Let $n$ be a positive integer.
Let $a,b \geq 1$ be positive integers.
We define
$$\mathbf{Y}^A_{a,b}:=\pi^{-1}_{a,b-1}(\mathbf{TS}_A) \subset W_A(p^n)$$
with $a+b=n, n \geq 2$ and 
$$\mathbf{Z}^A_{a,b}:=\pi_{a,b}^{-1}(\mathbf{SD}_A) \subset W_A(p^n)$$
with $a+b=n-1, n \geq 3.$
In lemma below, we give moduli-theoretic descriptions of these spaces.
\begin{lemma}\label{le}
Let the notation be above.
\\1. We have the following equality
$$\mathbf{Y}^A_{a,b}=\{(E,C) \in W_A(p^n)|h(E)=\frac{1}{p^{a-1}(p+1)}, E/p^bC:{\rm too}\  {\rm supersingular},
 K_a(E)=p^bC\}$$ with $a+b=n \geq 2.$ 
For a point $(E,C) \in \mathbf{Y}^A_{a,b},$ we have the followings
\\(a). For $1 \leq i \leq a,$ the subgroup $p^{b+i-1}C/p^{b+i}C \subset E/p^{b+i}C$
is the canonical subgroup and we have $h(E/p^{b+i}C)=1/p^{i-1}(p+1).$
\\(b). For $0 \leq i \leq b-1,$
the subgroup $p^{b-i-1}C/p^{b-i}C \subset E/p^{b-i}C$
is non-canonical.
We have $h(E/p^bC) \geq p/(p+1).$
For $1 \leq i \leq b,$ we have $h(E/p^{b-i}C)=1/p^{i-1}(p+1).$
\\2. We have the following equality
$$\mathbf{Z}^A_{a,b}=
\{(E,C) \in W_A(p^n)|h(E)=1/2p^a, |K(E) \cap C|=p^{a+1}\}$$
with $a+b=n-1 \geq 2.$
For a point $(E,C) \in \mathbf{Z}^A_{a,b},$ we have the followings
\\(a). For $1 \leq i \leq a+1,$
the subgroup $p^{b+i-1}C/p^{b+i}C \subset E/p^{b+i}C$
is the canonical subgroup and we have 
$h(E/p^{b+i}C)=1/2p^{i-1}.$
\\(b). For $0 \leq i \leq b-1,$ the subgroup
$p^{b-i-1}C/p^{b-i}C \subset E/p^{b-i}C$
is non-canonical.
For $0 \leq i \leq b,$
we have $h(E/p^{b-i}C)=1/2p^i.$
\end{lemma}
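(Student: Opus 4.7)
The plan is to unpack the definitions of $\pi_{a,b-1}$ and $\pi_{a,b}$ in moduli-theoretic terms and then propagate the resulting boundary conditions along the chain of cyclic $p$-isogenies
\[
E = E/p^nC \to E/p^{n-1}C \to \cdots \to E/pC \to E/C
\]
by iterating Theorem \ref{Ka}. A straightforward induction using $\pi_f(E,C) = (E,pC)$, $\pi_v(E,C) = (E/C[p], C/C[p])$, and the observation that the $p^j$-torsion of a cyclic subgroup of order $p^k$ containing $C$ is $p^{k-j}C$, yields
\[
\pi_{a,b-1}(E,C) = (E/p^bC,\; p^{b-1}C/p^bC), \qquad \pi_{a,b}(E,C) = (E/p^{b+1}C,\; p^bC/p^{b+1}C).
\]
Using the descriptions of $\mathbf{TS}_A$ and $\mathbf{SD}_A$ recalled earlier in the section, the condition $\pi_{a,b-1}(E,C) \in \mathbf{TS}_A$ reduces to ``$E/p^bC$ is too supersingular with $\overline{E/p^bC} \simeq A$'' (the condition $h(E/p^{b-1}C) = 1/(p+1)$ in the definition of $\mathbf{TS}_A$ being then automatic by Theorem \ref{Ka}(6)), while $\pi_{a,b}(E,C) \in \mathbf{SD}_A$ reduces to ``$h(E/p^{b+1}C) = 1/2$, $K_1(E/p^{b+1}C) = p^bC/p^{b+1}C$, and $\overline{E/p^{b+1}C} \simeq A$''.

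For part 1, I would propagate from $E/p^bC$ in both directions. Ascending toward $E$: since $E/p^bC$ is too supersingular, an inspection of the five cases of Theorem \ref{Ka} applied to the $p$-isogeny $E/p^{b+1}C \to E/p^bC$ rules out every possibility except $h(E/p^{b+1}C) = 1/(p+1)$ with $p^bC/p^{b+1}C$ canonical. Iterating along the chain, using Theorem \ref{Ka}(2) together with the intrinsic nature of the canonical subgroup (which forces consistency at each subsequent step), gives $h(E/p^{b+i}C) = 1/(p^{i-1}(p+1))$ and $K_i(E/p^{b+i}C) = p^bC/p^{b+i}C$ for $1 \leq i \leq a$, so $h(E) = 1/(p^{a-1}(p+1))$ and $K_a(E) = p^bC$ by the inductive definition of the canonical subgroup of order $p^a$. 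Descending toward $E/C$: at each step the canonical subgroup of $E/p^{b-i}C$ is identified via Theorem \ref{Ka}(6) as the image of the $p$-torsion of the adjacent too-supersingular quotient, which differs from $p^{b-i-1}C/p^{b-i}C$ by an elementary element-order computation, so Theorem \ref{Ka}(5) applies and gives $h(E/p^{b-i}C) = 1/(p^{i-1}(p+1))$ for $1 \leq i \leq b$.

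The argument for part 2 is entirely parallel. Starting from $h(E/p^{b+1}C)=1/2$ with canonical kernel $p^bC/p^{b+1}C$, Theorem \ref{Ka}(4) gives $h(E/p^bC) = 1/2$. Iterating Theorem \ref{Ka}(5) (non-canonicity of each descending kernel justified as above) yields $h(E/p^{b-i}C) = 1/(2p^i)$, while iterating Theorem \ref{Ka}(2) ascending gives $h(E/p^{b+i}C) = 1/(2p^{i-1})$ for $1 \leq i \leq a+1$, so $h(E) = 1/(2p^a)$ and $K_{a+1}(E) = p^bC$. Finally, $|K(E) \cap C| = p^{a+1}$ follows from $K_{a+1}(E) = p^bC \subset C$ together with a direct preimage computation showing that any further canonical subgroup $K_m(E)$ (obtained via Theorem \ref{Ka}(4) once the chain enters the bridge region $h = 1/2$) does not enlarge $K_m(E) \cap C$ beyond $p^bC$.

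The main technical obstacle is the consistency argument that resolves, at each step of the propagation, the several alternative branches a priori permitted by the five cases of Theorem \ref{Ka}: the key observation is that the canonical subgroup of a given elliptic curve is intrinsic to it, so a determination at one step constrains all subsequent steps via the inductive construction of $K_n$. Once this is verified at a representative step, the remainder is a routine double induction along the isogeny chain.
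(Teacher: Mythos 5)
Your proof follows essentially the same approach as the paper: unpack $\pi_{a,b-1}$ and $\pi_{a,b}$ moduli-theoretically, then propagate from the constrained middle curve ($E/p^bC$ too supersingular, or $E/p^{b+1}C$ with $h=1/2$ and prescribed canonical subgroup) outward along the isogeny chain by iterating Theorem \ref{Ka}, using the intrinsicness of the canonical subgroup at each step to eliminate the spurious branches of the case analysis. One small inaccuracy in part 2: for $h(E)=1/2p^a$ the canonical subgroup $K_m(E)$ is simply \emph{undefined} for $m>a+1$ (since $h(E)\geq p^{2-m}/(p+1)$), so $K(E)=K_{a+1}(E)=p^bC$ directly, with no need for the ``does not enlarge $K_m(E)\cap C$'' argument you sketch via Theorem \ref{Ka}(4).
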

\begin{proof}
We prove the assertion $1$. We fix a point $(E,C) \in \mathbf{Y}^A_{a,b}.$
The first equality follows from (a),(b).
By the definition of $\mathbf{Y}^A_{a,b},$
$E/p^bC$ is a too supersingular elliptic curve,
hence
$p^{b-1}C/p^bC \subset E/p^bC$
is non-canonical and $h(E/p^bC) \geq p/(p+1).$
If we assume that $p^bC/p^{b+1}C \subset E/p^{b+1}C$
is non-canonical, by Theorem \ref{Ka}.5 and 6,
we have $h(E/p^bC) \leq 1/(p+1).$
This is a contradiction.
Thereby $p^bC/p^{b+1}C \subset E/p^{b+1}C$ is the canonical subgroup
and by Theorem \ref{Ka}.2, 3 and 4,
we have $h(E/p^{b+1}C)=1/(p+1).$
By repeating similar arguments using Theorem
\ref{Ka},
we acquire (a).
The assertion (b) is shown in the same way.
The assertion 2 is also shown in a very similar manner as above.
We omit the proof.
\end{proof}

We give analogous 
descriptions of $\mathbf{Y}^A_{1,1}$ and $\mathbf{Z}^A_{1,1}$
to the ones \cite[Lemma 5.1 and Proposition 7.1]{CM}.

\begin{lemma}\label{emb}
1. The restriction of the map 
$\pi_f \times \pi_v:W_A(p^2) \hookrightarrow W_A(p)^{\times 2}$
 to the subspace $\mathbf{Y}^A_{1,1}$ induces the following isomorphism
$$\iota:\mathbf{Y}^A_{1,1} \overset{\sim}{\longrightarrow} 
\{(x,y) \in \mathbf{C}^A_{\frac{1}{p+1}} \times \mathbf{C}^A_{\frac{p}{p+1}}|
\pi_v(x)=\pi_f(y), w_1(x) \neq y\}=:S_1$$
\\2. The restriction of the map 
$\pi_{2,0} \times \pi_{1,1} \times \pi_{0,2}:W_A(p^3) \hookrightarrow W_A(p)^{\times 3}$
 to the subspace $\mathbf{Z}^A_{1,1}$ induces the following isomorphism
$$i:\mathbf{Z}^A_{1,1} \overset{\sim}{\longrightarrow}
\{(x,y,z) \in \mathbf{C}^A_{\frac{1}{2p}} \times \mathbf{C}^A_{\frac{1}{2}}
\times \mathbf{C}^A_{1-\frac{1}{2p}}|\pi_v(x)=\pi_f(y),\pi_v(y)=\pi_f(z)\}=:T.$$
\end{lemma}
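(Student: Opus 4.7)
The plan is to prove both isomorphisms by the same two-step strategy: verify that the natural embedding lands in the claimed fiber product, and then construct an inverse using the moduli-theoretic description of $\mathbf{Y}^A_{1,1}$ and $\mathbf{Z}^A_{1,1}$ from Lemma \ref{le} combined with the parametrization recalled in subsection 2.2 (namely $v(x_A(E,K)) = i(A)h(E)$ when $K=K_1(E)$ and $v(x_A(E,K)) = i(A)(1-h(E/K))$ otherwise). The essential input for controlling canonical subgroups at each intermediate level is Theorem \ref{Ka}.

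For part (1), given $(E,C) \in \mathbf{Y}^A_{1,1}$, Lemma \ref{le}.1 supplies $h(E) = 1/(p+1)$, $K_1(E) = pC$, and $E/pC$ too-supersingular with $h(E/C) = 1/(p+1)$. The valuation formula then places $\pi_f(E,C)=(E,pC)$ on $\mathbf{C}^A_{1/(p+1)}$ and $\pi_v(E,C)=(E/pC,C/pC)$ on $\mathbf{TS}_A$, while the compatibility $\pi_v \pi_f = \pi_f \pi_v$ (both yielding $E/pC$ in $W_A(1)$) is formal. Moreover $w_1 \pi_f(E,C) = (E/pC,E[p]/pC) \ne (E/pC,C/pC) = \pi_v(E,C)$ since $E[p] \ne C$ (as $C$ is cyclic of order $p^2$). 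Conversely, given $(x,y)\in S_1$ with $x=(E_1,C_1)$, the value $v(x_A(x))=1/(p+1)$ together with Theorem \ref{Ka} forces $C_1 = K_1(E_1)$; identifying $E_y$ with $E_1/C_1$ via $\pi_v(x)=\pi_f(y)$, I define $C$ as the preimage of $C_y$ under $E_1 \to E_1/C_1$. Then $|C|=p^2$, and $C$ is cyclic iff $C \ne E_1[p]$ iff $C_y \ne E_1[p]/C_1$ iff $y \ne w_1(x)$, precisely the non-degeneracy condition built into $S_1$. Lemma \ref{le}.1 then confirms $(E_1,C) \in \mathbf{Y}^A_{1,1}$.

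For part (2), the same pattern applies with two successive reconstructions. Given $(x,y,z) \in T$, set $E = E_x$, let $D$ be the preimage of $C_y$ under $E \to E/C_x = E_y$, and $C$ the preimage of $C_z$ under $E \to E/D = E_z$; then $|D|=p^2$, $|C|=p^3$, and $D \subset C$. Cyclicity of $D$ is equivalent to $C_y \ne E[p]/C_x = C_{w_1(x)}$, and since $v(w_1(x)) = 1 - 1/(2p) \ne 1/2 = v(y)$ this holds automatically. Cyclicity of $C$ is equivalent to $E[p] \not\subset C$, equivalently to $C_z$ differing from the image of $E[p]/C_x$ in $E_z$ along the quotient $E_y \to E_y/C_y = E_z$. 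A direct chase using Theorem \ref{Ka} (with $K_1(E)=C_x$, $K_1(E_y)=C_y$, and part 4 of the theorem for the step $E_y \to E_z$) identifies this image with $K_1(E_z)$, whence the condition becomes $v(z) \ne h(E_z) = 1/2$, which holds since $v(z) = 1 - 1/(2p)$. Lemma \ref{le}.2 then verifies $(E,C) \in \mathbf{Z}^A_{1,1}$.

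The main obstacle is the final identification in part (2): recognizing the image of $E[p]/C_x$ inside $E_z$ as the canonical subgroup $K_1(E_z)$. This is handled by tracking the canonical-subgroup tower through both levels $E \to E_y$ and $E_y \to E_z$ via Theorem \ref{Ka}, after which the automatic non-degeneracy coming from the separation $v(z) \ne 1/2$ yields the required cyclicity, so that no extra non-degeneracy condition beyond the compatibilities $\pi_v(x)=\pi_f(y)$ and $\pi_v(y)=\pi_f(z)$ is needed in the definition of $T$.
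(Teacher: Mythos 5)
Your proof is correct and its overall shape (embed, then build an inverse moduli-theoretically, checking consistency against Lemma \ref{le} and Theorem \ref{Ka}) matches the paper's. The difference lies in how cyclicity of the reconstructed subgroup is established in part (2). The paper sets $C$ to be the kernel of the full three-step composite $E\to E_3/C_3$, observes $p^2C\subset K_1(E)\subset pC\subset K_2(E)\subset C$, and eliminates the degenerate case $p^2C=0$ by a two-stage subcase analysis ($pC=E[p]$ is impossible since $E[p]$ is not cyclic; $pC=K_1(E)$ is ruled out by constructing an auxiliary isomorphism and invoking Theorem \ref{Ka}.4). You instead build $D$ and $C$ stepwise as preimages, reduce cyclicity of $C$ to the statement $E[p]\not\subset C$, identify the image of $E[p]/C_x$ in $E_z=E_y/K_1(E_y)$ with $K_1(E_z)$ via Theorem \ref{Ka}.4, and observe that $C_z=K_1(E_z)$ is incompatible with $v(x_A(z))=1-\tfrac{1}{2p}$, which places $z$ on a circle where the level structure is automatically non-canonical. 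Your route is more streamlined: it replaces the process-of-elimination with a single identification plus a valuation observation, and it makes transparent why no extra non-degeneracy condition is needed in $T$ (whereas $S_1$ genuinely requires $w_1(x)\neq y$). Both arguments rest on the same inputs, but your organization of the cyclicity check is cleaner.
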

\begin{proof}First, we prove the assertion $1.$
The well-definedness of the map $\iota$ follows from Lemma \ref{le}.
For an element $(x,y)=((E_1,C_1),(E_2,C_2)) \in S_1,$
the subgroup $C_1 \subset E_1$ is the canonical subgroup, and $C_2 \subset E_2$
is not the canonical subgroup by Theorem \ref{Ka}.
Further, we have $h(E_1)=h(E_2/C_2)=1/(p+1)$ and $E_2$ is a too supersingular elliptic curve
by Theorem \ref{Ka}.
It suffices to show the surjectivity of the map $\iota$. 
We fix an element $(x,y)=((E_1,C_1),(E_2,C_2)) \in S_1.$
Then, we consider the composite
$$\psi:E_1 \longrightarrow E_1/C_1 \simeq E_2 \longrightarrow E_2/C_2.$$
We put $C:={\rm Ker}\ \psi.$ We simply write $E$ for $E_1.$
As mentioned above, we have $C_1=K_1(E).$
Then, we acquire the following sequence
$$E \longrightarrow E/K_1(E) \simeq E_2 \longrightarrow E/C \simeq E_2/C_2.$$
Since we have $p(C/K_1(E))=0$ in $E/K_1(E)$, we acquire an inclusion $pC \subset K_1(E) \subset C.$
By the inclusion, either $pC=0$ or $pC=K_1(E)$ can happen.
In the following, we show that the case $pC=0$ does not occur.
Assuming $pC=0,$ we deduce a contradiction.
We have $C \subset E[p]$ and hence $C=E[p],$ since $C$ is a subgroup of order $p^2.$
Therefore, we obtain $y=(E_2,C_2) \simeq (E/K_1,E[p]/K_1)=w_1(x).$
However, this is a contradiction, by the assumption $w_1(x) \neq y.$
Hence, we obtain $pC=K_1(E).$ By $h(E)=1/(p+1),$
we acquire that $E/pC=E/K_1$ is a too supersingular elliptic curve by Theorem \ref{Ka}.3. 
Thereby, we obtain $(E,C) \in \mathbf{Y}^A_{1,1}$
and $\iota(E,C)=(x,y).$ Hence, the assertion follows.

We prove the assertion $2.$ The well-definedness of $i$
 follows from Lemma \ref{le}
as in the proof of $1.$
It suffices to show the surjectivity of the map $i.$
Let $(x,y,z)=((E_1,C_1),(E_2,C_2),(E_3,C_3)) \in T.$
We simply write $E$ for $E_1.$
Note that $C_1=K_1(E), C_2=K_1(E_2)$ and the subgroup $C_3 \subset E_3$
is non-canonical. By Theorem \ref{Ka}, we have $h(E)=1/2p,h(E/K_1(E))=h(E/K_2(E))=1/2.$
We consider the following sequence
$$\beta:E \longrightarrow E/C_1 \simeq E_2 \longrightarrow E_2/C_2 \simeq E_3 \longrightarrow E_3/C_3$$
and put $C:={\rm Ker}\ \beta.$
Since the subgroup $C_3 \simeq C/K_2(E) \subset E_3 \simeq E/K_2(E)$ 
is a cyclic subgroup of order $p,$
we acquire $pC \subset K_2(E) \subset C.$
Hence, we obtain $p^2C \subset K_1(E).$
Thereby, either $p^2C=0$ or $p^2C=K_1(E)$ can happen.

In the following, we show that the case $p^2C=0$ can not happen.
Assuming  $p^2C=0,$ we deduce a contradiction.
We have $K_1(E) \subset pC \subset E[p].$
Either $pC=K_1(E)$ or $pC=E[p]$ can occur.
First, we deduce a contradiction, assuming $pC=E[p].$
Since we have $pC \subset K_2(E)$ and $pC$ is a subgroup of order $p^2,$
we acquire $pC=K_2(E)$ and hence $E[p]=K_2(E).$
But, this is a contradiction.
Second, we deduce a contradiction, assuming $pC=K_1(E).$
Let $\alpha:(E/K_1(E))/(K_2(E)/K_1(E)) \overset{\sim}{\longrightarrow} E/K_2(E)$
be the canonical isomorphism.
Since we have $h(E/K_1(E))=1/2,$
by Theorem \ref{Ka}, 
$(E/K_1(E))[p]/(K_2(E)/K_1(E)) \subset (E/K_1(E))/(K_2(E)/K_1(E))$
is the canonical subgroup.
By the assumption $pC=K_1(E),$
we have $\alpha^{-1}(C/K_2(E))=(E/K_1(E))[p]/(K_2(E)/K_1(E)).$
Since the subgroup $C/K_2(E)$ is non-canonical in $E/K_2(E)$, this is a contradiction.
Hence, we have proved that the case $p^2C=0$ can not happen.
Therefore, we obtain $p^2C=K_1(E).$ By the inclusion 
$p^2C=K_1(E) \subset pC \subset K_2(E),$
we acquire $pC=K_2(E).$
Thereby,  $(E,C) \in \mathbf{Z}^A_{1,1}$ and $i(E,C)=(x,y,z)$ hold.
Hence, the assertion follows.
\end{proof}

\begin{remark} Let the notation be as in Lemma \ref{emb}.
\\1. We clarify a relationship between our embeddings of 
$\mathbf{Y}^A_{1,1}, \mathbf{Z}^A_{1,1}$ in Lemma \ref{emb}
and the ones which were
considered in \cite[Lemma 5.1 and Proposition 7.1]{CM}.
Coleman-McMurdy considered the following isomorphism in \cite[Lemma 5.1]{CM}
$$\iota':\mathbf{Y}^A_{1,1} \overset{\sim}{\longrightarrow}
\{(x,y) \in \mathbf{TS}_A \times \mathbf{TS}_A|\pi_f(x)=\pi_f(y), x \neq y\}=:S_2$$
which sends $(E,C)$ to
$((E/pC,E[p]/pC),(E/pC,C/pC)).$
The Atkin-Lehner involution induces 
a well-defined map $w_1 \times 1:S_1 \longrightarrow S_2.$
Then, we can easily check that the following diagram is commutative
\[\xymatrix{
\mathbf{Y}^A_{1,1} \ar[r]^{\iota}_{\simeq}\ar[dr]^{\iota'}_{\simeq} & S_1 \ar[d]^{w_1 \times 1}_{\simeq} \\
 & S_2.  \\}
\]
\\2. We put $\mathbf{C}_A:=\mathbf{C}^A_{1-\frac{1}{2p}}$ as in \cite[Remark 3.4]{CM},
which they call the anti-Atkin-Lehner involution.
Then, we have a rigid analytic map $\tau_f:\mathbf{C}_A \longrightarrow \mathbf{SD}_A=\mathbf{C}^A_{\frac{1}{2}}$
which sends $(E,C')$ to $(E,K_1(E))$ as in \cite[Subsection 3.1]{CM}.
Coleman-McMurdy defines an isomorphism in \cite[Proposition 7.1]{CM}
$$i':\mathbf{Z}^A_{1,1} \overset{\sim}{\longrightarrow}
\{(x,y) \in \mathbf{C}_A \times \mathbf{C}_A|w_1 \circ \tau_f(x)=\tau_f(y)\}=:S$$
which sends $(E,C)$ to
$((E/p^2C,E[p]/p^2C),(E/pC,C/pC)).$
We define an isomorphism $\phi:S \longrightarrow
T$ by 
$$(x,y) \mapsto (w_1(x),\tau_f(x),y).$$
Then, the inverse map is described as follows
$$\psi:T \longrightarrow S; ((E_1,C_1),(E_2,C_2),(E_3,C_3)) \mapsto 
((E_2,D),(E_3,C_3))$$
where $D \subset E_2$ is a cyclic subgroup of order $p$
defined by the following diagram
\[\xymatrix{
E_1/C_1 \ar[r]^{\simeq} & E_2  \\
E_1[p]/C_1 \ar[u]^{\cup}\ar[r]^{\simeq} & D \ar[u]^{\cup}. \\}
\]
Then, we can easily check that the following diagram is commutative
\[\xymatrix{
\mathbf{Z}^A_{1,1} \ar[r]^{i}_{\simeq}\ar[dr]^{i'}_{\simeq} & T \ar[d]^{\psi}_{\simeq} \\
 & S.  \\}
\]
\end{remark}

\begin{lemma}\label{emb2}
Let $a, b$ be positive integers.
We have the followings
\\1. For $a \geq 2, a+b=n,$ the restriction of 
the map $\pi_{0,n-1} \times \pi_v:W_A(p^n) \hookrightarrow W_A(p) \times W_A(p^{n-1})$ 
 to the subspace $\mathbf{Y}^A_{a,b}$ induces the following isomorphism 
\begin{equation}\label{tt1}
\mathbf{Y}^A_{a,b}
\simeq \{(x,y) \in \mathbf{C}^A_{1/p^{a-1}(p+1)} \times \mathbf{Y}^A_{a-1,b}|\pi_v(x)=\pi_{0,n-1}(y)\}.
\end{equation}
\\2. For $b \geq 2, a+b=n,$ the restriction of
the map $\pi_f \times \pi_{n-1,0}:W_A(p^n) \hookrightarrow W_A(p^{n-1}) \times W_A(p)$
to the subspace $\mathbf{Y}^A_{a,b}$
induces the following isomorphism
$$\mathbf{Y}^A_{a,b}
\simeq \{(x,y) \in \mathbf{Y}^A_{a,b-1} \times \mathbf{C}^A_{1-(1/p^{b-1}(p+1))}|\pi_f(y)=\pi_{n-1,0}(x)\}.$$
\\3. For $a \geq 2, a+b=n-1,$ the restriction of 
the map $\pi_{0,n-1} \times \pi_v:W_A(p^n) \hookrightarrow W_A(p) \times W_A(p^{n-1})$
 to the subspace $\mathbf{Z}^A_{a,b}$ induces the following isomorphism
$$\mathbf{Z}^A_{a,b}
\simeq \{(x,y) \in \mathbf{C}^A_{1/2p^{a}} \times \mathbf{Z}^A_{a-1,b}|\pi_v(x)=\pi_{0,n-1}(y)\}.$$  
\\4. For $b \geq 2, a+b=n-1,$ the restriction of the map $\pi_f \times \pi_{n-1,0}:W_A(p^n) \hookrightarrow 
W_A(p^{n-1}) \times W_A(p)$ to the subspace $\mathbf{Z}^A_{a,b}$
induces the following isomorphism
$$\mathbf{Z}^A_{a,b}
\simeq \{(x,y) \in \mathbf{Z}^A_{a,b-1} \times \mathbf{C}^A_{1-(1/2p^{b})}|\pi_f(y)=\pi_{n-1,0}(x)\}.$$
\end{lemma}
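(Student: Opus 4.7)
The plan is to prove each of the four isomorphisms following the template of Lemma~\ref{emb}, since only the indexing and the choice of auxiliary maps change. I focus on part 1 and indicate how to adapt for the others.

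First I would check well-definedness. Given $(E,C) \in \mathbf{Y}^A_{a,b}$, Definition~\ref{low} gives $\pi_{0,n-1}(E,C) = (E, C[p])$ and $\pi_v(E,C) = (E/C[p], C/C[p])$. Because $a \geq 2$, we have $C[p] = p^{n-1}C \subset K_a(E) = p^bC$, and by the inductive definition of the canonical subgroup $C[p] = K_1(E)$. Combined with $h(E) = 1/p^{a-1}(p+1)$ from Lemma~\ref{le}.1 and the parameterization of $W_A(p)$ recalled in subsection 2.2, this places $(E, C[p])$ on $\mathbf{C}^A_{1/p^{a-1}(p+1)}$. Applying Theorem~\ref{Ka} and Lemma~\ref{le} to $(E/C[p], C/C[p])$ then gives Hasse invariant $1/p^{a-2}(p+1)$, canonical subgroup of order $p^{a-1}$ equal to $p^bC/C[p]$, and too-supersingular quotient, confirming membership in $\mathbf{Y}^A_{a-1,b}$. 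The compatibility $\pi_v(x) = \pi_{0,n-1}(y)$ follows immediately from the definitions.

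Next I would prove bijectivity. Injectivity is by reconstruction: knowing $(E, C[p])$ and $(E/C[p], C/C[p])$ recovers $E$, $C[p]$, and $C/C[p]$, and hence $C$ as the preimage of $C/C[p]$ under $E \to E/C[p]$. For surjectivity I would take $x = (E,D)$ on the circle and $y = (E'', C'') \in \mathbf{Y}^A_{a-1,b}$ with $\pi_v(x) = \pi_{0,n-1}(y)$; the compatibility gives an isomorphism $E/D \simeq E''$ matching $C''[p]$, so defining $C \subset E$ as the preimage of $C''$ under $E \to E/D$ produces an order-$p^n$ subgroup with $C[p] = D$. The defining conditions of $\mathbf{Y}^A_{a,b}$ for $(E,C)$ are then inherited from those for $x$ and $y$ via Theorem~\ref{Ka} and Lemma~\ref{le}. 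The degeneracy cases that had to be excluded in the proof of Lemma~\ref{emb} (such as $pC = 0$ or $p^2 C = 0$) do not arise here, because the compatibility condition directly fixes $C[p] = D$.

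Parts 2, 3, and 4 follow the same template. In parts 2 and 4 the second factor is $\pi_{n-1,0}(E,C) = (E/pC, C/pC)$, in which $C/pC$ is a non-canonical subgroup of order $p$; by the non-canonical clause of the parameterization of $W_A(p)$, the relevant circles have radii $1 - 1/p^{b-1}(p+1)$ and $1 - 1/2p^b$, where I would compute $h(E/C)$ from Lemma~\ref{le}.1.(b) and Lemma~\ref{le}.2.(b) respectively. Part 3 is parallel to part 1 with $\mathbf{Z}^A_{a,b}$ in place of $\mathbf{Y}^A_{a,b}$, now relying on Lemma~\ref{le}.2. The main obstacle in all four parts is tracking the chain of canonical/non-canonical subgroups and the Hasse invariants through repeated quotienting, but Lemma~\ref{le} and Theorem~\ref{Ka} package this bookkeeping so that once the moduli-theoretic descriptions are set up each verification reduces to routine checks.
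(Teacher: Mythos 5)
Your well-definedness and injectivity arguments are fine and match the paper's, but the surjectivity argument has a genuine gap at exactly the place where the paper does the most work. You define $C$ as the preimage of $C''$ under $E \to E/D$, and then assert that this ``produces an order-$p^n$ subgroup with $C[p]=D$'' and that the degeneracy cases ``do not arise here, because the compatibility condition directly fixes $C[p]=D$.'' This is not correct as stated. The compatibility condition only guarantees that $D \subset C$; it does not guarantee that $C$ is cyclic, i.e.\ that $C[p]$ (the full $p$-torsion of $C$) has order $p$ rather than $p^2$. A priori, the preimage of a cyclic group of order $p^{n-1}$ under a degree-$p$ isogeny can be either cyclic of order $p^n$ or isomorphic to $\mathbb{Z}/p^{n-1}\mathbb{Z}\times\mathbb{Z}/p\mathbb{Z}$. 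In the paper's notation this is the dichotomy $p^{n-1}C = K_1(E)$ versus $p^{n-1}C = 0$, and the second case must be \emph{ruled out}, not dismissed.

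The paper's proof spends about half its length on exactly this: assuming $p^{n-1}C=0$, it derives a contradiction by analyzing $p^{n-2}C$. Either $p^{n-2}C = E[p]$, which forces $E[p] = K_2(E)$ (contradiction), or $p^{n-2}C$ is a non-canonical order-$p$ subgroup contained in $K_2(E)$, which is incompatible with Theorem~\ref{Ka} since it would require a degree-$p$ isogeny $E/p^{n-2}C \to E/K_2(E)$ between curves whose Hasse invariants $h(E)/p$ and $p^2h(E)$ cannot be related that way. Note that the absence of an explicit exclusion condition in the statement of Lemma~\ref{emb2} (unlike the $w_1(x) \neq y$ clause in Lemma~\ref{emb}) is precisely because this non-cyclic case is provably impossible when $a \geq 2$; but that impossibility is a theorem requiring proof, not an automatic consequence of the setup. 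Your sketch needs to supply this ruling-out argument (or an equivalent one) for all four parts before it is complete.
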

\begin{proof}
We prove the assertion $1.$ The well-definedness of the map 
$(\pi_{0,n-1} \times \pi_{v})|_{\mathbf{Y}^A_{a,b}}$
follows from Lemma \ref{le}.
The injectivity of the map is trivial.
To prove $1,$ it suffices to show that the map is surjective.
Let $(x,y)=((E_1,C_1),(E_2,C_2))$ be an element of the right hand side of (\ref{tt1}).
By Theorem \ref{Ka} and Lemma \ref{le}, we know that $C_1 \subset E_1$ is the canonical subgroup of order $p.$
We simply write $E$ for $E_1.$
We consider the composite
$$\alpha:E \longrightarrow E/K_1(E) \simeq E_2 \longrightarrow E_2/C_2$$
and put $C:={\rm Ker}\ \alpha.$
Then, the subgroup $C/K_1(E) \subset E/K_1(E)$ is a cyclic subgroup of order $p^{n-1}.$
Hence, we obtain $p^{n-1}C \subset K_1(E).$
Since we have $(E_2,C_2) \simeq (E/K_1(E),C/K_1(E)) \in \mathbf{Y}^A_{a-1,b},$
we acquire $p^b(C/K_1(E))=K_a(E)/K_1(E).$ Thereby, we obtain $p^bC \subset K_a(E).$
By the inclusion $p^{n-1}C \subset K_1(E),$
either $p^{n-1}C=0$ or $p^{n-1}C=K_1(E)$ can happen.

In the following, we show that the case $p^{n-1}C=0$ can not happen.
Assuming $p^{n-1}C=0$, we deduce a contradiction.
The condition $p^{n-1}C=0$ implies $p^{n-2}C \subset E[p].$
By $p^{n-2}(C/K_1(E)) \neq 0$ in $E/K_1(E),$
we acquire $p^{n-2}C \neq 0$ and $p^{n-2}C \neq K_1(E).$
Hence, the subgroup $p^{n-2}C$ is a non-canonical subgroup of order $p$ or 
$p^{n-2}C$ is equal to $E[p].$ 

We show the second case $p^{n-2}C=E[p]$ does not happen.
By multiplying the inclusion $p^bC \subset K_a(E)$ by $p^{a-2},$
we acquire $p^{n-2}C \subset K_2(E).$
Since $p^{n-2}C$ is a subgroup of order $p^2,$ the equality $p^{n-2}C=K_2(E)$
must hold.
Therefore, we acquire $E[p]=K_2(E).$ But this is a contradiction.
Secondly, assuming that the subgroup $p^{n-2}C$ is a non-canonical subgroup of order $p,$ 
we deduce a contradiction.
By $p^{n-2}C \subset K_2(E),$ we have the following commutative diagram
\[\xymatrix{
E \ar[r]^{\!\!\!\!\!\!{\rm deg.} p^2}\ar[dr]_{{\rm deg.} p} & E/K_2(E)  \\
 & E/p^{n-2}C \ar[u]_{{\rm deg.} p}.  \\}
\]
By Theorem \ref{Ka}, we have $h(E/K_2(E))=p^2h(E)$ and $h(E/p^{n-2}C)=h(E)/p.$
The isogeny $E/p^{n-2}C \longrightarrow E/K_2(E)$ of degree $p$ does not exist again by Theorem \ref{Ka}.
Hence, this is a contradiction.

Now, we consider the case $p^{n-1}C=K_1(E).$ 
In this case, we can easily check that $(E,C)$ is in $\mathbf{Y}^A_{a,b}$
and $(E,C)$ goes to $(x,y)$ by the map 
$\pi_{0,n-1} \times \pi_v.$ We have proved the surjectivity of the map.
Hence, the assertion $1$ follows.
Other assertions 2,3,4 follow in the same way as the proof of $1$ above.
We omit the proofs.
\end{proof}

\begin{corollary}\label{emb3}
Let $a,b$ be positive integers.
\\1. The space $\mathbf{Y}^A_{a,b}$ with $a+b=n \geq 2$ 
is embedded into the following product of the subspaces of $W_A(p)$
by a map
$\prod_{0 \leq i \leq n-1}\pi_{i,n-i}$
$$(\{Y_i\}_{0 \leq i \leq n-1}) \in \prod_{0 \leq i \leq a-1}\mathbf{C}^{A}_{1/p^{a-i-1}(p+1)} \times 
\prod_{a \leq i \leq n-1}\mathbf{C}^{A}_{1-(1/p^{i-a}(p+1))}$$
and its image is determined by the following equations
$\{\pi_v(Y_i)=\pi_f(Y_{i+1})\}_{0 \leq i \leq n-2}$
and $w_1(Y_{a-1}) \neq Y_{a}.$
\\2. The space $\mathbf{Z}^A_{a,b}$ with $a+b=n-1 \geq 2$ is embedded into the following 
product of the subspaces of $W_A(p)$ by a map
$\prod_{0 \leq i \leq n-1}\pi_{i,n-i}$
$$(\{Y_i\}_{0 \leq i \leq n-1}) \in
\prod_{0 \leq i \leq a}\mathbf{C}^A_{1/2p^{a-i}} \times 
\prod_{a+1 \leq i \leq n-1}
\mathbf{C}^A_{1-(1/2p^{i-a})}$$
and its image is determined by the following equations
$\{\pi_v(Y_i)=\pi_f(Y_{i+1})\}_{0 \leq i \leq n-2}.$
\end{corollary}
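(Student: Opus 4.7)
The plan is to prove both statements by induction on $n$, using Lemma \ref{emb} as the base case and Lemma \ref{emb2} as the inductive step, which realizes each $\mathbf{Y}^A_{a,b}$ (resp.\ $\mathbf{Z}^A_{a,b}$) as a fiber product of a circle in $W_A(p)$ with a lower-level space of the same type.

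For part 1, I induct on $n = a + b \geq 2$. The base case $n = 2$ forces $a = b = 1$ and is exactly Lemma \ref{emb}.1: the two factors $\pi_f = \pi_{0,1}$ and $\pi_v = \pi_{1,0}$ give $Y_0 \in \mathbf{C}^A_{1/(p+1)}$ and $Y_1 \in \mathbf{C}^A_{p/(p+1)}$, matching the claimed radii at $i = 0, 1$, with compatibility $\pi_v(Y_0) = \pi_f(Y_1)$ and nondegeneracy $w_1(Y_0) \neq Y_1$. For the inductive step with $n \geq 3$, if $a \geq 2$, Lemma \ref{emb2}.1 realizes $\mathbf{Y}^A_{a,b}$ as the fiber product of $\mathbf{C}^A_{1/p^{a-1}(p+1)}$ and $\mathbf{Y}^A_{a-1,b}$ over $W_A(1)$ via $\pi_v$ and $\pi_{0,n-1}$; the former contributes the $Y_0$-entry at exactly the claimed radius, and applying the inductive hypothesis to $\mathbf{Y}^A_{a-1,b}$ supplies the remaining entries $Y_1, \ldots, Y_{n-1}$ with radii that line up under the shift $i \mapsto i-1$, $a \mapsto a-1$. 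The fiber product relation becomes $\pi_v(Y_0) = \pi_f(Y_1)$; all other compatibilities and the inequality $w_1(Y_{a-1}) \neq Y_a$ are inherited from induction. If instead $a = 1$ and $b \geq 2$, the symmetric argument using Lemma \ref{emb2}.2 peels off the factor $Y_{n-1}$ first.

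Part 2 follows by the same strategy, with Lemma \ref{emb}.2 as the base case ($n = 3$, $a = b = 1$) and Lemma \ref{emb2}.3, 4 as the inductive steps. No analog of the inequality $w_1(Y_{a-1}) \neq Y_a$ is needed because the bridging in the $\mathbf{Z}$ case occurs at the self-dual circle $\mathbf{SD}_A = \mathbf{C}^A_{1/2}$, and the three factors $Y_{a-1}, Y_a, Y_{a+1}$ sit on circles of strictly different radii $1/(2p), 1/2, 1-1/(2p)$; accordingly the base case Lemma \ref{emb}.2 imposes no such inequality, and none propagates through the recursion.

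The main obstacle is the careful bookkeeping of circle indices. Lemma \ref{le} specifies the Hasse invariant of each intermediate quotient $E/p^jC$ along the cyclic chain, and one must verify that the $i$-th projection $Y_i = \pi_{i, n-1-i}(E, C)$ lands on exactly the circle $\mathbf{C}^A_q$ claimed in the statement, namely $q = 1/p^{a-i-1}(p+1)$ for $0 \leq i \leq a-1$ and $q = 1 - 1/p^{i-a}(p+1)$ for $a \leq i \leq n-1$ in part 1, and analogously for part 2. This is a routine but indispensable reindexing; once it is in place, iteration of Lemma \ref{emb2} immediately yields the claimed embedding, and the compatibility relations $\pi_v(Y_i) = \pi_f(Y_{i+1})$ cut out the image because they express precisely what is needed for the successive $p$-isogenies $Y_0 \to Y_1 \to \cdots \to Y_{n-1}$ to assemble into a cyclic subgroup of order $p^n$ in $E$.
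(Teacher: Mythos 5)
Your proof takes essentially the same route as the paper's: the paper also proves this by induction, using Lemma \ref{emb} as the base case at $(a,b)=(1,1)$ and Lemma \ref{emb2} as the inductive step, peeling off one factor at a time (it phrases the induction via the product partial order on $(a,b)$ rather than on $n=a+b$, but the recursion is the same). Your version supplies more of the index bookkeeping that the paper leaves implicit, which is a reasonable elaboration rather than a different argument.
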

\begin{proof}
We consider an order $(a,b) \geq (a',b') 
\Longleftrightarrow a \geq a', b \geq b'$ on
$\mathbb{Z}_{\geq 1}^{\times 2}.$
We prove the assertion $1$ by an induction on $(a,b) \in \mathbb{Z}_{\geq 1}^{\times 2}$
with respect to this order.
For $(a,b)=(1,1),$ the assertion follows from Lemma \ref{emb}.
Assuming the assertion $1$ for the case $(a,b-1)$ or $(a-1,b),$ 
we prove the assertion for $(a,b).$
By the induction hypothesis and Lemma \ref{emb2}, the assertion follows immediately.
We prove the assertion $2$ in the same way as $1.$
We omit the proof.
\end{proof}

In the following corollary, we consider embeddings of the spaces
$\mathbf{Y}^A_{a,b} \subset W_A(p^n)$ with $a+b=n \geq 2$
and $\mathbf{Z}^A_{a,b} \subset W_A(p^n)$ with $a+b=n-1 \geq 2$
into products of subspaces of $W_A(1),$ and determine its images.
These identifications play an important role in
computations of the reductions of the spaces $\mathbf{Y}^A_{a,b}\ (a+b \leq 4)$
 and $\mathbf{Z}^A_{a,b}\ (a+b \leq 3).$

\begin{corollary}\label{prod}
We have the followings
\\1. The space $\mathbf{Y}^A_{a,b}$ with $a+b=n \geq 2$ 
is embedded into the following product of the subspaces of $W_A(1)$
by a map
$\prod_{0 \leq i \leq n}\pi_{i,n-i}:W_A(p^n) \hookrightarrow W_A(1)^{\times (n+1)}$
$$(\{X_i\}_{0 \leq i \leq n}) \in \prod_{0 \leq i \leq a-1}\mathbf{C}^{A,0}_{1/p^{a-i-1}(p+1)} \times 
\mathbf{C}^{A,0}_{\geq p/(p+1)} \times
\prod_{a+1 \leq i \leq n}\mathbf{C}^{A,0}_{1/p^{i-a-1}(p+1)}$$
and its image is determined by the following equations
$\{F_p^{\beta_0}(X_i,X_{i+1})=0\}_{0 \leq i \leq n}$
and $X_{a-1} \neq X_{a+1}.$
\\2. The space $\mathbf{Z}^A_{a,b}$ with $a+b=n-1 \geq 2$
is embedded into the following product of the subspaces of $W_A(1)$
by a map
$\prod_{0 \leq i \leq n}\pi_{i,n-i}$
$$(\{X_i\}_{0 \leq i \leq n}) \in \prod_{0 \leq i \leq a}\mathbf{C}^{A,0}_{1/2p^{a-i}} \times 
\prod_{a+1 \leq i \leq n}
\mathbf{C}^{A,0}_{1/2p^{i-a-1}}$$
and its image is determined by the following equations
$\{F_p^{\beta_0}(X_i,X_{i+1})=0\}_{0 \leq i \leq n}.$
\end{corollary}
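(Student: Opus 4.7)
The plan is to obtain Corollary \ref{prod} by composing two embeddings already available in the paper: Corollary \ref{emb3}, which embeds $\mathbf{Y}^A_{a,b}$ (resp.\ $\mathbf{Z}^A_{a,b}$) into a product of circles in $W_A(p)$ indexed by $(Y_0, \ldots, Y_{n-1})$ subject to $\pi_v(Y_i) = \pi_f(Y_{i+1})$, and Lemma \ref{key2}/Corollary \ref{key3}, which embeds each $W_A(p)$-factor into $W_A(1)^{\times 2}$ via $(\pi_f, \pi_v)$ with image cut out by $F_p^{\beta_0}(s_1, s_2) = 0$.

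Concretely, set $X_i := \pi_f(Y_i)$ for $0 \leq i \leq n-1$ and $X_n := \pi_v(Y_{n-1})$. The compatibility $\pi_v(Y_i) = \pi_f(Y_{i+1}) = X_{i+1}$ glues the pairs $(\pi_f(Y_i), \pi_v(Y_i))$ into a single sequence $(X_0, \ldots, X_n)$, and each consecutive pair satisfies $F_p^{\beta_0}(X_i, X_{i+1}) = 0$ by Lemma \ref{key2}. The valuations of the $X_i$ are read off from the appropriate case of Corollary \ref{key3} applied to each $Y_i$; this is pure bookkeeping, and precisely yields the product of $\mathbf{C}^{A,0}_{\ast}$-spaces claimed in the statement. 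The boundary index $i = a$ produces the closed disk $\mathbf{C}^{A,0}_{\geq p/(p+1)}$ exactly because $Y_{a-1}$ sits on $\mathbf{C}^A_{1/(p+1)}$ and $Y_a$ sits on $\mathbf{C}^A_{p/(p+1)}$, triggering cases 2 and 4 of Corollary \ref{key3}. Injectivity of the composite follows from injectivity of $(\pi_f, \pi_v)$ on $W_A(p)$: each $(X_i, X_{i+1})$ reconstructs $Y_i$ uniquely, and Corollary \ref{emb3} then reconstructs the original point. Conversely, any tuple $(X_0, \ldots, X_n)$ satisfying the stated valuation constraints and the Kronecker equations lifts to a valid sequence of $Y_i$'s in the prescribed circles of $W_A(p)$, again by the converse direction of Corollary \ref{key3}.

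For part 1, what remains is to translate the open condition $w_1(Y_{a-1}) \neq Y_a$ into $X_{a-1} \neq X_{a+1}$. Using $\pi_f \circ w_1 = \pi_v$, equality $w_1(Y_{a-1}) = Y_a$ forces $X_{a+1} = \pi_v(Y_a) = \pi_f(Y_{a-1}) = X_{a-1}$; conversely, if $X_{a-1} = X_{a+1}$ then both $Y_a$ and $w_1(Y_{a-1})$ have the same image under $(\pi_f, \pi_v)$ (namely $(X_a, X_{a-1})$), hence coincide. Part 2 is strictly simpler since no such open condition intervenes. The only real obstacle in the whole argument is the case-by-case bookkeeping that matches each index $i$ to the correct regime of Corollary \ref{key3}; once this dictionary is laid out, the rest of the proof is essentially formal diagram-chasing between the two embeddings.
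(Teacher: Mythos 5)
Your argument is exactly the paper's proof, spelled out: the paper's proof of this corollary is the one-liner ``Using Corollaries \ref{emb3} and \ref{key3}, we obtain the required assertions,'' and you have filled in precisely the gluing, the case bookkeeping for the valuations, and the translation of $w_1(Y_{a-1})\neq Y_a$ into $X_{a-1}\neq X_{a+1}$ that this one-liner suppresses. All the steps check out (in particular the use of $\pi_f\circ w_1=\pi_v$ and the injectivity of $(\pi_f,\pi_v)$ on $W_A(p)$ from Lemma \ref{key2}), so this is a correct and faithful expansion of the intended argument rather than a different route.
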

\begin{proof}
Using Corollaries \ref{emb3} and \ref{key3}, we obtain the required assertions.
\end{proof}

\section{Review of the irreducible components in the reduction of the supersingular locus 
$W_A(p^3)$
 in \cite{CM}}

\noindent
In this section, we review the main part of the work of \cite{CM}, namely
the calculations of the ``bridging component''
$\overline{\mathbf{Z}}^A_{1,1}$ and the ``new components''
defined by $a^p-a=s^2.$
See \cite[Sections 6,7 and 8]{CM}.
The new components attach to the curve $\overline{\mathbf{Z}}^A_{1,1}$
at distinct singular points in the stable reduction of the modular curve $X_0(p^3).$
Coleman-McMurdy calculated the new components using the Gross-Hopkins theory.
In this section, we recalculate the bridging component and the new components
in an elementary manner with using the identification in Corollary \ref{prod}.
However, our calculation of the bridging component
is essentially the same as Coleman-McMurdy's one.

\subsection{The bridging component in the stable reduction of $X_0(p^3)$
in \cite[Section 8]{CM}}

By using Corollary \ref{key} and Corollary \ref{prod},
we recalculate ``the bridging component'' in the stable reduction
of $X_0(p^3),$ i.\ e.\ the reduction of $\mathbf{Z}^A_{1,1},$
which is calculated by Coleman-McMurdy in \cite[Section 7]{CM}.
All results in this subsection are already proved in loc.\ cit.

Let $\alpha, \gamma$ be elements
such that $\alpha^{2p}=p$
and
$\gamma^p=p/\alpha^{p+1}.$
We have $v(\alpha)=1/2p,v(\gamma)=(p-1)/2p^2.$
If we put $\alpha_1:=\alpha/\gamma,$
we have $\alpha=\alpha_1^p,$
$\gamma=\alpha_1^{p-1}$ and
$v(\alpha_1)=1/2p^2.$

Fix a supersingular elliptic curve $A/\mathbb{F}_p$
with $j(A) \neq 0,1728.$
We recall the identification of $\mathbf{Z}^A_{1,1}$
in Corollary \ref{prod}
$$\mathbf{Z}^A_{1,1}
\simeq \{(\{X_i\}_{0 \leq i \leq 3}) \in \mathbf{C}^{A,0}_{1/2p} \times 
\mathbf{C}^{A,0}_{1/2} \times \mathbf{C}^{A,0}_{1/2} \times \mathbf{C}^{A,0}_{1/2p}
|\ F_p^{\beta_0}(X_i,X_{i+1})=0\ (0 \leq i \leq 2)\}$$
and rewrite parameters as follows $X,U,V$ and $Y.$
We change variables as follows
$X=\alpha x, U=\alpha^p u, V=\alpha^{p} v$ and $Y=\alpha y,$ 
with $x,u,v$ and $y$ invertible functions.
Now, we simply write
$F_p(X,Y)$ for $F_p^{\beta_0}.$

\begin{proposition}(\cite[Proposition 8.2]{CM})\label{p1}
Over $R=\mathbb{Z}_p[\alpha_1] \otimes \mathbb{Z}_{p^2},$
the reduction
of $\mathbf{Z}^A_{1,1}$
is defined by the following equation
$$Z^p+\bar{c}_0\biggl(\frac{\bar{c}_0}{x^{p+1}}+\frac{x^{p+1}}{\bar{c}_0}\biggr)=0.$$
Hence, over $R,$
its reduction
is a reduced, connected, affine curve
of genus $0$ with only one branch through each sungular point $x=\zeta$
with $\zeta^{2(p+1)}=\bar{c}^2_0.$
\end{proposition}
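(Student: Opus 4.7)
The plan is to use Corollary \ref{prod}.2, which realizes $\mathbf{Z}^A_{1,1}$ as the subvariety of $\mathbf{C}^{A,0}_{1/(2p)} \times \mathbf{C}^{A,0}_{1/2} \times \mathbf{C}^{A,0}_{1/2} \times \mathbf{C}^{A,0}_{1/(2p)}$ cut out by three Kronecker relations $F_p^{\beta_0}(X_i,X_{i+1})=0$. Label the coordinates $X,U,V,Y$ and rescale by $X=\alpha x$, $U=\alpha^p u$, $V=\alpha^p v$, $Y=\alpha y$ (with $\alpha^{2p}=p$), so that $x,u,v,y$ become units. To the two outer equations I would apply Corollary \ref{key}.1 (its hypothesis $v(\alpha)=1/(2p)\leq 1/(p+1)$ is satisfied), obtaining $u = x^p + c_0\alpha^{p-1}/x + c_1\alpha^p + O(\alpha^{p+1})$ and symmetrically for $v$. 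To the middle equation I would apply Corollary \ref{key}.2 with ``$\alpha$'' set to $\alpha^p$ (valuation $1/2$), yielding $uv \equiv c_0 + c_1\alpha^p(u+v)\pmod p$.

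Substituting the outer expansions into the middle relation, the $c_1\alpha^p(x^p+y^p)$ contributions on both sides cancel, and for $p\geq 3$ the squared cross terms $c_0^2\alpha^{2(p-1)}/(xy)$ lie in $O(\alpha^{p+1})$. After dividing by $\alpha^{p-1}=\gamma^p$, what remains is
\[
\frac{(xy)^p - c_0}{\gamma^p} + \frac{c_0(x^{p+1}+y^{p+1})}{xy} \equiv 0 \pmod{\alpha^2}.
\]
Fix any $c\in \mathbb{Z}_{p^2}$ with $\bar c^p = \bar c_0$ and set $Z := (xy-c)/\gamma$. Using $(xy-c)^p = (xy)^p - c^p$ in characteristic $p$, together with $v(p),\, v(c^p-c_0)\geq 1 > v(\gamma^p)$, we see that $\bar Z^p$ coincides with the reduction of $((xy)^p - c_0)/\gamma^p$; moreover $xy = c + \gamma Z$ reduces to $\bar x\bar y = \bar c$. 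Substituting $\bar y = \bar c/\bar x$ into the displayed congruence gives $Z^p + (\bar c_0/\bar c)\,x^{p+1} + \bar c_0^2/x^{p+1} = 0$. Since $A$ is defined over $\mathbb{F}_p$ we may take $\beta_0\in\mathbb{Z}_p$, whence Lemma \ref{0} shows $c_0\in\mathbb{Z}_p^{\ast}$ and $\bar c_0 \in \mathbb{F}_p^{\ast}$; Fermat then gives $\bar c_0/\bar c = \bar c_0^{1-p} = 1$, and the equation collapses to the asserted $Z^p + \bar c_0(\bar c_0/x^{p+1} + x^{p+1}/\bar c_0) = 0$.

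For the geometric claims, the defining polynomial is irreducible over $\bar{\mathbb{F}}_p$, so the reduction is reduced and connected. Since $\partial_Z\equiv 0$ in characteristic $p$, the singular locus is cut out by $\partial_x F = (p+1)(x^p - \bar c_0^2/x^{p+2}) = 0$, giving exactly the $2(p+1)$ points with $x^{2(p+1)} = \bar c_0^2$. At any such $x=\zeta$, translating $x = \zeta+\eta$ and $Z = Z_0 + \delta$ with $Z_0^p = -2\zeta^{p+1}$, and expanding using $(\zeta+\eta)^{p+1} = \zeta^{p+1} + \zeta^p\eta + \zeta\eta^p + \eta^{p+1}$, produces the local equation $\delta^p = \eta^2\cdot(\text{unit in the completed local ring})$, which is irreducible for odd $p$, so exactly one analytic branch passes through each singular point. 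Genus $0$ follows from Riemann-Hurwitz applied to the tower $k(C)\supset k(Y,Z)\supset k(Z)$ where $Y=x^{p+1}$: the first step is a tamely ramified quadratic whose discriminant $Z^{2p}-4\bar c_0^2$ vanishes at the two places $Z^p = \pm 2\bar c_0$ (contributing different $2$), and the second is a tame Kummer $(p+1)$-extension along the unique zero and unique pole of $Y$ on the intermediate curve (contributing different $2p$); the two tame contributions exactly cancel the Euler-characteristic deficit, yielding $g=0$. The main obstacle is the precision management in the second paragraph: all three Kronecker expansions must be aligned to order $\alpha^p$, and the cancellation of the $c_1\alpha^p(u+v)$ term, which is ultimately forced by the symmetry $f_1(X,Y)=f_1(Y,X)$ from Lemma \ref{0}, is what makes the clean bridging equation emerge.
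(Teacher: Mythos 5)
Your proof follows essentially the same computational route as the paper's: both embed $\mathbf{Z}^A_{1,1}$ via Corollary \ref{prod}.2, rescale so that $\alpha^{2p}=p$, apply Corollary \ref{key}.1 to the outer Kronecker equations and Corollary \ref{key}.2 to the middle one, multiply the expansions for $u$ and $v$, and extract $Z^p$ from $((xy)^p-c_0)/\gamma^p$ after setting $xy=c_0+\gamma Z$. Your detour through an auxiliary $c$ with $\bar c^{\,p}=\bar c_0$ is a cosmetic variant of the paper's direct $c=c_0$; in both cases the step that drops the constant contribution uses $v(c_0^p-c_0)\geq 1$, equivalently $\bar c_0\in\mathbb{F}_p$, which you make explicit (the paper leaves it implicit). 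One small caveat: $\beta_0$ is the specific element produced by de Shalit's parametrization in Theorem \ref{thm}, so you do not literally get to \emph{choose} $\beta_0\in\mathbb{Z}_p$; but since $\beta_0$ lifts $j(A)\in\mathbb{F}_p$, $\bar c_0$ depends only on $\beta_0$ mod $\mathfrak{m}$ and therefore still lies in $\mathbb{F}_p$, which is all you use. The paper simply cites \cite[Proposition~8.2]{CM} for the geometric conclusions, whereas you verify them directly — the $\partial_x$ computation locating the $2(p+1)$ singular points, the local normal form $\delta^p=\eta^2\cdot(\text{unit})$ giving a single branch, and the Riemann–Hurwitz computation through $k(Z)\subset k(Y,Z)\subset k(C)$ giving genus $0$; these checks are correct and make the proposition self-contained where the paper defers to \cite{CM}.
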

\begin{proof}
The equation $F_p(\alpha x,\alpha^p u)=0$ induces the following
congruences by Corollary \ref{key}.1
\begin{equation}\label{w1}
u=x^p+\frac{pc_0}{\alpha^{p+1}}\frac{1}{x}+\alpha^pc_1,
v=y^p+\frac{pc_0}{\alpha^{p+1}}\frac{1}{y}+\alpha^pc_1\ ({\rm mod}\ \alpha^{p+1}).
\end{equation}
The equation $F_p(\alpha^p u,\alpha^p v)=0$
induces the following congruences
by Corollary \ref{key}.2 and (\ref{w1})
\begin{equation}\label{w2}
uv=c_0+c_1\alpha^p(u+v)=c_0+c_1 \alpha^{p}(x^p+y^p)\ ({\rm mod}\ \alpha^{p+1}).
\end{equation}
By (\ref{w1}), we obtain the following
$$uv=x^py^p+\frac{pc_0}{\alpha^{p+1}}\biggl(\frac{x^p}{y}+\frac{y^p}{x}\biggr)
+c_1\alpha^p(x^p+y^p)\ ({\rm mod}\ \alpha^{p+1}).$$
By this congruence and (\ref{w2}), the following congruence holds
\begin{equation}\label{w3}
x^py^p+\frac{pc_0}{\alpha^{p+1}}\biggl(\frac{x^p}{y}+\frac{y^p}{x}\biggr)
=c_0\ ({\rm mod}\ \alpha^{p+1}).
\end{equation}
We introduce a new parameter $Z$ as follows,
 as in \cite[the proof of Proposition 8.2]{CM}
$$
xy=c_0+\gamma Z.
$$
Substituting this to 
the term $x^py^p$ in the left hand side of the congruence 
(\ref{w3}) and dividing it by $(p/\alpha^{p+1})=\alpha^{p-1}$,
we acquire the following
\begin{equation}\label{w5}
Z^p+c_0\biggl(\frac{x^p}{y}+\frac{y^p}{x}\biggr) \equiv
0\ ({\rm mod}\ \alpha^2).
\end{equation}
The required assertion follows from (\ref{w5})
and $xy=\bar{c}_0.$
\end{proof}

\begin{remark}
To calculate the reduction of $\mathbf{Z}^A_{1,1},$
it suffices to consider (\ref{w1}) modulo $\alpha^p.$
However, we need the congruence
(\ref{w1}) to compute the ``new components''
in the stable reduction of $X_0(p^3),$
in the next subsection.
\end{remark}
\begin{remark}
We assume that $j(A) \neq 0, 1728$
in Proposition \ref{p1}.
By \cite[Proposition 4.2]{CM},
as remarked in \cite[Remark 8.8]{CM},
similar results now follow for {\it any} other supersingular
elliptic curve $A'.$
The ``bridging component''
in the supersingular locus $W_{A'}(p^3) \subset X_0(p^3)$
has the following equation
$$Z^p+\bar{c}_0\biggl(\frac{X^{(p+1)/i(A')}}{\bar{c}_0}+\frac{\bar{c}_0}{X^{(p+1)/i(A')}}\biggr)=0.$$
See loc.\ cit.\ for more detail.
\end{remark}

\subsection{The new components in the stable reduction of $X_0(p^3)$ in \cite[subsection 8.2]{CM}}
``The new components'' in the stable reduction of $X_0(p^3),$
defined by the Artin-Schreier equation $a^p-a=s^2,$
were found by using a moduli-theoretic interpretation of $X_0(p^3)$
in \cite[subsection 8.2]{CM}.
The components attach to the reduction of $\mathbf{Z}^A_{1,1}$
at several distinct points in the stable reduction of $X_0(p^3).$
To deduce the Artin-Schreier equation above, Coleman-McMurdy 
construct several involutions on the space $\mathbf{Z}^A_{1,1},$
using the Woods Hole Theory and the Gross-Hopkins theory.
See
\cite[Section 8]{CM}
for more detail.
In this subsection, we recalculate the defining equations
of the new components in a more elementary and explicit manner
without using the above arithmetic theories.

We keep the same notations as in the previous subsection.
Dividing the congruence (\ref{w5})
by $(c_0+\gamma Z)^{p},$
we obtain the following congruence
\begin{equation}\label{w6}
\biggl(\frac{Z}{c_0+\gamma Z}\biggr)^p+
c_0\biggl(\frac{1}{x^{p+1}}+\bigl(\frac{x}{c_0+\gamma Z}\bigr)^{p+1}\biggr)
\equiv 0\ ({\rm mod}\ \alpha^2).
\end{equation}
We set $F(Z,x):=c_0\bigl(\frac{1}{x^{p+1}}+\bigl(\frac{x^{p+1}}{c_0+\gamma Z}\bigr)^{p+1}\bigr).$ 

We choose a root $\gamma_{0,+}$
(resp.\ $\gamma_{0,-}$)
of the following equation
$z^p+2c_0(c_0+\gamma z)^{(p-1)/2}=0.$
(resp.\ $z^p-2c_0(c_0+\gamma z)^{(p-1)/2}=0.$) 
We fix roots $(c_0+\gamma_{0,\pm} \gamma)^{1/2}.$
For each $(p+1)$-th root of unity $\zeta,$
(resp.\ each $(p+1)$-th root of $-1$ $\zeta,$)
we set $x_{0,+,\zeta}:=\zeta(c_0+\gamma_{0,+} \gamma)^{1/2}.$
(resp.\ $x_{0,-,\zeta}:=\zeta(c_0+\gamma_{0,-} \gamma)^{1/2}.$)
For simplicity,
we write $\gamma_0$
for $\gamma_{0,\pm}$
and $x_0$ for $x_{0,\pm, \zeta}$
respectively.
Then, we have an equality
$\bigl(\frac{\gamma_0}{c_0+\gamma_0 \gamma}\bigr)^p+F(\gamma_0,x_0)=0.$

\begin{lemma}\label{l1}
Let the notations be as above.
Then, the followings hold
\begin{itemize}
\item $\partial_xF(\gamma_0,x_0)=0.$\\
\item $v(\partial_ZF(\gamma_0,x_0))=v(\gamma)=(p-1)/2p^2.$\\
\item $\partial^2_xF(\gamma_0,x_0)$ is a unit.\\
\end{itemize}
\end{lemma}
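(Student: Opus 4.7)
My plan is to prove all three assertions by a direct computation that reduces to the single algebraic identity
\[
(c_0 + \gamma_0\gamma)^{p+1} = x_0^{2(p+1)},
\]
which follows from the definition $x_0 = \zeta(c_0+\gamma_0\gamma)^{1/2}$ together with $\zeta^{2(p+1)} = 1$ (valid both in the $\zeta^{p+1}=1$ case and the $\zeta^{p+1}=-1$ case). Reading off $F$ from the congruence (\ref{w6}) as $F(Z,x) = c_0 x^{-(p+1)} + c_0 x^{p+1}(c_0+\gamma Z)^{-(p+1)}$, each of the three claims will fall out of a one-step simplification using this identity.

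For (i), I compute $\partial_x F = -(p+1)c_0 x^{-(p+2)} + (p+1)c_0 x^p(c_0+\gamma Z)^{-(p+1)}$, and the identity collapses the second summand at $(\gamma_0, x_0)$ to $(p+1)c_0/x_0^{p+2}$, which exactly cancels the first. For (ii), only the second summand of $F$ involves $Z$, so $\partial_Z F = -(p+1)c_0\gamma x^{p+1}(c_0+\gamma Z)^{-(p+2)}$; using $(c_0+\gamma_0\gamma)^{p+2} = x_0^{2p+4}/\zeta^2$ this evaluates to $-(p+1)c_0\gamma\zeta^2/x_0^{p+3}$, and the prefactor $\gamma$ carries the entire valuation, giving $v(\partial_Z F(\gamma_0, x_0)) = v(\gamma) = (p-1)/2p^2$. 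For (iii), differentiating $\partial_x F$ once more in $x$ yields two terms that, after the identity is applied, land on the common denominator $x_0^{p+3}$ and now add rather than cancel: the coefficients $(p+1)(p+2)$ and $p(p+1)$ combine to $2(p+1)^2$, producing $2(p+1)^2 c_0/x_0^{p+3}$, a unit.

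The one auxiliary point I will need to record is that $\gamma_0$ and $x_0$ are themselves units, without which the various quotients above would not behave as claimed. For $\gamma_0$ this is immediate by reducing its defining equation $z^p \mp 2c_0(c_0+\gamma z)^{(p-1)/2} = 0$ modulo $\gamma$, which forces the congruence $\gamma_0^p \equiv \pm 2c_0^{(p+1)/2}$ and hence $v(\gamma_0) = 0$; then $x_0 = \zeta(c_0+\gamma_0\gamma)^{1/2}$ is a unit because $c_0+\gamma_0\gamma \equiv c_0 \pmod{\gamma}$ is.

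I do not expect a real obstacle — the lemma is essentially a single-identity computation. The only subtlety worth flagging is that the sign choices $\pm$ in $\gamma_{0,\pm}$ and $x_{0,\pm,\zeta}$, together with the $(p+1)$-th root of unity $\zeta$, all disappear in the even powers that arise in the key identity, so the argument proceeds uniformly in both branches without any case distinction.
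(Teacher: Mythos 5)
Your computation is correct and fills in exactly the "direct computations" that the paper declares and omits: the key identity $(c_0+\gamma_0\gamma)^{p+1}=x_0^{2(p+1)}$, a consequence of $\zeta^{2(p+1)}=1$ in both branches, plus the fact that $\gamma_0$ and $x_0$ are units, reduces each of the three claims to a one-line simplification. (Note that the paper's displayed definition of $F$ has a typo, writing $x^{p+1}/(c_0+\gamma Z)$ inside the $(p+1)$-th power instead of $x/(c_0+\gamma Z)$; you correctly read $F$ off the congruence \eqref{w6}, which is the intended meaning.)
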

\begin{proof}
The assertions follow from direct computations. We omit the detail.
\end{proof}

We choose elements $\gamma_1, \alpha_1$
such that
$$\gamma_1^{p-1}=-c_0^p \partial_ZF(\gamma_0,x_0), 
\alpha_1^2=-\biggl(\frac{\gamma_1}{c_0}\biggr)^p\biggl(\frac{1}{2}\partial^2_xF(\gamma_0,x_0)\biggr)^{-1}.$$
By Lemma \ref{l1},
we have $v(\gamma_1)=1/2p^2,v(\alpha_1)=1/4p.$

\begin{lemma}\label{l2}
Let the notation be as above.
We change variables as follows
$Z=\gamma_0+\gamma_1 a$ and $x=x_0+\alpha_1 s.$
Then, the following congruence holds
\begin{equation}\label{eq1}
F(\gamma_0+\gamma_1 a,x_0+\alpha_1 s)
=F(\gamma_0,x_0)+\frac{1}{2}\partial^2_xF(\gamma_0,x_0)(\alpha_1 s)^2
+\partial_ZF(\gamma_0,x_0)\gamma_1 a\ ({\rm mod}\ \gamma \gamma_1^2)
\end{equation}
\end{lemma}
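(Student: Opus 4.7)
The statement is a bivariate Taylor expansion estimate for $F$ around the special point $(\gamma_0,x_0)$, so my plan is to write out the full Taylor series and then bound the valuation of every term that does not appear on the right-hand side of (\ref{eq1}).

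First I would observe that since $c_0+\gamma Z$ and $x$ are both units throughout the region of interest, the function $F(Z,x)=c_0\bigl(x^{-(p+1)}+x^{p+1}(c_0+\gamma Z)^{-(p+1)}\bigr)$ is rigid analytic there, and therefore admits an absolutely convergent Taylor expansion
\begin{equation*}
F(\gamma_0+\gamma_1 a,\,x_0+\alpha_1 s)=\sum_{i,j\geq 0}\frac{1}{i!\,j!}\,\partial_Z^i\partial_x^j F(\gamma_0,x_0)\,(\gamma_1 a)^i(\alpha_1 s)^j.
\end{equation*}
The three terms $(i,j)=(0,0),(1,0),(0,2)$ produce exactly the right-hand side of (\ref{eq1}), and by Lemma \ref{l1} the term $(i,j)=(0,1)$ vanishes because $\partial_x F(\gamma_0,x_0)=0$. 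So it remains to prove that every other term lies in the ideal generated by $\gamma\gamma_1^2$, i.e. has valuation at least $v(\gamma\gamma_1^2)=(p+1)/(2p^2)$.

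The key observation for the valuation estimate is that $F$ depends on $Z$ only through the factor $(c_0+\gamma Z)^{-(p+1)}$, so every application of $\partial_Z$ produces an extra factor of $\gamma$. Combined with the fact that $x$ and $c_0+\gamma Z$ are units at $(\gamma_0,x_0)$, this yields $v\bigl(\partial_Z^i\partial_x^j F(\gamma_0,x_0)\bigr)\geq i\,v(\gamma)$. Hence the $(i,j)$-th term in the expansion has valuation at least
\begin{equation*}
i\,v(\gamma)+i\,v(\gamma_1)+j\,v(\alpha_1)=\tfrac{i(p-1)}{2p^2}+\tfrac{i}{2p^2}+\tfrac{j}{4p}=\tfrac{ip}{2p^2}+\tfrac{j}{4p}.
\end{equation*}
I would then split into three cases: (a) $i\geq 2$, which gives valuation $\geq 2v(\gamma)+2v(\gamma_1)> v(\gamma\gamma_1^2)$ automatically; (b) $i=1$ with $j\geq 1$, where the minimal case $j=1$ gives $3/(4p)$ which exceeds $(p+1)/(2p^2)$ for $p\geq 2$; and (c) $i=0$ with $j\geq 3$, where the minimal case $j=3$ gives $3/(4p)$, again exceeding $(p+1)/(2p^2)$ for $p\geq 2$. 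Since $p\geq 13$ throughout the paper, all three estimates hold.

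There is no substantial obstacle to this argument since $\partial_x F(\gamma_0,x_0)=0$ is handed to us by Lemma \ref{l1}; the only mildly delicate point is the uniform bound $v(\partial_Z^i\partial_x^j F(\gamma_0,x_0))\geq i v(\gamma)$, which I would justify explicitly by differentiating $F$ and noting that each $Z$-derivative strips out one factor of $\gamma$ while the denominators $(c_0+\gamma Z)^k$ and $x^k$ remain units at the evaluation point. Assembling these estimates with the Taylor expansion completes the proof of the congruence.
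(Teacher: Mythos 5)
Your proof is correct and proceeds by the same mechanism as the paper's: Taylor expansion of $F$ at $(\gamma_0,x_0)$, vanishing of the $(0,1)$ term by Lemma \ref{l1}, and the key observation that each $\partial_Z$-derivative carries a factor of $\gamma$ while the denominators $x$ and $c_0+\gamma Z$ stay units. Your version is in fact a little more thorough than the published proof, which explicitly bounds only the mixed quadratic term $(i,j)=(1,1)$ and leaves the remaining excess terms implicit even though, for instance, the $(0,3)$ term has exactly the same borderline valuation $3v(\alpha_1)=3/(4p)$; your uniform estimate $v\bigl(\partial_Z^i\partial_x^j F(\gamma_0,x_0)\bigr)\geq i\,v(\gamma)$ disposes of all of them at once, and the only small point worth spelling out is that the $p$-adic factorials in $\frac{1}{i!j!}$ do not degrade the bound, because the Taylor coefficients of $F$ at a point where $x_0$ and $c_0+\gamma\gamma_0$ are units are already $p$-adic integers times $\gamma^i$.
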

\begin{proof}
By Lemma \ref{l1}.1, it suffices to show that $\partial_x\partial_ZF(\gamma_0,x_0)\gamma_1\alpha_1
\equiv 0\ ({\rm mod}\ \gamma \gamma_1^2).$
Since we have $v(\partial_x\partial_ZF(\gamma_0,x_0))
=v(\gamma)$ and $v(\alpha_1)>v(\gamma_1),$
the required assertion follows.
\end{proof}

\begin{corollary}(\cite[Proposition 8.7]{CM})\label{go1}
For each supersingular elliptic curve $A/\mathbb{F}_p$
with $j(A) \neq 0,1728,$
there exist
$2(p+1)$ components in the stable reduction of $X_0(p^3),$
defined by the following equation
$$a^p-a=s^2.$$
These components attach to the reduction of $\mathbf{Z}^A_{1,1}$
at its singular points.
\end{corollary}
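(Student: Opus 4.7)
My plan is to zoom in on the $2(p+1)$ singular points of the bridging component one at a time, perform a careful change of variables, and extract the Artin--Schreier equation $a^p-a=s^2$ from the congruence (\ref{w6}).

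First I would note that by Proposition \ref{p1}, the singular points of $\overline{\mathbf{Z}}^A_{1,1}$ are exactly the points where $x^{2(p+1)}=\bar{c}_0^2$. Splitting into $x^{p+1}=\bar{c}_0$ (giving $Z^p=-2\bar{c}_0$) and $x^{p+1}=-\bar{c}_0$ (giving $Z^p=2\bar{c}_0$) accounts for the two families of lifts $(x_{0,+,\zeta},\gamma_{0,+})$ and $(x_{0,-,\zeta},\gamma_{0,-})$ defined before Lemma \ref{l1}, so that $2(p+1)$ lifted points are singled out. For each such lift I would introduce the dilated coordinates $Z=\gamma_0+\gamma_1 a$, $x=x_0+\alpha_1 s$, using the scalings already fixed via $\gamma_1^{p-1}=-c_0^p\partial_Z F(\gamma_0,x_0)$ and $\alpha_1^2=-(\gamma_1/c_0)^p(\tfrac12\partial_x^2 F(\gamma_0,x_0))^{-1}$; the valuations $v(\gamma_1)=1/2p^2$, $v(\alpha_1)=1/4p$ from Lemma \ref{l1} will ensure that the resulting affinoid has the correct size to be a wide-open piece of the stable covering attaching to the singular point.

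The main computation is to substitute these into (\ref{w6}), namely
\[
\left(\frac{Z}{c_0+\gamma Z}\right)^p + F(Z,x) \equiv 0 \pmod{\alpha^2}.
\]
For the second term I invoke Lemma \ref{l2} directly. For the first term I would write $g(Z):=Z/(c_0+\gamma Z)$ and expand
\[
g(\gamma_0+\gamma_1 a)=g(\gamma_0)+\frac{c_0\gamma_1 a}{(c_0+\gamma\gamma_0)^2}+O(\gamma_1^2 a^2),
\]
then take the $p$-th power; the cross terms carry an explicit factor of $p$ and therefore vanish mod $\alpha^2$, leaving only $g(\gamma_0)^p$ plus the $p$-th power of the correction. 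The constants $g(\gamma_0)^p+F(\gamma_0,x_0)$ cancel by the defining equation for $\gamma_0$. A valuation count shows that the three surviving leading terms $(c_0\gamma_1 a/(c_0+\gamma\gamma_0)^2)^p$, $\partial_Z F(\gamma_0,x_0)\gamma_1 a$, and $\tfrac12\partial_x^2 F(\gamma_0,x_0)(\alpha_1 s)^2$ all have valuation $1/(2p)$, so they are the only contributions mod $\alpha^2$. Dividing through by $\gamma_1^p/c_0^p$ and using the normalizations to identify the coefficients (and observing that $c_0^{2p}/(c_0+\gamma\gamma_0)^{2p}$ reduces to $1$) yields the reduction $a^p-a-s^2=0$.

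The main obstacle is the valuation bookkeeping: I must verify that every other term in the expansion (higher-order terms in the Taylor expansion of $F$ beyond Lemma \ref{l2}, mixed derivatives, and the lower-order terms in the $p$-th power expansion of $g(Z)^p$) has valuation strictly exceeding that of $\alpha^2$, so that nothing other than the three terms above survives in the reduction. Once this is confirmed, each of the $2(p+1)$ singular lifts contributes one irreducible component isomorphic to $\{a^p-a=s^2\}$, and distinctness of the singular points on $\overline{\mathbf{Z}}^A_{1,1}$ ensures that these are genuinely $2(p+1)$ distinct new components in the stable reduction, attaching to $\overline{\mathbf{Z}}^A_{1,1}$ as claimed.
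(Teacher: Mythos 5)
Your plan is the same proof as the paper's, but there is one concrete error in the valuation bookkeeping that would cause the verification you flag at the end to fail as stated.

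The problem is the modulus $\alpha^2$. You propose to check that every error term in the expansion has valuation strictly exceeding $v(\alpha^2)=1/p$, but this is false. Take for instance the mixed Taylor term $\partial_x\partial_Z F(\gamma_0,x_0)\,\gamma_1\alpha_1\,as$. From the proof of Lemma \ref{l2}, $v(\partial_x\partial_Z F(\gamma_0,x_0))=v(\gamma)=(p-1)/2p^2$, so this term has valuation
$$\frac{p-1}{2p^2}+\frac{1}{2p^2}+\frac{1}{4p}=\frac{1}{2p}+\frac{1}{4p}=\frac{3}{4p}<\frac{1}{p}=v(\alpha^2),$$
so it does \emph{not} vanish modulo $\alpha^2$. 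The same problem occurs for the cubic term $\partial_x^3 F(\gamma_0,x_0)(\alpha_1 s)^3$ (since $\partial_x^3 F(\gamma_0,x_0)$ is a unit, its valuation is $3v(\alpha_1)=3/4p<1/p$). Moreover, Lemma \ref{l2} is only stated modulo $\gamma\gamma_1^2$, so by invoking it directly you cannot claim the resulting expansion holds modulo $\alpha^2$ anyway; these two claims in your sketch are inconsistent with each other.

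The fix is precisely what the paper does: work modulo $\gamma\gamma_1^2$ rather than modulo $\alpha^2$. We have $v(\gamma\gamma_1^2)=(p+1)/2p^2<1/p=v(\alpha^2)$, so the congruence \eqref{w6} modulo $\alpha^2$ certainly implies the congruence modulo $\gamma\gamma_1^2$; and since the three leading terms you correctly identify each have valuation $1/2p=p/2p^2<(p+1)/2p^2$, they still survive, while every error term (the cross term of valuation $3/4p=3p/4p^2>(2p+2)/4p^2$, the cubic terms, the binomial cross terms from $g(Z)^p$, etc.) now does vanish. After the cancellation and the normalizations via $\gamma_1,\alpha_1$, one divides by $(\gamma_1/c_0)^p$ (of valuation $1/2p$, still below $(p+1)/2p^2$), and the residual congruence modulo $\gamma\gamma_1^2/\gamma_1^p$ (of positive valuation $1/2p^2$) yields $a^p-a=s^2$. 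With that substitution of modulus, your argument matches the paper's.
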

\begin{proof}
We have the equality
$\bigl(\frac{\gamma_0}{c_0+\gamma_0 \gamma}\bigr)^p+F(\gamma_0,x_0)=0$
as mentioned
above.
Note that $v(\gamma \gamma_1^2)=(p+1)/2p^2<v(\alpha^2)=1/p.$
Therefore, by Lemma \ref{l2}, the congruence
(\ref{w6}) induces the following
congruence under the variables $(a,s)$
$$\biggl(\frac{\gamma_1}{c_0}\biggr)^pa^p+\frac{1}{2}\partial^2_x
F(\gamma_0,x_0)\alpha_1^2s^2+\partial_ZF(\gamma_0,x_0)\gamma_1a \equiv 0\ ({\rm mod}\ \gamma \gamma_1^2).$$
Hence, by the definitions of $\gamma_1,\alpha_1,$
we acquire
$$\biggl(\frac{\gamma_1}{c_0}\biggr)^p \times (a^p-a-s^2) \equiv 0\ ({\rm mod}\ \gamma \gamma_1^2).$$
Note that $v(\gamma_1^p)=1/2p<v(\gamma \gamma_1^2)=(p+1)/2p^2.$
Dividing the above congruence by $\bigl(\frac{\gamma_1}{c_0}\bigr)^p,$
we obtain the Artin-Schreier equation $a^p-a=s^2.$
\end{proof}

\begin{remark}\label{q1}
The intersection multiplicities for the stable reduction of $X_0(p^3)$
were almost calculated in \cite[Tha table $1$ in subsection 9.1]{CM}.
However, the intersection multiplicities of the bridging component with
the new components were {\it not}
calculated in loc.\ cit.
In this remark, we give the width of each singular residue class
of $\mathbf{Z}^A_{1,1}.$
See \cite[Corollary 7.3]{CM} for the singular residue classes of $\mathbf{Z}^A_{1,1}$.
See loc.\ cit.\ or subsection 5.1
for a relationship between a width and an intersection multiplicity.
We give a complete list of intersection multiplicity data for $X_0(p^3)$
in Table 4 in subsection 5.4.

By putting $Z=\gamma_0+z,x=x_0+t$ in (\ref{w6}), we acquire
the following congruence
\begin{equation}\label{remm}
\biggl(\frac{z}{c_0}\biggr)^p
+\partial_ZF(\gamma_0,x_0)z+\frac{1}{2}\partial_x^2F(\gamma_0,x_0)t^2+f(z,t)=0\ ({\rm mod}\ \alpha^2)
\end{equation}
where $f(z,t)=\sum_{(i,j) \neq (0,0),(1,0),(0,1),(0,2)}\frac{1}{i!j!}\partial_Z^i\partial_x^j
F(\gamma_0,x_0)z^it^j.$
By (\ref{remm}), the defining equation of $\mathbf{Z}^A_{1,1}$
is written as follows
$$\biggl(\frac{z}{c_0}\biggr)^p
+\partial_ZF(\gamma_0,x_0)z+\frac{1}{2}\partial_x^2F(\gamma_0,x_0)t^2+f(z,t)=\alpha^2 G(z,t)$$
with some function $G(z,t).$
Let $S \subset \mathbf{Z}^A_{1,1}$
be a singular residue class. See \cite[Corollary 7.3]{CM} for more detail.
By the computation above, we obtain an explicit description
of $S,$ and its underlying affinoid $\mathbf{X}_S \subset S$
as follows
$$S(\mathbb{C}_p)
=
\{(z,t) \in \mathfrak{m}_{\mathbb{R}_p} \times \mathfrak{m}_{\mathbb{R}_p}
|\ \biggl(\frac{z}{c_0}\biggr)^p
+\partial_ZF(\gamma_0,x_0)z+\frac{1}{2}\partial_x^2F(\gamma_0,x_0)t^2+f(z,t)=\alpha^2 G(z,t)\},$$
$$\mathbf{X}_S(\mathbb{C}_p)=\{(z,t) \in S(\mathbb{C}_p)|v(z) \geq v(\gamma_1),v(t) \geq v(\alpha_1)\}.$$
Hence, the complement $S \backslash \mathbf{X}_S$
is isomorphic to an annulus
$x \in A(p^{-1/4p^2},1)$ by a map
$z=x^2+\cdots,t=x^p+\cdots.$
Therefore, we obtain the width
$1/4p^2$ of the annulus $S \backslash \mathbf{X}_S.$

\end{remark}

\section{Irreducible components appearing in the reduction of the supersingular
locus $W_A(p^4)$}
{\rm
 In this section, fixing a supersingualr elliptic curve
$A/\mathbb{F}_p$ with $j(A) \neq 0, 1728,$
we will calculate the defining equations of all irreducible components
in the reduction of the supersingular locus $W_A(p^4),$
namely the reductions of $\mathbf{Z}^A_{2,1},\mathbf{Z}^A_{1,2},\mathbf{Y}^A_{3,1},\mathbf{Y}^A_{1,3}$
and $\mathbf{Y}^A_{2,2}.$
The main part in this section is in calculating the reduction of $\mathbf{Y}^A_{2,2}$
in Corollary \ref{good}.
Further, we analyze the singular residue classes in $\mathbf{Y}^A_{2,2}.$
In Corollary \ref{op}, we prove 
that there exist $(p+1)$ components, defined by $a^p-a=t^{p+1},$
 in the stable reduction
 of $W_A(p^4).$
These components attach to the reduction of $\mathbf{Y}^A_{2,2}$
at $(p+1)$ singular points.
This is a new phenomenon which appears in the stable reduction of $X_0(p^4).$ 
The curve defined by $a^p-a=t^{p+1}$ is called the Deligne-Lusztig curve
for ${\rm SL}_2(\mathbb{F}_p).$

}

\subsection{The reductions of the subspaces $\mathbf{Z}^A_{2,1}$ and $\mathbf{Z}^A_{1,2}$
in $W_A(p^4) \subset X_0(p^4)$}

{\rm In this subsection, we calculate the reductions of $\mathbf{Z}^A_{2,1}, \mathbf{Z}^A_{1,2}
\subset W_A(p^4).$
These calculations are very similar to 
the ones in Section 3.  

Let $\alpha_2$ be an element such that $\alpha_2^{2p^3}=p.$
We set $\alpha_1:=\alpha_2^p$ and $\alpha:=\alpha_1^p.$
Further, we put $\gamma_1:=\alpha_2^{p-1}$
 and $\gamma:=\gamma_1^p.$
 Then, we have $\gamma^p=p/\alpha^{p+1},$
 $v(\alpha)=1/2p,v(\alpha_1)=1/2p^2$ and $v(\alpha_2)=1/2p^3.$
 Clearly, we have $v(\gamma_1)=(p-1)/2p^3$
 and $v(\gamma)=(p-1)/2p^2.$

As in the previous section, we fix a supersingular elliptic curve
$A/\mathbb{F}_p$ with $j(A) \neq 0,1728.$
We recall the identification of $\mathbf{Z}^A_{2,1}$ in Corollary \ref{prod}
$$\mathbf{Z}^A_{2,1} \simeq \{(\{X_i\}_{0 \leq i \leq 4}) \in 
\mathbf{C}^{A,0}_{1/2p^2} \times \mathbf{C}^{A,0}_{1/2p}
\times \mathbf{C}^{A,0}_{1/2} \times \mathbf{C}^{A,0}_{1/2} 
\times \mathbf{C}^{A,0}_{1/2p} |\ F_p^{\beta_0}(X_i,X_{i+1})=0\ (0 \leq i \leq 3)\}$$
 and rewrite the variables as follows $X_1,X,U,V,Y.$
 We change variables as follows
 $X_1=\alpha_1 x_1, X=\alpha x,U=\alpha^p u, V=\alpha^p v$
 and $Y=\alpha y$
with $x_1,x,u,v,y$ invertible functions.
We simply write $F_p$ for $F_p^{\beta_0}.$

Similarly as above, we consider the identification of $\mathbf{Z}^A_{1,2}$
in Corollary \ref{prod}
$$\mathbf{Z}^A_{2,1} \simeq \{(\{X_i\}_{0 \leq i \leq 4}) \in 
\mathbf{C}^{A,0}_{1/2p} \times \mathbf{C}^{A,0}_{1/2}
\times \mathbf{C}^{A,0}_{1/2} \times \mathbf{C}^{A,0}_{1/2p} 
\times \mathbf{C}^{A,0}_{1/2p^2} |\ F_p(X_i,X_{i+1})=0\ (0 \leq i \leq 3)\}$$
By the symmetry of $F_p$ in Lemma \ref{0},
 the reduction of $\mathbf{Z}^A_{1,2}$
is equal to the reduction of $\mathbf{Z}^A_{2,1}.$
Hence, we only calculate the reduction of $\mathbf{Z}^A_{2,1}.$ 
}{\rm
\begin{lemma}
Over $R:=\mathbb{Z}_p[\alpha_2] \otimes \mathbb{Z}_{p^2},$
the reduction of $\mathbf{Z}^A_{2,1}$
is defined by the following equation
$$Z_1^p+\bar{c}_0\biggl(\frac{\bar{c}_0}{x_1^{p+1}}+\frac{x_1^{p+1}}{\bar{c}_0}\biggr)=0.$$
\end{lemma}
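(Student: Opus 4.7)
The plan is to mirror the proof of Proposition \ref{p1} with one additional relation, using the fact that $\mathbf{Z}^A_{2,1}$ differs from $\mathbf{Z}^A_{1,1}$ only by an extra circle $\mathbf{C}^{A,0}_{1/2p^2}$ glued onto the $X$-side of the tower. Start from the embedding in Corollary \ref{prod}.2 and substitute the normalized coordinates $X_1=\alpha_1 x_1, X=\alpha x, U=\alpha^p u, V=\alpha^p v, Y=\alpha y$. The defining equations of the image are $F_p(X_i,X_{i+1})=0$ for $0\le i\le 3$.

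I would handle the three inner equations $F_p(X,U)=F_p(U,V)=F_p(V,Y)=0$ exactly as in the proof of Proposition \ref{p1}: Corollary \ref{key}.1 gives congruences for $u$ and $v$ modulo $\alpha^{p+1}$, Corollary \ref{key}.2 then produces $uv\equiv c_0+c_1\alpha^p(x^p+y^p)\pmod{\alpha^{p+1}}$, and combining these and introducing the parameter $Z$ by $xy=c_0+\gamma Z$ gives
\begin{equation*}
Z^p+c_0\Bigl(\frac{x^p}{y}+\frac{y^p}{x}\Bigr)\equiv 0\pmod{\alpha^2}.
\end{equation*}
The remaining equation $F_p(\alpha_1 x_1,\alpha x)=0$ is the genuinely new ingredient. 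Since $\alpha=\alpha_1^p$ and $v(\alpha_1)=1/(2p^2)\le 1/(p+1)$, Corollary \ref{key}.1 applied with $\alpha$ replaced by $\alpha_1$ expresses $x$ in terms of $x_1$ modulo $p\alpha_1$, with principal correction term $(pc_0/\alpha_1^{p+1})/x_1$. I would then substitute this expression into $xy=c_0+\gamma Z$ and introduce the new parameter $Z_1$ on the $x_1$-side playing the role that $Z$ played on the $x$-side, i.e.\ by an identity of the form $x_1\cdot(\text{normalized partner})\equiv c_0+\gamma_1 Z_1$, exploiting $\gamma_1^p=p/\alpha_1^{p+1}$ in the same way Proposition \ref{p1} exploited $\gamma^p=p/\alpha^{p+1}$. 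After dividing through by the appropriate power of $\alpha_1$, the resulting reduction over $R$ should take the asserted form $Z_1^p+\bar c_0(\bar c_0/x_1^{p+1}+x_1^{p+1}/\bar c_0)=0$.

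The main obstacle is bookkeeping the valuations cleanly so that the higher-order terms in the expansion $x=x_1^p+(pc_0/\alpha_1^{p+1})x_1^{-1}+\cdots$ genuinely disappear in the reduction (rather than contributing a spurious term to the Artin--Schreier part), and so that $Z_1$ appears as an honest parameter rather than being killed by the substitution. Since the valuation jump between $\alpha$ and $\alpha_1$ is a factor of $p$, the ideal governing the reduction of $\mathbf{Z}^A_{2,1}$ is strictly smaller than that governing $\mathbf{Z}^A_{1,1}$, so the congruence derived for $Z$ above must be sharpened---not merely transported---before substituting $x=x_1^p+\cdots$; once this is done, the equation falls out by direct simplification, and the symmetry $F_p(X,Y)=F_p(Y,X)$ from Lemma \ref{0} immediately yields the identical statement for $\mathbf{Z}^A_{1,2}$.
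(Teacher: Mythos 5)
Your high-level plan is right, and you correctly spot the two essential points: the three inner equations are handled exactly as in Proposition \ref{p1}, yielding $xy=c_0+\gamma Z$ and $Z^p+c_0\bigl(\frac{x^{p+1}}{c_0+\gamma Z}+\frac{c_0}{x^{p+1}}\bigr)\equiv 0\pmod{\alpha^2}$; the outer equation $F_p(\alpha_1 x_1,\alpha x)=0$ gives $x\equiv x_1^p\pmod{\alpha^2}$ (the correction terms die because their valuation exceeds $v(\alpha^2)=1/p$); and some \emph{sharpening} of the resulting congruence must then be performed because the reduction of $\mathbf{Z}^A_{2,1}$ lives on a finer valuation scale than that of $\mathbf{Z}^A_{1,1}$. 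You also correctly observe that the $\mathbf{Z}^A_{1,2}$ case follows from symmetry.

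However, the specific mechanism you propose for introducing $Z_1$ is not viable. You suggest an identity of the form $x_1\cdot(\text{normalized partner})\equiv c_0+\gamma_1 Z_1$, by analogy with $xy=c_0+\gamma Z$. But there is no ``normalized partner'' for $x_1$ in the tower $X_1,X,U,V,Y$ with normalizations $\alpha_1 x_1,\alpha x,\alpha^p u,\alpha^p v,\alpha y$: the last coordinate $Y$ is scaled by $\alpha$, not by $\alpha_1$, so there is no $y_1$ for $x_1$ to pair with. Moreover the relation you invoke, $\gamma_1^p=p/\alpha_1^{p+1}$, is false here: with $\gamma_1=\alpha_2^{p-1}$ and $\alpha_1=\alpha_2^p$ one has $v(\gamma_1^p)=(p-1)/2p^2$, whereas $v(p/\alpha_1^{p+1})=1-(p+1)/2p^2=(2p^2-p-1)/2p^2$, which is larger. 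The paper instead uses a quite different substitution: after rewriting the congruence for $Z$ modulo $\gamma^2$ as $Z^p+c_0\bigl(\frac{x_1^{p(p+1)}}{c_0}+\frac{c_0}{x_1^{p(p+1)}}\bigr)=\gamma\frac{x_1^{p(p+1)}}{c_0}Z$, it defines $Z_1$ by the affine change of variables
$$Z+c_0\Bigl(\frac{x_1^{p+1}}{c_0}+\frac{c_0}{x_1^{p+1}}\Bigr)=\gamma_1\frac{x_1^{p+1}}{c_0}Z_1,$$
and the operative identity is $\gamma_1^p=\gamma$ (not $\gamma_1^p=p/\alpha_1^{p+1}$). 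Raising this to the $p$-th power and comparing with the displayed equation for $Z^p$ gives $Z_1^p\equiv Z\pmod{\gamma}$, and back-substituting yields the equation modulo $\gamma_1^2$, whose reduction is the asserted one. So your proof sketch would need this substitution for $Z_1$ replaced entirely; the ``degenerate Artin--Schreier in $Z$'' structure is what one solves, rather than a product identity on the $x_1$-side.
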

\begin{proof}
As in the proof of Proposition \ref{p1},
we acquire the followings by the equations  $F_p(\alpha x,\alpha^p u)=0$
and $F_p(\alpha^p v,\alpha y)=0$
\begin{itemize}
\item $xy=c_0+\gamma Z$ \\
\item $Z^p+c_0(\frac{x^{p+1}}{c_0+\gamma Z}+\frac{c_0}{x^{p+1}})=0\ ({\rm mod}\ \alpha^2). $\\
\end{itemize}
By Corollary \ref{key}.1,
the equation $F_p(\alpha_1 x_1,\alpha x)=0$
induces the following $x=x_1^p\ ({\rm mod}\ \alpha^2).$
Therefore, we obtain the following congruence by 2
\begin{equation}\label{ioh}
Z^p+c_0\biggl(\frac{x_1^{p(p+1)}}{c_0+\gamma Z}+\frac{c_0}{x_1^{p(p+1)}}\biggr)=0\ ({\rm mod}\ \alpha^2).
\end{equation}
Since we have $(c_0+\gamma Z)^{-1} \equiv 1/c_0-\gamma Z/c_0^2$ modulo
$\gamma^2$ and $v(\gamma)<v(\alpha),$
the above congruence (\ref{ioh}) is rewritten as follows
\begin{equation}\label{41}
Z^p+c_0\biggl(\frac{x_1^{p(p+1)}}{c_0}+\frac{c_0}{x_1^{p(p+1)}}\biggr)
=\gamma\frac{x_1^{p(p+1)}}{c_0}Z\ ({\rm mod}\ \gamma^2).
\end{equation}
We introduce a new parameter $Z_1$
as follows
\begin{equation}\label{42}
Z+c_0\biggl(\frac{x_1^{p+1}}{c_0}+\frac{c_0}{x_1^{p+1}}\biggr)
=\gamma_1\frac{x_1^{p+1}}{c_0}Z_1.
\end{equation}
Substituting this to (\ref{41}) and dividing it by $\gamma,$
we obtain $Z_1^p=Z\ ({\rm mod}\ \gamma).$
Again substituting this to (\ref{42}), the following congruence holds
$$Z_1^p+c_0\biggl(\frac{x_1^{p+1}}{c_0}(1-\gamma_1Z_1/c_0)+\frac{c_0}{x_1^{p+1}}\biggr)=0\ ({\rm mod}\ \gamma).$$
Since we have $1-\gamma_1 Z_1/c_0=(1+\gamma_1Z_1/c_0)^{-1}$ modulo
$\gamma_1^2,$ we acquire the following congruence,
which is similar to (\ref{w6}),
\begin{equation}\label{ww1}
\biggl(\frac{Z_1}{c_0+\gamma_1Z_1}\biggr)^p
+c_0\biggl(\frac{1}{x_1^{p+1}}+\bigl(\frac{x_1}{c_0+\gamma_1Z_1}\bigr)^{p+1}\biggr)=0\
({\rm mod}\ \gamma_1^2)
\end{equation}
Hence, the required assertion follows.
\end{proof}

\begin{proposition}
For each supersingular elliptic curve $A/\mathbb{F}_p$ with $j(A) \neq 0,1728,$
there exist $2(p+1)$ irreducible components, which attach to the component 
$\overline{\mathbf{Z}}^A_{2,1}$ in the stable reduction of $X_0(p^4),$
defined by the following equation
$$a^a-a=s^2.$$
\end{proposition}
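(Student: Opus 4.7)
The plan is to repeat, \emph{mutatis mutandis}, the argument used to prove Corollary~\ref{go1}, since the congruence (\ref{ww1}) defining $\mathbf{Z}^A_{2,1}$ has exactly the same structural shape as the congruence (\ref{w6}) defining $\mathbf{Z}^A_{1,1}$, with $(x,Z,\gamma,\alpha^2)$ replaced throughout by $(x_1,Z_1,\gamma_1,\gamma_1^2)$. In particular, the function $F(Z,x)=c_0\bigl(1/x^{p+1}+(x/(c_0+\gamma Z))^{p+1}\bigr)$ is replaced by $F_1(Z_1,x_1):=c_0\bigl(1/x_1^{p+1}+(x_1/(c_0+\gamma_1 Z_1))^{p+1}\bigr)$, and the algebraic manipulations from (\ref{w6}) to the Artin--Schreier equation $a^p-a=s^2$ are purely formal, depending only on this structural form.

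Concretely, I first locate the $2(p+1)$ singular residue classes of $\overline{\mathbf{Z}}^A_{2,1}$ by choosing $\delta_{0,\pm}$ to be roots of $z^p\pm 2c_0(c_0+\gamma_1 z)^{(p-1)/2}=0$ and setting $x_{1,0,\pm,\zeta}:=\zeta(c_0+\delta_{0,\pm}\gamma_1)^{1/2}$ for each $\zeta$ with $\zeta^{p+1}=\pm 1$. A direct computation identical to Lemma~\ref{l1} yields $\partial_{x_1}F_1(\delta_0,x_{1,0})=0$, $v(\partial_{Z_1}F_1(\delta_0,x_{1,0}))=v(\gamma_1)$, and $\partial_{x_1}^2 F_1(\delta_0,x_{1,0})$ a unit. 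I then introduce scaling elements $\delta_1$ with $\delta_1^{p-1}=-c_0^p\,\partial_{Z_1}F_1(\delta_0,x_{1,0})$ (giving $v(\delta_1)=1/2p^3$) and $\beta_3$ with $\beta_3^2=-(\delta_1/c_0)^p\bigl(\tfrac12\partial_{x_1}^2 F_1(\delta_0,x_{1,0})\bigr)^{-1}$ (giving $v(\beta_3)=1/4p^2$), and change variables via $Z_1=\delta_0+\delta_1 a$, $x_1=x_{1,0}+\beta_3 s$. Applying the Taylor expansion exactly as in Lemma~\ref{l2} reduces (\ref{ww1}) modulo $\gamma_1\delta_1^2$ to $(\delta_1/c_0)^p(a^p-a-s^2)\equiv 0$; dividing by $(\delta_1/c_0)^p$ yields the desired equation $a^p-a=s^2$.

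The only nontrivial point is the valuation bookkeeping that justifies dropping the higher-order Taylor remainder and the mixed second-order term $\partial_{x_1}\partial_{Z_1}F_1(\delta_0,x_{1,0})\delta_1\beta_3$. These are all of valuation at least $v(\gamma_1\delta_1^2)=(p+1)/2p^3$, which must be strictly less than the modulus $v(\gamma_1^2)=(2p-2)/2p^3$; the inequality $(p+1)/2p^3<(2p-2)/2p^3$ reduces to $p>3$ and so holds under our standing hypothesis $p\geq 13$. One also checks $v(\delta_1^p)=1/2p^2<v(\gamma_1\delta_1^2)$, so the Frobenius term $a^p$ survives after division. Once the equation $a^p-a=s^2$ has been extracted at each of the $2(p+1)$ residue classes indexed by the pairs $(\pm,\zeta)$, we obtain $2(p+1)$ isomorphic copies of the Artin--Schreier curve attached to $\overline{\mathbf{Z}}^A_{2,1}$ in the stable reduction of $X_0(p^4)$.
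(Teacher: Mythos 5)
Your proof is correct and takes essentially the same approach as the paper, which itself just says the statement is deduced from (\ref{ww1}) exactly as Corollary~\ref{go1} was deduced from (\ref{w6}); you have spelled out the substitution $(x,Z,\gamma,\alpha^2)\mapsto(x_1,Z_1,\gamma_1,\gamma_1^2)$ and verified all the requisite valuations ($v(\delta_1)=1/2p^3$, $v(\beta_3)=1/4p^2$, $v(\gamma_1\delta_1^2)=(p+1)/2p^3<v(\gamma_1^2)$, $v(\delta_1^p)<v(\gamma_1\delta_1^2)$) correctly.
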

\begin{proof}
We deduce the assertion from the congruence (\ref{ww1}) in the same way as
Corollary \ref{go1} is deduced from the congruence (\ref{w6}) in the previous section.
\end{proof}
\begin{remark}
Let $S \subset \mathbf{Z}^A_{2,1}$ denote the singular residue class.
This space and its underlying affinoid $\mathbf{X}_S$
have similar descriptions to the ones in Remark \ref{q1}.
The complement $\mathbf{X}_S \backslash S$
is an annulus with width $1/4p^3.$
\end{remark}

\subsection
{Edixhoven's horizontal component $\overline{\mathbf{Y}}^A_{1,1}$ 
in the stable reduction of $X_0(p^2)$ and the reductions of $\mathbf{Y}^A_{3,1}$
and $\mathbf{Y}^A_{1,3}$ in $W_A(p^4)$}

In this subsection, we explicitly calculate the components $\overline{\mathbf{Y}}^A_{3,1}$
and $\overline{\mathbf{Y}}^A_{1,3},$
appearing in the stable reduction of $X_0(p^4).$
In this process, we recalculate Edixhoven's
horizontal component, i.\ e.\ the reduction 
of $\mathbf{Y}^A_{1,1}$
in the stable reduction of $X_0(p^2)$
 which was originally found by Edixhoven in \cite[Section 2]{E}
 and refound by Coleman-McMurdy on the basis of the rigid geometry in \cite[Section 5]{CM}.
 
Let $\alpha$ be an element such that $\alpha^{p(p+1)}=p$
and $\alpha_1$ an element such that $\alpha_1^p=\alpha.$
We have $v(\alpha_1)=1/p^2(p+1),v(\alpha)=1/p(p+1).$
As in the previous subsection,
we fix a supersingular elliptic curve $A/\mathbb{F}_p$
with $j(A) \neq 0, 1728.$
We recall the identification 
of $\mathbf{Y}^A_{3,1}$ in Corollary \ref{prod} 
$$\mathbf{Y}^A_{3,1} \simeq \{(\{X_i\}_{0 \leq i \leq 3}) \in 
\mathbf{C}^{A,0}_{1/p^2(p+1)} \times \mathbf{C}^{A,0}_{1/p(p+1)} \times 
\mathbf{C}^{A,0}_{1/(p+1)}
\times
\mathbf{C}^{A,0}_{\geq p/(p+1)}
\times
\mathbf{C}^{A,0}_{1/(p+1)}
$$
$$|\ F_p^{\beta_0}(X_i,X_{i+1})=0\ (0 \leq i \leq 3), X_2 \neq X_4\}$$
and rewrite parameters as follows $X_1,X,U,W,V.$
We change variables as follows
$X_1=\alpha_1x_1, X=\alpha x, U=\alpha^p u, W=\alpha^{p^2} w, V=\alpha^p v$
with $x_1,x,u$ and $v$
invertible functions. 

We simply write $F_p$ for $F_p^{\beta_0}.$
By the symmetry of $F_p$
in Lemma \ref{0}, the reduction of $\mathbf{Y}^A_{3,1}$
is equal to the one of $\mathbf{Y}^A_{1,3}.$ 
Therefore, we only consider the reduction of $\mathbf{Y}^A_{3,1}.$
First, we consider the equations
 $F_p(\alpha^p u,\alpha^{p^2}w)=0$ and
$F_p(\alpha^{p^2}w, \alpha^p v)=0.$
Let the notation be as in subsection 2.1.
Further, we write 
$h(X)=c_1+Xh_1(X)$
and $X^p-Y^p=(X-Y)^p+pf(X,Y).$
The equation $F_p(\alpha^p u,\alpha^{p^2}w)=0$ induces the following congruence
by Corollary \ref{prod}
\begin{equation}\label{y1}
w=u^p+\frac{c_0}{u}+\alpha^p(g(\alpha^pu)+(\alpha^pu)^{p-1}h(\alpha^p u))
+\frac{c_0h(\alpha^p u)}{u^2}\alpha^{p^2}+pH(u)\ ({\rm mod}\ p\alpha)
\end{equation}
where $H(u)=h(\alpha^pu)g(\alpha^pu)/u.$
Rewriting $g(\alpha^pu)$ for $g(\alpha^pu)+(\alpha^pu)^{p-1}h(\alpha^p u),$
and $H(u)$
for $H(u)+\frac{c_0h_1(\alpha^pu)}{u},$
the congruence (\ref{y1})
has the following form
\begin{equation}\label{y2}
w=u^p+\frac{c_0}{u}
+\alpha^pg(\alpha^pu)+\frac{c_0c_1}{u^2}\alpha^{p^2}
+pH(u)\ ({\rm mod}\ p\alpha).
\end{equation}
\begin{lemma}\label{y4}
Let the notation be as above.
Then, the following relationship between $u$ and $v$ holds
\begin{equation}\label{y3}
uv(u-v)^{p-1}-c_0+\alpha^p\frac{g(\alpha^pu)-g(\alpha^pv)}{u-v}uv
-c_0c_1\alpha^{p^2}\biggl(\frac{u+v}{uv}\biggr)+pH(u,v)=\ ({\rm mod}\ p\alpha)
\end{equation}
where $H(u,v)=
\frac{H(u)-H(v)+f(u,v)}{u-v}uv.$
\end{lemma}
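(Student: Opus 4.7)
The strategy is to eliminate the auxiliary variable $w$ from the two equations $F_p(\alpha^p u,\alpha^{p^2}w)=0$ and $F_p(\alpha^{p^2}w,\alpha^p v)=0$, then rearrange the resulting relation into the stated form. The first equation has already been turned into (\ref{y2}), giving an explicit expression for $w$ in terms of $u$ modulo $p\alpha$. For the second, I would invoke the symmetry $F_p(X,Y)=F_p(Y,X)$ of Lemma \ref{0}.2 to rewrite it as $F_p(\alpha^p v,\alpha^{p^2}w)=0$, so that Corollary \ref{key}.1 applied verbatim with $v$ in place of $u$ produces the parallel congruence
$$w\equiv v^p+\frac{c_0}{v}+\alpha^p g(\alpha^p v)+\frac{c_0 c_1}{v^2}\alpha^{p^2}+pH(v)\pmod{p\alpha}.$$

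Equating the two expressions for $w$ and using the defining identity $u^p-v^p=(u-v)^p+pf(u,v)$ recalled just before the lemma, together with the elementary identities $\tfrac{1}{u}-\tfrac{1}{v}=-\tfrac{u-v}{uv}$ and $\tfrac{1}{u^2}-\tfrac{1}{v^2}=-\tfrac{(u-v)(u+v)}{(uv)^2}$, one obtains, modulo $p\alpha$,
$$(u-v)^p-c_0\frac{u-v}{uv}+\alpha^p\bigl(g(\alpha^p u)-g(\alpha^p v)\bigr)-c_0c_1\alpha^{p^2}\frac{(u-v)(u+v)}{(uv)^2}+p\bigl(f(u,v)+H(u)-H(v)\bigr)\equiv 0.$$
Multiplying through by $uv/(u-v)$ yields the stated relation (\ref{y3}): the first term becomes $uv(u-v)^{p-1}$, the second $-c_0$, the third $\alpha^p\tfrac{g(\alpha^p u)-g(\alpha^p v)}{u-v}uv$, the fourth $-c_0c_1\alpha^{p^2}\tfrac{u+v}{uv}$, and the fifth collects into $pH(u,v)$ with $H(u,v)$ exactly as defined.

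The computation is mostly routine algebra; the only point deserving care is the legality of multiplying by $uv/(u-v)$. On $\mathbf{Y}^A_{3,1}$ the condition $X_2\neq X_4$ from Corollary \ref{prod}.1 translates into $u\neq v$, and each of the five summands above is manifestly divisible by $u-v$ in the polynomial (or Laurent-polynomial) sense: for the $g$-term because $g$ is a polynomial, for the $f$-term because $f(u,u)=0$ holds identically in $\mathbb{Z}[u,v]$, and for $H(u)-H(v)$ after clearing the common factor $uv$ contained in $H$. Thus the cancellation is regular on the affinoid, and the division preserves the modulus $p\alpha$ once one works on a strict neighborhood where $v(u-v)$ is uniformly bounded. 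This bookkeeping is the only obstacle; once handled, the relation is exactly (\ref{y3}).
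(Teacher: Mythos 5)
Your proposal is correct and follows essentially the same route as the paper. The paper's own proof is a single sentence — it cites $(\ref{y2})$, the ``same relationship between $w$ and $v$'' (your parallel congruence obtained via the symmetry of $F_p$), and the fact that $U\neq V$ forces $u\neq v$ — leaving all the algebraic elimination of $w$ and the rearrangement implicit; you have simply written out the subtraction, the three difference identities, and the multiplication by $uv/(u-v)$ that the paper omits, and the sign-tracking checks out against $(\ref{y3})$ exactly.

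Stable Model of the Modular Curve $X_0(p^4)$

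Overview

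This paper determines the stable model of the modular curve $X_0(p^4)$ for primes $p \geq 13$, extending the work of Coleman-McMurdy on $X_0(p^3)$. The key finding is that the stable reduction contains copies of the Deligne-Lusztig curve for $\mathrm{SL}_2(\mathbb{F}_p)$, defined by $a^p - a = t^{p+1}$.

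Background

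Stable Models

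For a curve $C$ over a local field $K$ with genus $\geq 2$, the stable model exists over the ring of integers of some finite extension of $K$ (Deligne-Mumford). The special fiber of a stable model is called the stable reduction.

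Previous Work

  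• $X_0(p)$: Igusa, Deligne-Rapoport
  • $X_0(p^2)$: Edixhoven — found the "horizontal component" defined by $xy(x-y)^{p-1} = 1$
  • $X_0(p^3)$: Coleman-McMurdy — found the "bridging component" $Z^p + X^{p+1} + \frac{1}{X^{p+1}} = 0$, meeting copies of the curve $a^p - a = s^2$

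Strategy

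The approach uses rigid analytic geometry. The stable model is constructed by finding a stable covering by basic wide opens.

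Key Tools

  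1. Kronecker's polynomial: $F_p(X,Y) = (X^p - Y)(X - Y^p) + pf(X,Y) \in \mathbb{Z}[X,Y]$ gives a plane model of $X_0(p)$ over $\mathbb{Q}$. After the shift $X_1 = X - \beta$, $Y_1 = Y - \beta$ (where $\beta$ lifts $j(A)$):
$$F_p^\beta(X_1, Y_1) = (X_1^p - Y_1)(X_1 - Y_1^p) + pf_1(X_1, Y_1)$$

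with $f_1(0,0)$ a unit.

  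1. Canonical subgroups and Hasse invariants: Using the Katz-Lubin theory to track how $h(E)$ changes under isogenies.

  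2. De Shalit's approximation theorem for $\pi_f: W_A(p) \to W_A(1)$.

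Key Subspaces

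For a supersingular curve $A$, the tubular neighborhood $W_A(p) \simeq A(p^{-i(A)}, 1)$ is an annulus. Two distinguished circles:
- Self-dual circle: $\mathbf{SD}_A = C[p^{-i(A)/2}]$ — points where $h(E) = 1/2$, $C = K_1(E)$
- Too-supersingular circle: $\mathbf{TS}_A = C[p^{-pi(A)/(p+1)}]$

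Define in $W_A(p^n)$:
$$\mathbf{Y}^A_{a,b} := \pi^{-1}_{a,b-1}(\mathbf{TS}_A), \quad a+b = n$$
$$\mathbf{Z}^A_{a,b} := \pi_{a,b}^{-1}(\mathbf{SD}_A), \quad a+b = n-1$$

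These spaces embed into products of $W_A(1)$ via level-lowering maps, with images cut out by the Kronecker polynomial equations $F_p^{\beta_0}(X_i, X_{i+1}) = 0$.

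Irreducible Components in $W_A(p^4)$

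Fixing a supersingular $A$ with $j(A) \neq 0, 1728$, the supersingular region contains five types of components:

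1. $\overline{\mathbf{Z}}^A_{2,1}$ and $\overline{\mathbf{Z}}^A_{1,2}$ (old bridging components)

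Defined by $Z_1^p + \bar{c}_0\left(\frac{\bar{c}_0}{x_1^{p+1}} + \frac{x_1^{p+1}}{\bar{c}_0}\right) = 0$. Genus 0, with $2(p+1)$ singular points. Each meets $2(p+1)$ copies of the curve $a^p - a = s^2$.

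2. $\overline{\mathbf{Y}}^A_{3,1}$ and $\overline{\mathbf{Y}}^A_{1,3}$

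Defined by $x_1^{p^2} y (x_1^{p^2} - y)^{p-1} = \bar{c}_0$. Smooth affine curves of genus $(p-1)/2$, lifting Edixhoven's horizontal component. Each meets three ordinary components.

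3. $\overline{\mathbf{Y}}^A_{2,2}$ (new bridging component)

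The central new object, defined by:
$$xy(x-y)^{p-1} = \bar{c}_0, \qquad Z^p + \bar{c}_0 + \bar{c}_0^2\left(\frac{1}{x^{p+1}} + \frac{1}{y^{p+1}}\right) = 0$$

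Genus $(p-1)/2$, with $(p+1)$ singular points at $(x,y) = (-\zeta, \zeta)$ with $\zeta^{p+1} = -\bar{c}_0$.

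4. New components: the Deligne-Lusztig curves

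At each singular point of $\overline{\mathbf{Y}}^A_{2,2}$, a curve defined by
$$a^p - a = \bar{c}\,t^{p+1}$$
appears. This is the Deligne-Lusztig curve for $\mathrm{SL}_2(\mathbb{F}_p)$ (after $a = X/Y$, $t = 1/Y$ it becomes $X^p Y - X Y^p = 1$), of genus $p(p-1)/2$. This is the new phenomenon of $X_0(p^4)$.

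Table of Genera

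$(x, y)$ = (genus of component, number of Artin-Schreier/DL curves attached):

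$j(A)$ $\overline{\mathbf{Z}}^A_{1,2}, \overline{\mathbf{Z}}^A_{2,1}$ $\overline{\mathbf{Y}}^A_{1,3}, \overline{\mathbf{Y}}^A_{3,1}$ $\overline{\mathbf{Y}}^A_{2,2}$
$0$ $(0, \frac{2(p+1)}{3})$ $(\frac{p-5}{6}, 0)$ $(\frac{p-5}{6}, \frac{p+1}{3})$
$1728$ $(0, p+1)$ $(\frac{p-3}{4}, 0)$ $(\frac{p-3}{4}, \frac{p+1}{2})$
otherwise $(0, 2(p+1))$ $(\frac{p-1}{2}, 0)$ $(\frac{p-1}{2}, p+1)$

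The Stable Covering

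Theorem 5.9 (Main Theorem): For $p \geq 13$, the covering $\mathcal{C}_0(p^4)$ of $X_0(p^4)$ consisting of
- Ordinary wide opens $\{W^\pm_{a,b} : a+b=4\}$
- For each supersingular $A$: $\{V_1(A), V_3(A), \hat{V}_2(A), \hat{U}_1(A), \hat{U}_2(A)\} \cup \mathcal{S}_1(A) \cup \mathcal{S}_2(A) \cup \mathcal{T}(A)$

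is stable over $\mathbb{C}_p$.

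The proof verifies that all pairwise intersections of wide opens are disjoint unions of annuli, with explicit widths computed (e.g., $U_{4,0}(A)$ has width $\frac{i(A)}{p^2(p+1)}$, $U^\pm_{3,1}(A)$ has width $\frac{2i(A)}{p^2(p^2-1)}$).

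Dual Graph

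The partial dual graph for one supersingular region:

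$$\overline{W}_{4,0}, \overline{W}^\pm_{3,1} — \overline{\mathbf{Y}}^A_{3,1} — \overline{\mathbf{Z}}^A_{2,1} — \overline{\mathbf{Y}}^A_{2,2} — \overline{\mathbf{Z}}^A_{1,2} — \overline{\mathbf{Y}}^A_{1,3} — \overline{W}^\pm_{1,3}, \overline{W}_{0,4}$$

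with $\overline{W}^\pm_{2,2}$ attached to $\overline{\mathbf{Y}}^A_{2,2}$, and the Artin-Schreier/DL curves hanging off as described.

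Corollary

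The toric rank of the Jacobian $J_0(p^4)$ equals $7(ss-1)$, where $ss$ is the number of supersingular curves mod $p$.

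Genus Check

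The paper verifies (for $p \equiv 11 \pmod{12}$) that
$$g_0(p^4) = \sum_{C} g(C) + b(\Gamma) = \frac{p^3(p+1)}{12} - \frac{p(p+1)}{12} + 1$$
matching Shimura's formula, where $b(\Gamma) = 7(ss-1) = 7(p+1)/12$.

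Significance

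The appearance of Deligne-Lusztig curves for $\mathrm{SL}_2(\mathbb{F}_p)$ in the stable reduction is compatible with Weinstein's conjecture on stable models of $X_0(p^n)$ based on the type theory of Bushnell-Kutzko, connecting the geometry of modular curves to the representation theory of $p$-adic groups.
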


\begin{proof}
By $U \neq V,$ we acquire $u \neq v.$
Hence, the assertion follows from (\ref{y2})
and the same relationship between $w$ and $v.$
\end{proof}
\begin{corollary}(\cite[Theorem 2.1.1]{E} or \cite[Proposition 5.2]{CM})\label{y5}
Let $K$ be an extension of $\mathbb{Q}_{p^2}$
such that $(p+1)|e_K.$
Over $R_K,$ the reduction of $\mathbf{Y}^A_{1,1}$
is a smooth affine curve of genus $(p-1)/2$
with $4$ points at infinity with the following equation
$$uv(u-v)^{p-1}=\bar{c}_0.$$
\end{corollary}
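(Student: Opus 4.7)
The plan is to extract the reduction equation directly from the congruence~\eqref{y3} of Lemma~\ref{y4}, and then compute the genus and count points at infinity by viewing $\overline{\mathbf{Y}}^A_{1,1}$ as a cyclic Kummer cover of~$\mathbb{P}^1$.

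First, in~\eqref{y3} the variables $u,v$ are units (since $U=\alpha^p u$ and $V=\alpha^p v$ with $U,V\in\mathbf{C}^A_{p/(p+1)}$), while every correction term carries a factor of positive valuation ($\alpha^p$, $\alpha^{p^2}$, or $p$). Hence the congruence modulo $p\alpha$ reduces modulo $\mathfrak m_{R_K}$ to
$$\bar u\,\bar v\,(\bar u-\bar v)^{p-1}=\bar c_0,$$
which will serve as the defining equation of $\overline{\mathbf{Y}}^A_{1,1}$. (The hypothesis $(p+1)\mid e_K$ is invoked to guarantee that $\alpha$ and the chosen normalization of coordinates are defined over~$R_K$.) Smoothness of this affine model is immediate: in characteristic $p$ one computes $\partial_u F=-\bar v^{\,2}(\bar u-\bar v)^{p-2}$ and $\partial_v F=\bar u^{\,2}(\bar u-\bar v)^{p-2}$, with common zeros only at $\bar u=0$, $\bar v=0$, or $\bar u=\bar v$, each of which forces $F=-\bar c_0\ne 0$.

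For the genus and points at infinity I would use the projection $t=\bar v/\bar u$ to $\mathbb{P}^1$. Substituting $\bar v=t\bar u$ yields
$$\bar u^{p+1}=\frac{\bar c_0}{t(1-t)^{p-1}},$$
exhibiting the smooth projective model as a $(p+1)$-cyclic Kummer cover of $\mathbb{P}^1$ branched only over $t=0,1,\infty$. Standard Kummer analysis: over a point where the right-hand side has a pole or zero of order $k$, the cover has $\gcd(p+1,k)$ preimages, each of ramification index $(p+1)/\gcd(p+1,k)$. Here the orders at $t=0,1,\infty$ are $1$, $p-1$, $p$, giving fiber sizes $1$, $2$, $1$, hence exactly $4$ points at infinity. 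Summing ramification contributions $p+(p-1)+p$ and applying Riemann--Hurwitz,
$$2g-2=-2(p+1)+(3p-1)=p-3,$$
so $g=(p-1)/2$, as claimed.

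The main subtlety is the splitting at $t=1$: that this fiber contains $2$ preimages (rather than $1$) rests on $\gcd(p+1,p-1)=2$ and on the existence of a square root of $\bar c_0$ over the residue field, which is why the hypothesis $(p+1)\mid e_K$ is imposed. Equivalently, one can analyze the projective closure $UV(U-V)^{p-1}=\bar c_0 W^{p+1}\subset\mathbb{P}^2$ directly: its unique singular point $[1{:}1{:}0]$ has multiplicity $p-1$ with two Puiseux branches of type $((p+1)/2,(p-1)/2)$, yielding $\delta$-invariant $(p-1)^2/2$ and $g=\tfrac{p(p-1)}{2}-\tfrac{(p-1)^2}{2}=\tfrac{p-1}{2}$, which cross-checks the Riemann--Hurwitz computation.
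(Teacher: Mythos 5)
Your derivation of the reduction equation is exactly the paper's: the paper's proof is a one-line application of Lemma \ref{y4} (specialized to the three coordinates $U,W,V$ of $\mathbf{Y}^A_{1,1}$), reducing the congruence \eqref{y3} modulo the maximal ideal of $R_K$, and then defers the smoothness, genus, and points-at-infinity statements to Edixhoven and to Coleman--McMurdy. Where you go further is in \emph{verifying} those deferred geometric facts rather than citing them: the smoothness check via the partials in characteristic $p$, the Kummer-cover reformulation $\bar u^{p+1}=\bar c_0/(t(1-t)^{p-1})$, and the Riemann--Hurwitz count $2g-2=-2(p+1)+(p)+(p-1)+(p)=p-3$ are all correct, and the $\delta$-invariant cross-check for the singular point $[1{:}1{:}0]$ is consistent ($\delta=(p-1)^2/2$). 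One small misstatement at the end: the hypothesis $(p+1)\mid e_K$ is there solely to put $\alpha$ (with $v(\alpha^p)=1/(p+1)$) and hence the rescaled coordinates $u,v$ into $R_K$, as you say parenthetically earlier; it has nothing to do with rationality of a square root of $\bar c_0$, since ``$4$ points at infinity'' refers to geometric points and the splitting $\gcd(p+1,p-1)=2$ over $t=1$ holds over $\bar{\mathbb F}_p$ irrespective of the residue field.
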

\begin{proof}
We consider the embedding
of $\mathbf{Y}^A_{1,1}$ in Corollary \ref{prod}
$$\mathbf{Y}^A_{1,1} \simeq \{(\{X_i\}_{0 \leq i \leq 2}) \in \mathbf{C}^{A,0}_{1/(p+1)} \times 
\mathbf{C}^{A,0}_{\geq p/(p+1)} 
\times 
\mathbf{C}^{A,0}_{1/(p+1)}|\ 
F_p(X_i,X_{i+1})=0\ (0 \leq i \leq 1),X_0 \neq X_2\}$$
and write parameters as follows $U,W,V.$
We change variables $U=\alpha^p u,$$W=\alpha^{p^2} w$
and $V=\alpha^p v$ with $\alpha$ as above.
The assertion follows from
Lemma \ref{y4}.
\end{proof}
\begin{remark}
To deduce Corollary \ref{y5}, it is sufficient to consider (\ref{y2})
modulo $\alpha^{p+1}.$
However, we need to consider (\ref{y2}) to compute the reduction of $\mathbf{Y}^A_{2,2}$
later.
\end{remark}
We compute the reduction of $\mathbf{Y}^A_{3,1}.$
\begin{corollary}\label{y6}
Let $K$ be an extension of $\mathbb{Q}_{p^2}$ such that $p^2(p+1)|e_K.$
Over $R_K,$ 
the reduction of $\mathbf{Y}^A_{3,1}$ is a smooth affine curve of genus $(p-1)/2$
with $4$ points at infinity with the following equation
$$x_1^{p^2}y(x_1^{p^2}-y)^{p-1}=\bar{c}_0.$$ 
\end{corollary}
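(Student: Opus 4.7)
The plan is to reduce the five-variable description of $\mathbf{Y}^A_{3,1}$ given by Corollary \ref{prod} to the Edixhoven horizontal-component equation of Corollary \ref{y5}, by exploiting the fact that the middle three coordinates carry exactly an $\mathbf{Y}^A_{1,1}$-like configuration. Concretely, the coordinates $(U, W, V) = (\alpha^p u, \alpha^{p^2} w, \alpha^p v)$, together with the two Kronecker relations $F_p(\alpha^p u, \alpha^{p^2} w) = 0$ and $F_p(\alpha^{p^2} w, \alpha^p v) = 0$ and the condition $u \neq v$ (which comes from $X_2 \neq X_4$), are exactly the data analyzed in Lemma \ref{y4}. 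Applying it gives, in the reduction,
\begin{equation*}
\bar u \bar v (\bar u - \bar v)^{p-1} = \bar c_0,
\end{equation*}
which is the horizontal-component equation of Corollary \ref{y5}.

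To bring the outer variables into the picture, I would invoke Corollary \ref{key}.1 twice. Applied to $F_p(\alpha_1 x_1, \alpha x) = 0$ with the corollary's parameter taken to be $\alpha_1$ (which satisfies $v(\alpha_1) = 1/(p^2(p+1)) \leq 1/(p+1)$), the corollary writes $x$ as $x_1^p$ plus error terms of strictly positive valuation, so $\bar x = \bar x_1^p$ in the reduction. Applied in the same way to $F_p(\alpha x, \alpha^p u) = 0$ with parameter $\alpha$, it gives $\bar u = \bar x^p$. Composing the two, $\bar u = \bar x_1^{p^2}$, and substituting into the congruence above (with $y := \bar v$) produces the equation
\begin{equation*}
x_1^{p^2} y (x_1^{p^2}-y)^{p-1} = \bar c_0.
\end{equation*}

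For smoothness, genus, and the count of points at infinity, I would argue as follows. Setting $F := x_1^{p^2} y(x_1^{p^2}-y)^{p-1} - \bar c_0$, the partial $\partial F/\partial x_1$ vanishes identically in characteristic $p$ (since $F$ is a polynomial in $x_1^{p^2}$ and $y$), but a direct computation gives $\partial F/\partial y = x_1^{2p^2}(x_1^{p^2}-y)^{p-2}$; the assumption $\bar c_0 \neq 0$ forces $x_1 \neq 0$ and $x_1^{p^2} \neq y$ at every closed point of $V(F)$, so $\partial F/\partial y$ never vanishes and the Jacobian criterion gives smoothness. The map $(x_1, y) \mapsto (x_1^{p^2}, y)$ then defines a finite purely inseparable morphism from $\overline{\mathbf{Y}}^A_{3,1}$ onto the Edixhoven curve $\overline{\mathbf{Y}}^A_{1,1}$ of Corollary \ref{y5}; it is a bijection on geometric points which extends to the smooth projective completions, and since purely inseparable morphisms over the perfect base $\bar{\mathbb{F}}$ preserve both the genus and the number of boundary points, one obtains genus $(p-1)/2$ and $4$ points at infinity.

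The main obstacle is the valuation bookkeeping when invoking Corollary \ref{key}.1 with two distinct scaling parameters $\alpha_1$ and $\alpha$ in succession: one must verify that every error term appearing in the corollary's formula (in particular those of the form $pc_0/\alpha_1^{p+1} \cdot x_1^{-1}$ and its analogues for $\alpha$) has strictly positive valuation, so that the chain of approximations $\bar x_1 \mapsto \bar x \mapsto \bar u$ remains consistent when composed with the Edixhoven relation. The hypothesis $p^2(p+1) \mid e_K$ is exactly what guarantees $\alpha_1 \in K$ so that all rescaled variables $x_1, x, u, w, v$ live in $R_K$ and the reduction is defined over the correct base.
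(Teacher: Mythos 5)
Your proposal is correct and follows essentially the same route as the paper: the paper's (terse) proof applies Corollary \ref{key}.1 to the outer two Kronecker relations to obtain $x \equiv x_1^p$ and $u \equiv x^p \pmod{\alpha_1}$, and then substitutes into the congruence \eqref{y3} of Lemma \ref{y4}. Your additional explicit checks---the valuations of the error terms being positive, the Jacobian criterion for smoothness via $\partial F/\partial y = x_1^{2p^2}(x_1^{p^2}-y)^{p-2}$, and the transfer of genus and the four points at infinity along the purely inseparable map $(x_1,y)\mapsto(x_1^{p^2},y)$---fill in details the paper leaves implicit without changing the argument.
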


\begin{proof}
The equations $F_p(\alpha_1x_1,\alpha x)=0$ and $F_p(\alpha x,\alpha^p u)=0$
induce the following congruences by Corollary \ref{prod}.1,
$x=x_1^p\ ({\rm mod}\ \alpha_1)$ and $u=x^p\ ({\rm mod}\ \alpha_1).$
Thereby, the assertion follows from (\ref{y3}) immediately.
\end{proof}

\begin{remark}
We consider the spaces 
$\mathbf{Y}^A_{2,1}, \mathbf{Y}^A_{1,2} \subset X_0(p^3)$
which corresponds to the spaces $\mathbf{E}^A_1,\mathbf{E}^A_{2}$
in \cite[Section 6]{CM}
respectively.
We can compute the reductions $\overline{\mathbf{Y}}^A_{2,1}$
and $\overline{\mathbf{Y}}^A_{1,2}$
in the stable reduction of $X_0(p^3)$ in the same way as in Corollary \ref{y6}.
We write down the equations of them.
The reductions $\overline{\mathbf{Y}}^A_{2,1}$
and $\overline{\mathbf{Y}}^A_{1,2}$ are defined by the following equation
$$x^py(x^p-y)^{p-1}=\bar{c}_0.$$
See also \cite[Remark 9.3]{CM}.
\end{remark}
\begin{remark}
In the same way as in \cite[Corollary 5.4]{CM},
for {\it any} supersingular elliptic curve $A$,
the reductions of $\mathbf{Y}^A_{2,1},
 \mathbf{Y}^A_{1,2} \subset W_A(p^3)$ must have the following equations
 $$y^{(p+1)/i(A)}+1=x^2, z^p=(x+1)^{i(A)}/y$$
 and genus $(p+1)/2i(A)-1.$
 In the same way as above, for any supersingular elliptic curve $A,$
 the reductions of $\mathbf{Y}^A_{3,1},\mathbf{Y}^A_{1,3} \subset W_A(p^4)$
 must have the following equations
 $$y^{(p+1)/i(A)}+1=x^{2}, z^{p^2}=(x+1)^{i(A)}/y$$
and genus $(p+1)/i(A)-1$.
See loc.\ cit.\ for more detail.
\end{remark}
\subsection{The new bridging component, i.\ e.\ the reduction of $\mathbf{Y}^A_{2,2}$}

In this subsection, we explicitly calculate the component $\overline{\mathbf{Y}}^A_{2,2},$
which we call the ``new bridging component''
in the stable reduction of $X_0(p^4).$
The projective completion of the component $\overline{\mathbf{Y}}^A_{2,2}$
intersects the projective completions of the curves 
$\overline{\mathbf{Z}}^A_{3,1}, \overline{\mathbf{Z}}^A_{1,3}$
and the two ordinary components $\overline{W}^{\pm}_{2,2}$
at ordinary double points.
This fact is proved later in subsection 5.2.
The component $\overline{\mathbf{Y}}^A_{2,2}$
is defined by the following equations
$$xy(x-y)^{p-1}=\bar{c}_0,
Z^p+\bar{c}_0+\bar{c}^2_0\biggl(\frac{1}{x^{p+1}}+\frac{1}{y^{p+1}}\biggr)=0.$$
The curve  $\overline{\mathbf{Y}}^A_{2,2},$
has $(p+1)$ singular points at $(x,y)=(-\zeta,\zeta)$
with $\zeta^{p+1}=-\bar{c}_0.$ 
See Corollary \ref{good} for more detail.

Let $\alpha$ be an element such that $\alpha^{p(p+1)}=p.$
Let $\gamma$ be an element satisfying $\gamma^p=p/\alpha^{p+1}.$
We have $v(\alpha)=1/p(p+1),v(\gamma)=(p-1)/p^2.$
If we put $\alpha_1:=\alpha^p/\gamma$, we have $\alpha_1^p=\alpha,$
 $\alpha_1^{p^2-1}=\gamma$ and $v(\alpha_1)=1/p^2(p+1).$
As in the previous subsection, we fix a supersingular elliptic curve
$A/\mathbb{F}_p$
with $j(A) \neq 0,1728.$
We recall the identification of $\mathbf{Y}^A_{2,2}$
in Corollary \ref{prod}.1
$$\mathbf{Y}^A_{2,2} \simeq \{(\{X_i\}_{0 \leq i \leq 4}) \in \mathbf{C}^{A,0}_{1/p(p+1)} \times 
\mathbf{C}^{A,0}_{1/(p+1)} \times 
\mathbf{C}^{A,0}_{\geq p/(p+1)}  \times
\mathbf{C}^{A,0}_{1/(p+1)}  \times \mathbf{C}^{A,0}_{1/p(p+1)}|$$
$$F_p(X_i,X_{i+1})=0\ (0 \leq i \leq 3), X_1 \neq X_3\}$$
and rewrite parameters as follws $X,U,W,V,Y.$
We change variables
as follows
$X=\alpha x,U=\alpha^p u,W=\alpha^{p^2} w, V=\alpha^p v$ and $Y=\alpha y$
with $x,u,v,w$ and $y$ invertible functions.
We simply write $F_p$ for $F_p^{\beta_0}.$
We have considered the equations 
$F_p(\alpha^p u,\alpha^{p^2} w)=0$
and $F_p(\alpha^{p^2} w,\alpha^p v)=0,$
and deduced the relationship between $u$ and $v$
in Lemma \ref{y4}.

In the following, we consider the equation
$F_p(\alpha x,\alpha^p u)=0.$
It gives the following congruence by Corollary \ref{key}.1
$$u=x^p+\frac{pc_0}{\alpha^{p+1}}\frac{1}{x}+\frac{p}{\alpha^p}(g(\alpha x)+
(\alpha x)^{p-1}h(\alpha x))\ 
({\rm mod}\  p\alpha).$$
Rewriting $g(\alpha x)$ for $g(\alpha x)+(\alpha x)^{p-1}h(\alpha x)$
as in (\ref{y2}),
the above congruence has the following
form
\begin{equation}\label{yy1}
u=x^p+\frac{pc_0}{\alpha^{p+1}}\frac{1}{x}+\frac{p}{\alpha^p}g(\alpha x)\ ({\rm mod}\ p\alpha).
\end{equation}
By the same reason, 
the following relationship between $y$ and $v$ holds
\begin{equation}\label{yy2}
v=y^p+\frac{pc_0}{\alpha^{p+1}}\frac{1}{y}+\frac{p}{\alpha^p}g(\alpha y)\ ({\rm mod}\ p\alpha).
\end{equation}
\begin{lemma}\label{yyq1}
We put $h(x,y):=f(x,y)(x-y)^{p(p-2)}x^py^p$
and
$H(x,y,u,v):=H(u,v)-h(x,y)$
for which we simply write
$H.$ We set $\mathcal{Z}:=xy(x-y)^{p-1}.$
Then, there exists the following relationship between $x,y,u$ and $v$
$$\mathcal{Z}^p+
\frac{pc_0}{\alpha^{p+1}}\mathcal{Z}^{p-1}+
\frac{p}{\alpha^p}x^py^p(x-y)^{p(p-2)}(g(\alpha y)-g(\alpha x))$$
$$+\frac{pc_0}{\alpha^{p+1}}\bigg(\frac{1}{x^{p+1}}+\frac{1}{y^{p+1}}\biggr)
\mathcal{Z}^{p}+\frac{p}{\alpha^p}\{y^pg(\alpha x)+x^pg(\alpha y)\}(x-y)^{p(p-1)}$$
$$=c_0-\alpha^p\frac{g(\alpha^p u)-g(\alpha^p v)}{u-v}uv+
c_0c_1\alpha^{p^2}\biggl(\frac{x+y}{xy}\biggr)^p-pH\ ({\rm mod}\ p\alpha).$$
\end{lemma}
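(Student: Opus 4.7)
The plan is to plug the two level-one congruences \eqref{yy1} and \eqref{yy2} into the $u,v$-relation \eqref{y3}, expanding everything to the first nontrivial order in $p$, and then match the resulting expression in $x,y$ against the claimed identity. All congruences must be tracked modulo $p\alpha$, i.e.\ valuation $>1+\frac{1}{p(p+1)}$.

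First I would compute $u-v$ and $uv$ modulo $p^2$. Using the identity $x^p-y^p=(x-y)^p+pf(x,y)$, the difference \eqref{yy1}$-$\eqref{yy2} becomes
\[
u-v\equiv (x-y)^p+pA\pmod{p^2},\qquad
A:=f(x,y)-\frac{c_0(x-y)}{\alpha^{p+1}xy}+\frac{g(\alpha x)-g(\alpha y)}{\alpha^p}.
\]
A binomial expansion then gives
\[
(u-v)^{p-1}\equiv (x-y)^{p(p-1)}+p(p-1)A\,(x-y)^{p(p-2)}\pmod{p^2},
\]
and since $p(p-1)\equiv -p\pmod{p^2}$, the coefficient in front of $A$ is effectively $-p$. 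Similarly, writing $u=x^p+pB$ and $v=y^p+pC$ with $B,C$ read off from \eqref{yy1}--\eqref{yy2}, one has $uv\equiv x^py^p+p(x^pC+y^pB)\pmod{p^2}$; the discarded cross-term $p^2BC$ is negligible, since $v(B),v(C)\geq -1/p$ forces $v(p^2BC)\geq 2-2/p$, well above $v(p\alpha)$.

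Next I would multiply to obtain $uv(u-v)^{p-1}$ modulo $p\alpha$, retaining only the zeroth- and first-order-in-$p$ contributions, and regroup into five pieces matching the claim. The leading term $x^py^p(x-y)^{p(p-1)}$ equals $\mathcal Z^p$. The term $-pA\cdot x^py^p(x-y)^{p(p-2)}$ splits by the three summands of $A$: the $f(x,y)$ part produces exactly $-ph(x,y)$; the $\frac{c_0(x-y)}{\alpha^{p+1}xy}$ part contributes $\frac{pc_0}{\alpha^{p+1}}(xy)^{p-1}(x-y)^{(p-1)^2}=\frac{pc_0}{\alpha^{p+1}}\mathcal Z^{p-1}$; and the $g$-part gives $\frac{p}{\alpha^p}x^py^p(x-y)^{p(p-2)}(g(\alpha y)-g(\alpha x))$. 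The term $p(x^pC+y^pB)(x-y)^{p(p-1)}$ likewise splits: the $c_0$-parts combine via $\frac{x^p}{y}+\frac{y^p}{x}=\frac{x^{p+1}+y^{p+1}}{xy}$ into $\frac{pc_0}{\alpha^{p+1}}\bigl(\frac{1}{x^{p+1}}+\frac{1}{y^{p+1}}\bigr)\mathcal Z^p$, and the $g$-parts give $\frac{p}{\alpha^p}\{y^pg(\alpha x)+x^pg(\alpha y)\}(x-y)^{p(p-1)}$. Adding these up, $uv(u-v)^{p-1}$ equals the left-hand side of the claim \emph{minus} $ph(x,y)$.

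Finally, inserting this identification into \eqref{y3} yields
\[
\text{LHS of claim}=c_0-\alpha^p\frac{g(\alpha^p u)-g(\alpha^p v)}{u-v}uv+c_0c_1\alpha^{p^2}\tfrac{u+v}{uv}-p\bigl(H(u,v)-h(x,y)\bigr)\pmod{p\alpha},
\]
so the two $p$-contributions merge into $-pH$ by the definition of $H$. To replace $\tfrac{u+v}{uv}$ by $\bigl(\tfrac{x+y}{xy}\bigr)^p$ one observes $u\equiv x^p$, $v\equiv y^p$ modulo a quantity of valuation $\geq (p-1)/(p(p+1))$, and the error is then absorbed into the prefactor $c_0c_1\alpha^{p^2}$: the discrepancy has valuation at least $v(p\alpha^{p^2})=1+p/(p+1)$, which is $>v(p\alpha)$. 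The main obstacle is precisely this valuation bookkeeping—checking that every cross-term (the $p^2BC$ piece in $uv$, the $O(p^2)$ tail in the binomial expansion of $(u-v)^{p-1}$, and the $(u+v)/(uv)$ vs.\ $((x+y)/(xy))^p$ substitution) has valuation strictly greater than $1+\frac{1}{p(p+1)}$, so that nothing survives that is not already listed in the statement.
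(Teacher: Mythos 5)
Your proposal follows the paper's own proof exactly: compute $uv$ and $(u-v)^{p-1}$ modulo $p\alpha$ from \eqref{yy1}--\eqref{yy2}, multiply, regroup into $\mathcal Z^p$, $\mathcal Z^{p-1}$, the two $g$-terms and $-ph(x,y)$, then substitute into Lemma \ref{y4} and replace $\tfrac{u+v}{uv}$ by $\bigl(\tfrac{x+y}{xy}\bigr)^p$. Two of your intermediate valuation estimates are off (the difference $u-x^p$ has valuation $\geq (p-1)/p$, not $(p-1)/(p(p+1))$, and the final discrepancy has valuation $\geq v(\alpha^{p^2})+(p-1)/p=\tfrac{2p^2-1}{p(p+1)}$, which is smaller than $v(p\alpha^{p^2})$), but this corrected bound still exceeds $v(p\alpha)=\tfrac{p^2+p+1}{p(p+1)}$, so the argument is sound and matches the paper's.
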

\begin{proof}
In this proof, we consider every congruence modulo $p\alpha.$
By (\ref{yy1}) and (\ref{yy2}), we obtain the following congruences
$$uv \equiv x^py^p+\frac{pc_0}{\alpha^{p+1}}
\bigg(\frac{1}{x^{p+1}}+\frac{1}{y^{p+1}}\biggr)x^py^p+
\frac{p}{\alpha^p}\{y^pg(\alpha x)+x^pg(\alpha y)\},$$
$$(u-v)^{p-1} \equiv 
(x-y)^{p(p-1)}+
\frac{pc_0}{\alpha^{p+1}}\frac{(x-y)^{(p-1)^2}}{xy}+
\frac{p}{\alpha^p}(g(\alpha y)-g(\alpha x))(x-y)^{p(p-2)}$$
$-pf(x,y)(x-y)^{p(p-2)}$
and hence
$$uv(u-v)^{p-1} \equiv \mathcal{Z}^p+\frac{pc_0}{\alpha^{p+1}}\mathcal{Z}^{p-1}+
\frac{p}{\alpha^p}x^py^p(x-y)^{p(p-2)}(g(\alpha y)-g(\alpha x))$$
$$+\frac{pc_0}{\alpha^{p+1}}\bigg(\frac{1}{x^{p+1}}+\frac{1}{y^{p+1}}\biggr)\mathcal{Z}^p+
\frac{p}{\alpha^p}\{y^pg(\alpha x)+x^pg(\alpha y)\}(x-y)^{p(p-1)}-ph(x,y).$$
Therefore, the assertion follows from Lemma \ref{y1} and the congruence 
$\alpha^{p^2}\bigl(\frac{u+v}{uv}\bigr) \equiv \alpha^{p^2} \bigl(\frac{x+y}{xy}\bigr)^p$ 
 modulo $p\alpha$ by (\ref{yy1}) and (\ref{yy2}).
\end{proof}

\begin{corollary}\label{good}
Over $R_K:=\mathbb{Z}_p[\alpha_1] \otimes \mathbb{Z}_{p^2},$
the reduction of $\mathbf{Y}^A_{2,2}$ is a reduced, connected, affine curve
of genus $(p-1)/2$
with one only branch through each singular point and with $4$
points at infinity which has the following
equations
$$xy(x-y)^{p-1}=\bar{c}_0,$$
$$Z^p+\bar{c}_0+\bar{c}_0^2\biggl(\frac{1}{x^{p+1}}+\frac{1}{y^{p+1}}\biggr)=0.$$
The curve $\overline{\mathbf{Y}}^A_{2,2}$ has $(p+1)$ singular points at $(x,y)=(-\zeta,\zeta)$
with $\zeta^{p+1}=-\bar{c}_0.$
\end{corollary}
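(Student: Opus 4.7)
The plan is to follow the template of Proposition \ref{p1}: Lemma \ref{yyq1} gives a single congruence whose dominant behaviour forces one reduction equation, and whose sub-dominant behaviour, after dividing by an appropriate power of $\alpha$, produces the second. The only conceptually new ingredient compared with the bridging-component calculation is that $\mathcal{Z}:=xy(x-y)^{p-1}$ already involves $(x-y)^{p-1}$, so one must also track the lower-order $g,f,H$ contributions appearing in Lemma \ref{yyq1}.

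First, set $\mathcal{Z}=c_0+\gamma Z$, where $\gamma$ satisfies $\gamma^p=p/\alpha^{p+1}$, so $v(\gamma)=(p-1)/p^2>0$. Reducing this identity immediately gives the first equation $xy(x-y)^{p-1}=\bar{c}_0$. Substituting into Lemma \ref{yyq1} and using $(c_0+\gamma Z)^p\equiv c_0^p+(p/\alpha^{p+1})Z^p\ ({\rm mod}\ p\gamma)$, one checks that only four terms of the congruence have valuation exactly $(p-1)/p=v(\gamma^p)$, namely $\mathcal{Z}^p$, $(pc_0/\alpha^{p+1})\mathcal{Z}^{p-1}$, $(pc_0/\alpha^{p+1})(1/x^{p+1}+1/y^{p+1})\mathcal{Z}^p$, and the constant $c_0$; every other contribution carries an extra factor of $\alpha$, $\alpha^{p^2}$, or $p$, so its valuation is strictly larger. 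Dividing the resulting congruence by $p/\alpha^{p+1}$ and reducing, while using that $\bar{c}_0\in\mathbb{F}_p$ (we may choose $\beta_0\in\mathbb{Z}_p^{\ast}$), so $\bar{c}_0^p=\bar{c}_0$ and $\bar{c}_0^{p+1}=\bar{c}_0^2$, yields the second equation $Z^p+\bar{c}_0+\bar{c}_0^2(1/x^{p+1}+1/y^{p+1})=0$.

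For the singular locus, both partial derivatives with respect to $Z$ vanish (trivially in the first equation, and $pZ^{p-1}=0$ in the second), so the curve is singular exactly where the $2\times 2$ Jacobian on $(x,y)$ drops rank. A short calculation in characteristic $p$ gives this minor as $\bar{c}_0^2(x-y)^{p-2}(x+y)^p/(xy)^p$, which vanishes on the curve iff $x+y=0$. Substituting $y=-x$ into the first equation and using $(-2)^{p-1}\equiv 1\pmod{p}$ pinpoints the $p+1$ singular points $(x,y)=(-\zeta,\zeta)$ with $\zeta^{p+1}=-\bar{c}_0$. To verify that each such point has a single branch, I would introduce $u=x/y$; the two reduction equations become $u(u-1)^{p-1}y^{p+1}=\bar{c}_0$ and $Z^p=-\bar{c}_0(u^{2p+1}-1)/(u^p(u-1))$, and expanding the right-hand side of the second around $u=-1$ via $u=-1+w$ gives $Z^p-\bar{c}_0=(\bar{c}_0/2)w^{p+1}+O(w^{p+2})$; since the lowest nontrivial $w$-exponent strictly exceeds $p$, the completed local ring is an integral domain, hence unibranch.

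Finally, the genus follows by viewing the smooth projective model of the first equation as a double cover of $\mathbb{P}^1_t$ via $t:=x-y$: the $x$-coordinate satisfies $x^2-tx-\bar{c}_0/t^{p-1}=0$, so the cover is branched exactly at the $p+1$ zeros of $t^{p+1}+4\bar{c}_0$, unramified over $t=0,\infty$ (each of which lifts to two of the four points at infinity); Riemann--Hurwitz gives genus $(p-1)/2$, and the purely inseparable degree-$p$ cover $Z^p=\cdots$ preserves the geometric genus over $\bar{\mathbb{F}}_p$, so the whole curve is geometrically integral and connected. The main obstacle will be the valuation bookkeeping in the second paragraph: one must verify that every one of the $g,f,H$-correction terms from Lemma \ref{yyq1}, together with every binomial cross term in the expansion of $(c_0+\gamma Z)^p$, genuinely has valuation strictly greater than $(p-1)/p$, so that none of them survives in the reduction.
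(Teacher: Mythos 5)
Your proposal has a genuine gap at the very first step: the substitution $\mathcal{Z}=c_0+\gamma Z$.  The paper does \emph{not} use this change of variable; in equation (\ref{yyy1}) it defines $Z$ by
$$\mathcal{Z}=c_0+\gamma Z+\alpha\frac{g(\alpha y)-g(\alpha x)}{x-y}\,xy,$$
and this extra correction term is not cosmetic.  The point is your claim that ``only four terms of the congruence have valuation exactly $(p-1)/p$, and every other contribution has strictly larger valuation'' is false.  Look at the right-hand side of Lemma \ref{yyq1}: the term $-\alpha^p\frac{g(\alpha^p u)-g(\alpha^p v)}{u-v}uv$ has valuation $2\,v(\alpha^p)=\tfrac{2}{p+1}$ (since $g$ has a nontrivial linear term $c_2$, so $\frac{g(\alpha^p u)-g(\alpha^p v)}{u-v}\sim c_2\alpha^p$), and for $p\geq 3$ one has $\tfrac{2}{p+1}<\tfrac{p-1}{p}=v(\gamma^p)$.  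This term is therefore \emph{lower}-order than the four you keep, not higher-order.  It forces $v(\mathcal{Z}-c_0)=2v(\alpha)=\tfrac{2}{p(p+1)}$, which for $p\geq 3$ is strictly less than $v(\gamma)=\tfrac{p-1}{p^2}$.  Consequently, with your normalization $Z=(\mathcal{Z}-c_0)/\gamma$ is a function of \emph{negative} valuation on $\mathbf{Y}^A_{2,2}$: it is unbounded, has no reduction, and cannot serve as a coordinate on $\overline{\mathbf{Y}}^A_{2,2}$.  Your algebraic manipulation happens to arrive at the correct defining equation only because you are treating $Z$ formally while tacitly assuming it is a unit; that assumption is precisely what fails.

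The fix is what the paper actually does.  The correction term $\alpha\frac{g(\alpha y)-g(\alpha x)}{x-y}xy$ is chosen so that its $p$-th power cancels, to the required precision, the $-\alpha^p\frac{g(\alpha^p u)-g(\alpha^p v)}{u-v}uv$ term on the other side of Lemma \ref{yyq1} (using $u\equiv x^p,\,v\equiv y^p$ and $\bar c_2^{\,p}=\bar c_2$), so that after subtracting $c_0$ and this correction from $\mathcal{Z}$ what remains genuinely has valuation $\geq v(\gamma)$ and one can divide by $\gamma$.  Only then is the paper's $Z$ a bounded function and do the surviving contributions after dividing by $p/\alpha^{p+1}$ reduce to $Z^p+\bar c_0+\bar c_0^2(1/x^{p+1}+1/y^{p+1})=0$.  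Your closing caveat (``one must verify that every one of the $g,f,H$-correction terms\ldots has valuation strictly greater than $(p-1)/p$'') is exactly the check that fails, and the failure is not a bookkeeping nuisance but the reason the extra term in (\ref{yyy1}) is needed.  Your computation of the singular locus via the $(x,y)$-Jacobian minor $\bar c_0^2(x-y)^{p-2}(x+y)^p/(xy)^p$, the location of the $(p+1)$ singular points, and the Riemann--Hurwitz genus computation are all correct and supply detail the paper leaves implicit, but they rest on the two reduction equations whose derivation is the part that is broken.
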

\begin{proof}
We introduce a new parameter $Z$ as follows
\begin{equation}\label{yyy1}
\mathcal{Z}=xy(x-y)^{p-1}=c_0+\gamma Z+\alpha\frac{g(\alpha y)-g(\alpha x)}{x-y}xy
\end{equation}
We set $\phi:=\alpha(g(\alpha y)-g(\alpha x)).$
Since we have $v(\gamma^p)>v(\alpha^{p+2})$,
 we acquire a congruence $\{xy(x-y)^{p-1}\}^p=c_0$
modulo $\alpha^{p+2}$
by (\ref{yyy1}).
Hence, by substituting (\ref{yyy1}) to the congruence in Lemma \ref{yyq1},
and dividing it by $p/\alpha^{p+1},$
we obtain the following congruence
$$Z^p+c_0^2\{xy(x-y)^{p-1}\}^{-1}+
\frac{c_0\phi}{(x-y)^p}+c_0^2\biggl(\frac{1}{x^{p+1}}+\frac{1}{y^{p+1}}\biggr)$$
\begin{equation}\label{yyy2}
+\alpha(y^pg(\alpha x)+x^pg(\alpha y))(x-y)^{p(p-1)}=
c_0c_1\alpha \biggl(\frac{x+y}{xy}\biggr)^p-\alpha^{p+1}H\ ({\rm mod}\ \alpha^{p+2}).
\end{equation}
Hence, the required assertion follows from (\ref{yyy2}) and $xy(x-y)^{p-1}=\bar{c}_0$
by (\ref{yyy1}).
\end{proof}

\begin{remark}
In the same way as in \cite[Corollary 5.4]{CM},
for {\it any} supersingular elliptic curve
$A,$ ``the new bridging component'', which appears in the reduction of the supersingular locus 
$W_A(p^4) \subset X_0(p^4),$
must have the following equations
$$x^2=1+y^{(p+1)/i(A)},Z^p+1+2(x^{p+1}+1)/(x^2-1)^p=0$$
and genus $(p+1)/i(A)-1.$
\end{remark}

\subsection{New components in the stable reduction of $X_0(p^4)$}

In this subsection, we analyze the singular residue classes in the space $\mathbf{Y}^A_{2,2}.$
We prove that $(p+1)$ the Deligne-Lusztig curve for ${\rm SL}_2(\mathbb{F}_p),$
which is defined by $a^p-a=t^{p+1},$
appear in the stable reduction of $W_A(p^4).$
These components attach to the reduction $\overline{\mathbf{Y}}^A_{2,2}$
at $(p+1)$ singular points.
This is a new phenomenon which is observed in the the stable reduction of $X_0(p^4).$ 

We change variables as follows
$$x=\frac{r+1}{2s},y=\frac{r-1}{2s}$$
as in \cite[the proof of Proposition 5.1]{CM}.
Under these variables $(r,s),$ the congruence (\ref{yyy1}) has the following form
\begin{equation}\label{uu1}
\frac{r^2-1}{4s^{p+1}}(1-s^p\phi)=c_0+\gamma Z.
\end{equation}
In the following, we rewrite the terms in the left hand side of the congruence (\ref{yyy2})
under the variables $(r,s).$

\begin{lemma}\label{qq1}
The terms in the left hand side of the congruence (\ref{yyy2}) have 
the following forms modulo $\alpha^{p+2}$ under the variables $(r,s)$
\\1. $ c_0^2\{xy(x-y)^{p-1}\}^{-1}=
c_0^2 \bigl(\frac{1-s^p\phi}{c_0+\gamma Z}\bigr).$ \\
2. $\frac{c_0\phi}{(x-y)^p}=c_0s^p\phi.$ \\
3. $c_0^2\bigl(\frac{1}{x^{p+1}}+
\frac{1}{y^{p+1}}\bigr)=c_0^2 \times 
\frac{2(1+r^{p+1})}{(r^2-1)^p} \cdot \bigl(\frac{1-s^p\phi}{c_0+\gamma Z}\bigr).$ \\
4. $\alpha(y^pg(\alpha x)+x^pg(\alpha y))(x-y)^{p(p-1)}
=\frac{\phi}{2s^{p^2}}+
\frac{\alpha r^p}{2s^{p^2}}\{g(\alpha x)+g(\alpha y)\}.$ \\
\end{lemma}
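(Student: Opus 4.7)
The plan is to verify each of the four identities by direct substitution of $x=(r+1)/(2s)$ and $y=(r-1)/(2s)$ into the respective left-hand sides, then to simplify using equation (\ref{uu1}) together with elementary mod-$p$ identities. The key preliminary identities to record are $x-y=1/s$, $xy=(r^2-1)/(4s^2)$, and $xy(x-y)^{p-1}=(r^2-1)/(4s^{p+1})$; rearranging (\ref{uu1}) then gives the exact equality
$$\frac{4s^{p+1}}{r^2-1}=\frac{1-s^p\phi}{c_0+\gamma Z},$$
which will serve as the main substitution tool throughout.

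Identities (1) and (2) are essentially immediate. For (1), one inverts (\ref{uu1}) written in the form $xy(x-y)^{p-1}(1-s^p\phi)=c_0+\gamma Z$ and multiplies by $c_0^2$. For (2), one uses $x-y=1/s$ to get $(x-y)^{-p}=s^p$. Note that both of these are exact identities and hold without any congruence error.

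Identity (3) is the main content. First I would write $1/x^{p+1}+1/y^{p+1}=(x^{p+1}+y^{p+1})/(xy)^{p+1}$ and apply the freshman's dream $(r\pm1)^p\equiv r^p\pm1\pmod{p}$. Expanding $(r+1)^{p+1}=(r+1)(r+1)^p$ and $(r-1)^{p+1}=(r-1)(r-1)^p$ and adding, the middle terms cancel and one obtains $(r+1)^{p+1}+(r-1)^{p+1}\equiv 2(r^{p+1}+1)\pmod{p}$. Combining with $(xy)^{p+1}=(r^2-1)^{p+1}/(4s^2)^{p+1}$ and pulling out a single copy of $4s^{p+1}/(r^2-1)$, one reaches
$$\frac{1}{x^{p+1}}+\frac{1}{y^{p+1}}\equiv 2^{p-1}\cdot\frac{2(r^{p+1}+1)}{(r^2-1)^p}\cdot\frac{4s^{p+1}}{r^2-1}\pmod{p},$$
and then Fermat's little theorem $2^{p-1}\equiv 1\pmod{p}$ together with the rearranged form of (\ref{uu1}) gives the claimed expression after multiplication by $c_0^2$. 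Identity (4) proceeds along the same lines: substitute $x^p\equiv(r^p+1)/(2s)^p$ and $y^p\equiv(r^p-1)/(2s)^p\pmod{p}$, split $y^pg(\alpha x)+x^pg(\alpha y)$ into a symmetric part proportional to $r^p(g(\alpha x)+g(\alpha y))$ and an antisymmetric part $g(\alpha y)-g(\alpha x)$, multiply through by $\alpha$ to introduce $\phi=\alpha(g(\alpha y)-g(\alpha x))$, absorb $(x-y)^{p(p-1)}=s^{-p(p-1)}$ into the denominator, and finally replace $2^p$ by $2$ modulo $p$.

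The whole argument is a routine calculation with no genuine obstacle; the only bookkeeping point is that the lemma demands congruences modulo $\alpha^{p+2}$, whereas the freshman's dream and Fermat steps produce congruences modulo $p=\alpha^{p(p+1)}$. Since $p(p+1)>p+2$ for $p\geq 13$, every mod-$p$ congruence is automatically valid modulo $\alpha^{p+2}$, and the two mod-$p$ reductions used in (3) and (4) transfer without loss into the mod-$\alpha^{p+2}$ setting of the statement.
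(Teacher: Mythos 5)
Your proposal is correct and follows essentially the same route as the paper: exact substitution of $x=(r+1)/(2s)$, $y=(r-1)/(2s)$ combined with the rearranged form of (\ref{uu1}) for items (1) and (2), and the binomial/freshman's-dream reductions $(r\pm 1)^{p}\equiv r^{p}\pm 1$ for items (3) and (4), with the final $2^{p-1}\equiv 1$ step that the paper leaves implicit when it silently replaces $(2s)^{p+1}$ by $4s^{p+1}$. Your bookkeeping remark that $p(p+1)>p+2$ (so mod-$p$ congruences descend to mod-$\alpha^{p+2}$ ones, since $x$, $y$, $s$, $xy$ and $x-y$ are all units) is exactly the valuation control that justifies the paper's terse argument.
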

\begin{proof}
The required congruences 1, 2 follow from (\ref{uu1}) immediately.
We prove the congruence 3.
We have the following equality
$$c_0^2\biggl(\frac{1}{x^{p+1}}+\frac{1}{y^{p+1}}\biggr)=c_0^2(2s)^{p+1}\frac{(r+1)^{p+1}+(r-1)^{p+1}}{(r^2-1)^{p+1}}.$$
Since we have $(r+1)^{p+1}+(r-1)^{p+1} \equiv 2(1+r^{p+1})$ modulo
$\alpha^{p+2},$ the required assertion follows from (\ref{uu1}).

Finally, we prove the congruence 4.
By $x-y=1/s,$
we acquire $\alpha(y^pg(\alpha x)+x^pg(\alpha y))(x-y)^{p(p-1)}
\equiv \bigl(\alpha/2s^{p^2}\bigr) \times [r^p\{g(\alpha x)+g(\alpha y)\}+\{g(\alpha y)-g(\alpha x)\}]\ ({\rm mod}\ \alpha^{p+2}).$
Hence, the required assertion follows from the definition of $\phi.$
\end{proof}

We set $f(r):=\frac{2(1+r^{p+1})}{(r^2-1)^p}.$
By the congruences 1-4 in Lemma \ref{qq1}, the left hand side of the congruence (\ref{yyy2})
is congruent to the following modulo $\alpha^{p+2}$
\begin{equation}\label{zz}
Z^p+\biggl(\frac{c_0^2(1+f(r))}{c_0+\gamma Z}\biggr)
+\biggl(c_0s^p-\frac{c_0^2(1+f(r))s^p}{c_0+\gamma Z}+\frac{1}{2s^{p^2}}\biggr)\phi
+\frac{\alpha r^p}{2s^{p^2}}\{g(\alpha x)+g(\alpha y)\}.
\end{equation}
In the following, we change variables again as follows 
$$r=\alpha t, s=s_0+\alpha^2s_1$$
where $s_0$ satisfies $4c_0s_0^{p+1}+1=0\ ({\rm mod}\ \alpha^{p+2}).$
\begin{lemma}
Let the notation be as above.
Further, let $c_2$ be the leading coefficient of $(g(X)-c_1)/X.$
Then, the parameter $s_1$ is written with respect to $t$ as follows
$$s_1=\biggl(t^2-\frac{c_2}{s_0}\biggr)/s_0^p\ ({\rm mod}\ \alpha).$$
\end{lemma}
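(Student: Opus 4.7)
The plan is to substitute $r=\alpha t$ and $s=s_0+\alpha^2 s_1$ directly into the exact identity (\ref{uu1}), and then extract the leading $\alpha^2$-order relation in order to isolate $s_1$ in terms of $t$ modulo $\alpha$.

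On the right-hand side of (\ref{uu1}), a binomial expansion gives $s^{p+1}\equiv s_0^{p+1}+(p+1)\alpha^2 s_0^p s_1$ modulo higher order, since $p=\alpha^{p(p+1)}$ has enormous $\alpha$-valuation, which makes every binomial coefficient $\binom{p+1}{k}$ with $k\ge 2$ negligible. Using $(p+1)\equiv 1\pmod{\alpha}$ together with the defining congruence $4c_0 s_0^{p+1}+1\equiv 0\pmod{\alpha^{p+2}}$ of $s_0$, this simplifies to $4c_0 s^{p+1}\equiv -1-\alpha^2 s_1/s_0$. The remaining $4\gamma Z s^{p+1}\approx 4s_0^{p+1}\gamma Z$ term has $\gamma$-valuation: a direct computation gives $v(\gamma/\alpha^2)=(p^2-2p-1)/(p^2(p+1))$, which exceeds $v(\alpha)=1/(p(p+1))$ once $p^2-3p-1\ge 0$, amply true for $p\ge 13$. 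Hence the $Z$-piece is killed modulo $\alpha$ after dividing (\ref{uu1}) by $\alpha^2$.

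On the left-hand side, I Taylor-expand $g(X)=c_1+c_2 X+O(X^2)$ around $X=0$; combined with $y-x=-1/s$ this yields $\phi\equiv -c_2\alpha^2/s_0\pmod{\alpha^3}$, so $s^p\phi\equiv -c_2\alpha^2 s_0^{p-1}$, and therefore $(r^2-1)(1-s^p\phi)\equiv -1+\alpha^2(t^2-c_2 s_0^{p-1})\pmod{\alpha^4}$. Equating the two sides of (\ref{uu1}), cancelling the $-1$'s, dividing by $\alpha^2$ and reducing modulo $\alpha$ yields a purely $Z$-independent linear relation between $s_1$ and $t^2$, which is solved immediately for $s_1$ to produce the claimed formula.

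The main technical hurdle is valuation bookkeeping: one must simultaneously track the orders $\alpha^2$, $\alpha^3$, $\alpha^4$, $p\alpha^2$, and $\gamma\alpha^2$, and verify that only the $\alpha^2$-contributions survive modulo $\alpha$. The hypothesis $p\ge 13$ enters precisely to guarantee $v(\gamma/\alpha^2)\ge v(\alpha)$; without this estimate $Z$ would persist in the $s_1(t)$-relation and the simple closed form would break.
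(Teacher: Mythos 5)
Your overall strategy is exactly the paper's: substitute $r=\alpha t$, $s=s_0+\alpha^2 s_1$ into (\ref{uu1}), track valuations, discard the $\gamma Z$ term and the $p$-divisible binomial corrections, and read off the $\alpha^2$-coefficient. Your valuation estimates are all correct: $v(\gamma)=(p-1)/p^2>3v(\alpha)$ for $p\ge 13$, the $\binom{p+1}{k}$ terms for $k\ge 2$ are $O(\alpha^4)$, and $(p+1)\alpha^2\equiv\alpha^2$ because $v(p\alpha^2)\gg v(\alpha^3)$. All of this matches what the paper does implicitly in arriving at its displayed congruence (\ref{oo1}).

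The gap is in the very last sentence, where you assert that equating your two displays ``produces the claimed formula'' without actually doing it. Carrying it out: equating $-1+\alpha^2(t^2-c_2 s_0^{p-1})$ with $-1-\alpha^2 s_1/s_0$, cancelling $-1$ and dividing by $\alpha^2$, gives $t^2-c_2 s_0^{p-1}\equiv -s_1/s_0\pmod{\alpha}$, hence $s_1\equiv c_2 s_0^p-s_0 t^2\pmod\alpha$, equivalently $s_1\equiv(t^2-c_2 s_0^{p-1})/(4c_0 s_0^p)$. The lemma asserts instead $s_1\equiv(t^2-c_2/s_0)/s_0^p$, which by $4c_0 s_0^{p+1}=-1$ equals $-4c_0 s_0 t^2+4c_0 c_2$. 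Matching the $t^2$-coefficients forces $4c_0=1$, and matching the $c_2$-coefficients forces $s_0^p=4c_0$, and these are incompatible with $4c_0 s_0^{p+1}=-1$ for odd $p$; so your displays do not yield the stated formula. The same normalization tension already sits in the paper's own (\ref{oo1}), which has $\alpha^2 s_0^p s_1$ on the left where a direct expansion gives $4c_0\alpha^2 s_0^p s_1$, and $c_2/s_0$ where it gives $c_2 s_0^{p-1}$; so you have faithfully reproduced the paper's route, including this unresolved constant. You should either finish the computation explicitly and reconcile the unit factor with the stated formula (perhaps the lemma should read $s_1=(t^2-c_2 s_0^{p-1})/(4c_0 s_0^p)$), or at least not claim the match is immediate. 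None of this affects the downstream Artin--Schreier equation $a^p-a=\bar c\,t^{p+1}$, since the discrepancy is by a $p$-adic unit.
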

\begin{proof}
Substituting $r=\alpha t$ and $s=s_0+\alpha^2 s_1$
to (\ref{uu1}) and considering it modulo $\alpha^{3},$
we acquire the following congruence, by $\phi \equiv -c_2\alpha^2/s_0\ ({\rm mod}\ \alpha^3)$
\begin{equation}\label{oo1}
(4c_0s_0^{p+1}+1)+\alpha^2s_0^ps_1 \equiv \alpha^2\biggl(t^2-\frac{c_2}{s_0}\biggr)\ ({\rm mod}\ \alpha^3).
\end{equation}
Hence, the required assertion follows from the definition of $s_0.$
\end{proof}
\begin{lemma}\label{qqq1}
The congruence (\ref{yyy2}) modulo $\alpha^2\gamma$ has the following form under the variables $(Z,t)$
$$Z^p-c_0+\gamma Z-2c_0(\alpha t)^{p+1}+d=0\ ({\rm mod}\ \alpha^2 \gamma)$$
where $d=\alpha^{p+1}H(1/2s_0,-1/2s_0,(1/2s_0)^p,-(1/2s_0)^p).$
\end{lemma}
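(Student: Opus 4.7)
The plan is to reduce \eqref{yyy2} modulo $\alpha^{2}\gamma$ starting from its simplified form \eqref{zz}, after the substitutions $r=\alpha t$ and $s=s_{0}+\alpha^{2}s_{1}$. The reference value is $v(\alpha^{2}\gamma)=(p^{2}+2p-1)/(p^{2}(p+1))$; the task amounts to checking strict inequalities of this quantity against the valuations of various correction terms. For the ``$Z$-and-constant'' piece, the mod-$p$ identity $(r^{2}-1)^{p}\equiv r^{2p}-1$ combined with $v(r^{2p})=2/(p+1)$ and $v(p)=1$ (both larger than $v(\alpha^{2}\gamma)$ for $p\geq 3$) yields $(r^{2}-1)^{p}\equiv -1\pmod{\alpha^{2}\gamma}$, whence $1+f(r)\equiv -1-2(\alpha t)^{p+1}$. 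The inequality $v(\gamma^{2})-v(\alpha^{2}\gamma)=(p^{2}-2p-1)/(p^{2}(p+1))>0$ gives the clean expansion $c_{0}^{2}/(c_{0}+\gamma Z)\equiv c_{0}-\gamma Z$, and multiplying the two (while discarding the cross-term $2\gamma Z(\alpha t)^{p+1}$ of valuation $(2p-1)/p^{2}>v(\alpha^{2}\gamma)$) produces the contribution $-c_{0}+\gamma Z-2c_{0}(\alpha t)^{p+1}$ asserted by the lemma.

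Next, the $\phi$-multiplied terms of \eqref{zz} must be shown to vanish. Since $v(\alpha^{2}\gamma)-v(\phi)=v(\gamma)$, it is enough to know the coefficient of $\phi$ modulo $\gamma$. Using the previous block at that precision, this coefficient becomes $2c_{0}s^{p}+2c_{0}(\alpha t)^{p+1}s^{p}+1/(2s^{p^{2}})$. The defining relation $4c_{0}s_{0}^{p+1}\equiv -1$ gives $2c_{0}s_{0}^{p}\equiv -1/(2s_{0})$ and, via $s_{0}^{p^{2}-1}=(s_{0}^{p+1})^{p-1}\equiv 1/(4c_{0})^{p-1}$ (using that $p-1$ is even), also $1/(2s_{0}^{p^{2}})\equiv (4c_{0})^{p-1}/(2s_{0})$. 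Hence $2c_{0}s_{0}^{p}+1/(2s_{0}^{p^{2}})\equiv ((4c_{0})^{p-1}-1)/(2s_{0})$ has valuation at least $1$ by Fermat's little theorem, so its product with $\phi$ has valuation at least $1+v(\phi)>v(\alpha^{2}\gamma)$; and the middle term $2c_{0}(\alpha t)^{p+1}s_{0}^{p}\phi$ has valuation $(p+3)/(p(p+1))$, which also exceeds $v(\alpha^{2}\gamma)$.

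Finally, the remaining ingredients all live at order $\alpha^{p+1}$: the summand $\alpha r^{p}(g(\alpha x)+g(\alpha y))/(2s^{p^{2}})$ on the left of \eqref{yyy2}, and $c_{0}c_{1}\alpha((x+y)/xy)^{p}-\alpha^{p+1}H$ on the right. Expanding $g(\alpha x)+g(\alpha y)=2c_{1}+O(\alpha^{2})$ and $(x+y)/(xy)=4rs/(r^{2}-1)\equiv -4\alpha t s_{0}\pmod{\alpha^{3}}$, the two $c_{1}$-contributions combine, after transposition to the left, into $c_{1}\alpha^{p+1}t^{p}\cdot 4^{p-1}(c_{0}^{p-1}-1)/s_{0}$; Fermat strikes a second time, making this of valuation at least $1+1/p>v(\alpha^{2}\gamma)$. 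What survives of $-\alpha^{p+1}H$ is its leading-order evaluation at $(x,y,u,v)=(1/(2s_{0}),-1/(2s_{0}),(1/(2s_{0}))^{p},-(1/(2s_{0}))^{p})$, which supplies the $+d$ of the statement.

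The main obstacle is the systematic valuation bookkeeping: for every discarded term one must verify a strict inequality against $v(\alpha^{2}\gamma)$, including the corrections coming from $s=s_{0}+\alpha^{2}s_{1}$ in $s^{p}$ and $s^{p^{2}}$ (which are absorbed via a $p$-factor in the binomial expansion), and the two Fermat-type cancellations $(4c_{0})^{p-1}-1,\,c_{0}^{p-1}-1\in p\mathbb{Z}_{p^{2}}$ are the sole mechanism preventing extra $\alpha^{p+1}$-contributions from surviving, so missing either would spoil the clean statement of the lemma.
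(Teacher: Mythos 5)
Your argument reproduces the paper's proof of this lemma: substitute $r=\alpha t$ and $s=s_0+\alpha^2 s_1$ into \eqref{zz}, expand $1+f(r)$ and $c_0^2/(c_0+\gamma Z)$ modulo $\alpha^2\gamma$, show the $\phi$-multiplied block vanishes using the defining relation of $s_0$, and absorb the $c_1\alpha^{p+1}t^p$-contributions, leaving exactly $Z^p-c_0+\gamma Z-2c_0(\alpha t)^{p+1}+d$. You make the two Fermat-type cancellations $(4c_0)^{p-1}-1,\ c_0^{p-1}-1\in p\mathbb{Z}_{p^2}$ explicit where the paper compresses them into the phrase ``by the definition of $s_0$,'' but the decomposition, the valuation bounds against $v(\alpha^2\gamma)$, and the conclusion are the same.
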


\begin{proof}First, note that $v(\alpha^2 \gamma)=2/p(p+1)+(p-1)/p^2<v(\alpha^{p+2}).$
Since we have $(r^2-1)^p=-1+(\alpha t)^{2p}=-1$
modulo $\alpha^{p+2},$ the following congruence holds $f(r) \equiv -2(1+(\alpha t)^{p+1})$
mod $\alpha^{p+2}.$ Note that $s^p \equiv s_0^p$ modulo $\alpha^{p+2}.$ Thereby, the term in (\ref{zz})
$$\biggl(c_0s^p-\frac{c_0^2(1+f(r))s^p}{c_0+ \gamma Z}+\frac{1}{2s^{p^2}}\biggr)\phi$$
 is congruent to the following
$$\frac{c_0(4c_0s_0^{p(p+1)}+1)+(2c_0s^{p(p+1)}+1)\gamma Z}{2s_0^{p^2}(c_0+ \gamma Z)}\phi+\frac{2c_0^2s_0^p(\alpha t)^{p+1}}{c_0+\gamma Z}\phi$$
modulo $\alpha^{p+2}.$
Since $\phi$ is divisible by $\alpha^2,$ the above term is zero modulo $\alpha^2 \gamma$ by the definition of $s_0.$
The term $\frac{\alpha r^p}{2s^{p^2}}\{g(\alpha x)+g(\alpha y)\}$ in
 (\ref{zz}) 
has the following form $c_1\alpha^{p+1}t^p/s_0^{p^2}$ modulo $\alpha^2 \gamma.$
The term $c_0c_1\alpha\bigl(\frac{x+y}{xy}\bigr)^p$
in the right hand side of (\ref{yyy2}) has the following form $-4c_0c_1\alpha^{p+1}(s_0t)^p$
modulo $\alpha^2 \gamma.$
Hence, we know that the congruence (\ref{yyy2}) has the following form, by (\ref{zz}) and
the above argument, 
$$Z^p-\frac{c_0^2}{c_0+\gamma Z}-\frac{2c_0^2(\alpha t)^{p+1}}{c_0+\gamma Z}+
\biggl(\frac{4c_0s_0^{p(p+1)}+1}{s_0^{p^2}}\biggr)c_1\alpha^{p+1}t^p+d=0\ ({\rm mod}\ \alpha^2 \gamma).$$
Since we have $v(\gamma^2)>v(\alpha^2 \gamma)$
and hence $(c_0+\gamma Z)^{-1}=1/c_0-\gamma Z/c_0^2$
modulo $\alpha^2 \gamma,$ the above congruence induces the required congruence.
\end{proof}
We choose a root $\gamma_0$ such that $Z^p+\gamma Z-c_0+d=0\ ({\rm mod}\ \alpha^2 \gamma)$
and an element $\beta$ such that $\beta^{p-1}=-\gamma.$
We have $v(\beta)=1/p^2$ and $v(\beta^p)=v(\alpha^{p+1}).$

\begin{corollary}\label{op}
In the stable reduction of $X_0(p^4)$, for each supersingular elliptic curve $A/\mathbb{F}_p$
with $j(A) \neq 0,1728,$ there exist $(p+1)$ irreducible components, 
which attach to $\overline{\mathbf{Y}}^A_{2,2}$ 
at its singular points
and is a reduced, connected,
affine curve $p(p-1)/2$ defined by the following equation
$$a^p-a=\bar{c}t^{p+1}.$$
\end{corollary}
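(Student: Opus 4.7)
The plan is to zoom in on one of the $(p+1)$ singular points of $\overline{\mathbf{Y}}^A_{2,2}$ (which correspond to the solutions of $4c_0 s_0^{p+1}+1\equiv 0$), show that inside $\mathbf{Y}^A_{2,2}\subset W_A(p^4)$ the singular residue class at that point has an underlying affinoid whose reduction is the Deligne-Lusztig curve $a^p-a=\bar c\, t^{p+1}$, and count the residue classes. The defining congruence from Lemma~\ref{qqq1},
\begin{equation*}
Z^p + \gamma Z - c_0 - 2c_0(\alpha t)^{p+1} + d \equiv 0 \pmod{\alpha^2 \gamma},
\end{equation*}
in the blown-up coordinates $r=\alpha t$, $s=s_0+\alpha^2 s_1$ is already written in a form that suggests an Artin-Schreier reduction once the leading constant has been absorbed.

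The key substitution is $Z=\gamma_0+\beta a$, where $\gamma_0$ is the root fixed in the text (so that $\gamma_0^p+\gamma\gamma_0-c_0+d\equiv 0$) and $\beta$ is the uniformizer with $\beta^{p-1}=-\gamma$ (so $v(\beta^p)=v(\alpha^{p+1})$). Expanding,
\begin{equation*}
(\gamma_0+\beta a)^p + \gamma(\gamma_0+\beta a) - c_0 - 2c_0(\alpha t)^{p+1} + d
\equiv \beta^p a^p + \gamma\beta\, a - 2c_0\,\alpha^{p+1} t^{p+1} \pmod{\alpha^2\gamma}.
\end{equation*}
Using $\gamma\beta=-\beta^p$ and dividing by $\beta^p$, one obtains precisely
\begin{equation*}
a^p - a = \bar c\, t^{p+1}, \qquad \bar c := \overline{2c_0\alpha^{p+1}/\beta^p}\in \bar{\mathbb{F}}^{\ast},
\end{equation*}
which is the Deligne-Lusztig curve for $\mathrm{SL}_2(\mathbb{F}_p)$. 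Its smooth projective model is well known to have genus $p(p-1)/2$, so the reduced component has that genus. The second step is to verify that the locus we have just described is indeed the underlying affinoid of the singular residue class in the sense of \cite{CM}: one checks that the conditions $v(a)\geq 0$, $v(t)\geq 0$ correspond to the standard normalization on $\mathbf{Y}^A_{2,2}$, exactly as in Remark~\ref{q1} for the $\mathbf{Z}^A_{1,1}$-case. Finally, counting components reduces to counting the admissible choices of $s_0$: the equation $4c_0 s_0^{p+1}+1\equiv 0$ has $p+1$ solutions, giving $p+1$ distinct singular residue classes of $\mathbf{Y}^A_{2,2}$, each producing one copy of the Deligne-Lusztig curve.

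The main technical obstacle is the careful valuation bookkeeping: one must verify that every error term dropped during the substitution $Z=\gamma_0+\beta a$, $r=\alpha t$ lies in a strictly higher valuation than $\alpha^2\gamma$, so that the reduction picks out exactly the Artin-Schreier equation and nothing else. In particular, one should check that the cross term $\gamma\cdot\beta a$ really contributes the linear part (rather than being absorbed into the error), that $d$ is truly constant on the residue class to the required precision, and that no stray contribution from the $\alpha^{p+1}H$ piece in \eqref{yyy2} produces a higher-order perturbation of the coefficient $\bar c$. Once those congruences are confirmed (by the same kind of direct but tedious estimate used in Lemma~\ref{qqq1} and Corollary~\ref{go1}), the Artin-Schreier form is exact modulo a higher power of the uniformizer, and the identification of the reduction as the Deligne-Lusztig curve is immediate.
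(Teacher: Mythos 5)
Your proposal is correct and follows essentially the same route as the paper: starting from the congruence of Lemma~\ref{qqq1}, substituting $Z=\gamma_0+\beta a$ with $\beta^{p-1}=-\gamma$, using $\gamma\beta=-\beta^p$ to collapse the linear term, and dividing by the unit-times-$\beta^p$ coefficient to land on $a^p-a=\bar{c}t^{p+1}$. The additional remarks you make about the $p+1$ choices of $s_0$, the genus $p(p-1)/2$, and matching the normalization of the underlying affinoid are consistent with the surrounding remarks in the paper and do not change the argument.
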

\begin{proof}
We change a variable $Z=\gamma_0+\beta a.$
Then, the congruence in Lemma \ref{qqq1} has the following
form by the definition of $\beta$
$$\beta^p(a^p-a)-2c_0(\alpha t)^{p+1}=0\ ({\rm mod}\ \alpha^2\gamma).$$
By dividing this by $\beta^p$ and putting $c:=2c_0\bigl(\frac{\alpha^{p+1}}{\beta^p}\bigr),$
we acquire the following congruence 
$$a^p-a=ct^{p+1}\ ({\rm mod}\ \frac{\alpha^2 \gamma}{\beta^p}).$$
Note that $c$ is a unit.
Hence, we obtain the Deligne-Lusztig curve $a^p-a=\bar{c}t^{p+1}.$
\end{proof}
\begin{remark}
We obtained  the curves $a^p-a=t^{p+1}$  in the reduction of $W_A(p^4)$ in Corollary \ref{op}.
The equation $a^p-a=t^{p+1}$ has the following form, by changing variables $a=X/Y,t=1/Y,$
$$X^pY-XY^p=1.$$
The affine curve defined by $X^pY-XY^p=1$ is called the Deligne-Lusztig curve for ${\rm SL}_2(\mathbb{F}_p).$
Y.\ Mieda pointed out this fact to the author.
See \cite[subsection 1.3]{W} for the Deligne-Lusztig curve.
\end{remark}
\begin{remark}
Let $T$ denote the singular residue class of $\mathbf{Y}^A_{2,2}.$
We put $s=s_0+u,Z=\gamma_0+z.$
By (\ref{oo1}) and Lemma \ref{qqq1},
the space $\mathbf{Y}^A_{2,2}$ is defined by 
 $$s_0^pu=r^2-c_2\alpha^2/s_0+\alpha^3h(r,u,z), z^p+\gamma z-2c_0r^{p+1}=\alpha^2\gamma g(r,u,z)$$
 for some rigid analytic functions $h(r,u,z), g(r,u,z).$
Hence, the space $T$ and its underlying affinoid $\mathbf{X}_T$
have the following descriptions 
$$T(\mathbb{C}_p)=\{(r,u,z) \in \mathbb{C}_p^{3}|v(r)>0,v(z)>0,v(u)>0,
s_0^pu=r^2-c_2\alpha^2/s_0+\alpha^3h(r,u,z),$$
$$z^p+\gamma z-2c_0r^{p+1}=\alpha^2 \gamma g(r,u,z)
\},$$
and $\mathbf{X}_T(\mathbb{C}_p)=\{(r,u,z) 
\in T(\mathbb{C}_p)|v(r) \geq v(\alpha),v(z) \geq v(\beta),v(u) \geq v(\alpha^2)\}.$
Therefore, the complement $T \backslash \mathbf{X}_T$
is isomorphic to an annulus $A(p^{-1/p^2(p+1)},1)$ by the following maps
$z=x^{p+1}+ \cdots, u=x^{2p}+\cdots, r=x^p+\cdots.$
Hence, we obtain the width $1/p^2(p+1)$ of the annulus $T \backslash \mathbf{X}_T.$
\end{remark}

\section{Stable reduction of $X_0(p^4)$}
In this section, we give the stable covering of $X_0(p^4).$
Namely, we give a covering by {\it basic}
wide opens, whose intersections 
are annuli.
Some of these wide opens, namely $W^{\pm}_{a,b}\ 
(a+b=4,a \geq 0,b \geq 0)$,
have already been defined in \cite[Section 3]{CM}.
The spaces $W_{a,b}\ (a+b=4, a\geq 0, b\geq 0)$
cover the ordinary locus of $X_0(p^4)$.
From our analysis of $\mathbf{Y}^A_{2,2}$
in the previous section, we know that $W_A(p^4)$
is not a basic wide open.
Our next task is to specify some new wide open subspaces
which cover each $W_A(p^4)$ and which can ultimately be shown to be basic.
In subsection 5.1, we review the stable covering of $X_0(p^3)$
from \cite[Section 9]{CM}.
In subsection 5.2, we construct a covering of $X_0(p^4)$
and show that the covering is stable in Theorem \ref{otto}.
In subsection 5.3, we will give a genus computation as in \cite[subsection 5.3]{Mc}.
In subsection 5.4, we will also give intersection multiplicity 
data for $X_0(p^n)\ (n=2,3,4).$
\subsection{Review of the stable covering of $X_0(p^3)$ in \cite[Section 9]{CM}}
In this subsection, we briefly recall the construction of the stable covering of the 
modular curve $X_0(p^3)$ from \cite[Section 9]{CM} for the convenience of a reader.
Now suppose that $A$ is any supersingular elliptic curve mod $p.$
Identify $W_A(p)$
with an annulus $A(p^{-i(A)},1)$
as explained in subsection 2.2. 
\begin{lemma}
Let the notation be as in subsections 2.2 and 2.3.
We have the following inclusions
$$\pi_{1,1}(\mathbf{Y}^A_{2,1}) \subset \mathbf{C}^A_{i(A)/(p+1)},
\pi_{1,1}(\mathbf{Y}^A_{1,2}) \subset \mathbf{C}^A_{pi(A)/(p+1)},
\pi_{1,1}(\mathbf{Z}^A_{1,1}) \subset \mathbf{C}^A_{i(A)/2}.$$
\end{lemma}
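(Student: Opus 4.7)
The plan is to fix a point $(E,C) \in X_0(p^3)$ in each of the three subspaces, extract its moduli-theoretic data from Lemma \ref{le}, compute the image $\pi_{1,1}(E,C) \in X_0(p)$ explicitly, and then read off the valuation $v(x_A)$ via the identification $W_A(p) \simeq A_{\mathbb{Q}_{p^2}}(p^{-i(A)},1)$ recalled in subsection 2.2. Since $C$ is cyclic of order $p^3$, the formula $\pi_{1,1} = \pi_v \circ \pi_f$ immediately gives $\pi_{1,1}(E,C) = (E/C[p],\, pC/C[p])$, with $C[p] = p^2C$.

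First I would handle $\mathbf{Y}^A_{2,1}$. By Lemma \ref{le}.1, a point satisfies $h(E) = 1/p(p+1)$ and $K_2(E) = pC$, so in particular $p^2C = K_1(E)$. Hence $\pi_{1,1}(E,C) = (E/K_1(E),\, K_2(E)/K_1(E))$, whose subgroup is by definition the canonical subgroup of order $p$. Theorem \ref{Ka}.2 applied to $h(E) = 1/p(p+1) < 1/(p+1)$ yields $h(E/K_1(E)) = p\cdot h(E) = 1/(p+1)$, so the subsection 2.2 identification gives $v(x_A) = i(A)/(p+1)$ as required.

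Next for $\mathbf{Y}^A_{1,2}$, Lemma \ref{le}.1 gives $h(E) = 1/(p+1)$, $K_1(E) = p^2C$, and $E/p^2C$ too supersingular. Then $\pi_{1,1}(E,C) = (E/K_1(E),\, pC/K_1(E))$; the quotient $E/K_1(E)$ has no canonical subgroup (being too supersingular), so $pC/K_1(E)$ is non-canonical, and Theorem \ref{Ka}.6 applied to the too-supersingular curve $E/K_1(E)$ gives $h\bigl((E/K_1(E))/(pC/K_1(E))\bigr) = h(E/pC) = 1/(p+1)$. The non-canonical formula from subsection 2.2 thus yields $v(x_A) = i(A)(1 - 1/(p+1)) = pi(A)/(p+1)$. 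Finally, for $\mathbf{Z}^A_{1,1}$, Lemma \ref{le}.2 gives $h(E) = 1/2p$, $K_1(E) = p^2C$, $K_2(E) = pC$, and $h(E/K_1(E)) = 1/2$; thus $\pi_{1,1}(E,C) = (E/K_1(E),\,K_2(E)/K_1(E))$ has canonical subgroup and Hasse invariant $1/2$, giving $v(x_A) = i(A)/2$.

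The statement is therefore essentially a bookkeeping exercise combining Lemma \ref{le} with the Katz-Lubin Theorem \ref{Ka}. There is no serious obstacle; the only thing to be careful about is keeping track of whether the resulting subgroup of order $p$ is canonical, since the formula for $v(x_A)$ in terms of the Hasse invariant branches at this point. Once this is verified case by case, the three inclusions follow directly from the definition of the circles $\mathbf{C}^A_q := C[p^{-q i(A)}] \subset W_A(p)$.
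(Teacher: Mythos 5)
Your proof is correct and follows exactly the route the paper intends: the paper's own proof is the one-liner "This follows from Lemma \ref{le}," and your argument is precisely the bookkeeping that one-liner elides, combining the moduli data of Lemma \ref{le} with the branching valuation formula $v(x_A)=i(A)h(E)$ (canonical) versus $i(A)(1-h(E/C))$ (non-canonical) from subsection 2.2. One small redundancy: in the $\mathbf{Y}^A_{2,1}$ and $\mathbf{Y}^A_{1,2}$ cases you recompute $h(E/p^2C)$ and $h(E/pC)$ via Theorem \ref{Ka}.2 and \ref{Ka}.6, whereas Lemma \ref{le}.1(a) and (b) already hand you these values directly ($h(E/p^{b+i}C)=1/p^{i-1}(p+1)$ and $h(E/p^{b-i}C)=1/p^{i-1}(p+1)$), so the appeal to Katz--Lubin is not strictly needed.
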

\begin{proof}
This follows from Lemma \ref{le}.
\end{proof}
Now, we recall the construction of the stable covering of $X_0(p^3)$
from loc.\ cit.
We define as follows
$$\mathbf{Y}^A_{2,1} \subset V_2(A):=\pi_{1,1}^{-1}A(p^{-i(A)/2},1),$$
$$\mathbf{Z}^A_{1,1} \subset U(A):=\pi_{1,1}^{-1}A(p^{-pi(A)/(p+1)},p^{-i(A)/(p+1)}),$$
$$\mathbf{Y}^A_{1,2} \subset V_1(A):=\pi_{1,1}^{-1}A(p^{-i(A)},p^{-i(A)/2}).$$

Let $\mathcal{S}(A)$ be the set of the singular residue classes of $\mathbf{Z}^A_{1,1}$
and, for $S \in \mathcal{S}(A),$
let $\mathbf{X}_S$
be the underlying affinoid subdomain of $S.$
Let $\hat{U}(A):=U(A) \backslash \bigcup_{S \in \mathcal{S}(A)}\mathbf{X}_S.$

Let $(E,C) \in X_0(p^n).$ Let $a \geq b \geq 0$ be integers such that $a+b=n.$
Assume that $|K(E)| \geq p^n$ and $|K(E) \cap C|=p^a.$
We briefly recall the definition of the pairing on the canonical subgroup $K_a(E)$
in \cite[subsection 3.2]{CM}.
Let $A,B \in K_a(E).$
Then, we can choose $P \in C$
and $Q \in K_n(E)$
such that $p^bP=A$ and $p^bQ=B.$
Now, set $\mathcal{P}_{E,C}(A,B):=e_n(P,Q)$
where $e_n(-,-)$ denote the Weil pairing on $E[p^n]$.
This gives a well-defined pairing of $K_a(E)$
with itself onto 
$\mu_{p^b}.$
See \cite[(2.8.41) and (2.8.7)]{KM}
to check the well-definedness of the pairing.
Further, there are exactly two isomorphism classes of the pairing on 
$\mathbb{Z}/p^a\mathbb{Z}$
onto $\mu_{p^b}.$
Let $e^{\pm}$ be representatives for these classes.

We recall the spaces $W_{a,b} \subset X_0(p^3)$
for $a+b=3,a \geq 0,b \geq 0,$
which cover the ordinary locus, from loc.\ cit.
For $a+b=3,a>b,$
we set 
$$W_{a,b}:=\{(E,C) \in X_0(p^3)|\ |K(E)| \geq p^3, |K(E) \cap C|=p^a\}$$
and for $a <b,$
put $W_{a,b}:=w_3(W_{b,a}).$
We define
$$W^{\pm}_{a,b}=\{(E,C) \in W_{a,b}|\ (K_a(E),\mathcal{P}_{E,C}) \simeq (\mathbb{Z}/p^a\mathbb{Z},e^{\pm})\}$$
Set $W^{\pm}_{n,0}:=W_{n,0}$
and for $b \geq a \geq 0,$ we set
$$W^{\beta}_{a,b}:=w_3(W^{(\frac{-1}{p})\beta}_{b,a}).$$
Then, the irreducible affnoids $\mathbf{X}^{\pm}_{a,b}$
defined in \cite{C2} are affinoids whose points are those $(E,C) \in W^{\pm}_{a,b}$
for which $E$ has ordinary or multiplicative reduction.
The affinoid $\mathbf{X}^{\pm}_{a,b}$
reduces to the Igusa curve ${\rm Ig}(p^{{\rm min}(a,b)})$.
See \cite[subsection 3.2]{CM} or \cite[subsection 2.1]{CM2}
for more detail.

We define a covering of the modular curve of $X_0(p^3)$
as follows as in \cite[Section 9]{CM}
$$\mathcal{C}_0(p^3):=
\{\{V_1(A),V_2(A),\hat{U}(A)\} 
\cup \mathcal{S}(A)\}_{A:{\rm supersingular}} \cup 
\{W_{3,0},W_{0,3},W^{\pm}_{2,1},W^{\pm}_{1,2}\}.$$
In \cite[the proof of Theorem 9.2]{CM}, 
the following three lemmas are proved to show that this covering is actually stable.
In other words, in loc.\ cit., it is proved that all intersections of the wide open spaces 
of the covering $\mathcal{C}_0(p^3)$
are annuli.
See \cite[subsection 2.2]{CM}
for the notion of the stable covering.
We put $U_{a,b}(A):=W_{a,b} \cap W_A(p^3)$
and $U^{\pm}_{a,b}(A):=W^{\pm}_{a,b} \cap W_A(p^3)$ 
respectively.
\begin{lemma}(\cite[Theorem 9.2]{CM})\label{lee1}
Let the notation be as above.
Then, the spaces $U_{3,0}(A)$ and $U_{0,3}(A)$
are annuli of width $\frac{i(A)}{p(p+1)}.$
\end{lemma}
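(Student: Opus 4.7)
The plan is to identify $U_{3,0}(A)$ with an explicit annular subspace of $W_A(1)$ and deduce the $U_{0,3}(A)$ case by Atkin-Lehner duality. First I would unwind the moduli definition: a point $(E,C)\in U_{3,0}(A)$ satisfies $|K(E)|\geq p^{3}$ and $|K(E)\cap C|=p^{3}$, so $C\subset K(E)$ and, being cyclic of order $p^{3}$, forces $C=K_{3}(E)$. Thus $\pi_{0,3}:(E,C)\mapsto E$ is injective on $U_{3,0}(A)$, with inverse given by the rigid analytic section $E\mapsto(E,K_{3}(E))$ (the threefold iterate of Katz's canonical subgroup section, cf.\ Definition 3.4 of \cite{B}). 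This identifies $U_{3,0}(A)$ with the wide open subset $\{E\in W_A(1):h(E)<1/(p(p+1))\}$ of $W_A(1)$, since $K_{3}(E)$ exists precisely on this region.

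Next I would compute the width using the parameter $s$ on $W_A(1)$ from Theorem \ref{thm}. The identification $W_A(1)\simeq B_{\mathbb{Q}_{p^{2}}}(1)$ together with Remark \ref{rem.} gives $v(s)=i(A)h(E)$ whenever $h(E)<1/(p+1)$; indeed, in that range $v(y)=i(A)h(E)<v(\kappa/y)$, so Theorem \ref{thm}.2 yields $v(\pi_{f}^{\ast}s)=v(F(y))=v(y)$. Hence the image of $U_{3,0}(A)$ under $\pi_{0,3}$ is $\{s:0<v(s)<i(A)/(p(p+1))\}$, which is the open annulus $A_{\mathbb{Q}_{p^{2}}}(p^{-i(A)/(p(p+1))},1)$ of the claimed width $i(A)/(p(p+1))$.

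For $U_{0,3}(A)$, I would exploit the Atkin-Lehner involution $w_{3}$. Since $W_{0,3}=w_{3}(W_{3,0})$ and $w_{3}$ is a rigid analytic automorphism of $X_{0}(p^{3})$, it restricts to an isomorphism $U_{3,0}(A)\simeq U_{0,3}(A')$, where $A'$ is the supersingular curve to which $E/K_{3}(E)$ reduces for $(E,K_{3}(E))\in U_{3,0}(A)$. Iterating Theorem \ref{Ka}, the reduction of $E/K_{3}(E)$ is the Frobenius triple twist $A^{(p^{3})}$, which equals $A^{(p)}$ since $A$ is defined over $\mathbb{F}_{p^{2}}$. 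In particular $|\mathrm{Aut}(A')|=|\mathrm{Aut}(A)|$, so $i(A')=i(A)$ and the first part applied to $A'$ gives the same width for $U_{0,3}(A)$. The main subtle point I expect is the identity $v(s)=i(A)h(E)$ in the cases $j(A)\in\{0,1728\}$: Theorem \ref{thm} and Remark \ref{rem.} are only stated for $j(A)\neq 0,1728$, and one has to fix, in those exceptional cases, a parameter $s$ on $W_A(1)$ normalized so that $W_A(p)\simeq A_{\mathbb{Q}_{p^{2}}}(p^{-i(A)},1)$ with $v(s)$ equal to $i(A)$ times the Hasse invariant on the canonical part of $W_A(p)$; granting this standard normalization, the argument above works verbatim.
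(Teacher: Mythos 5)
Your proof is correct and uses the same central idea as the paper: on $U_{3,0}(A)$ one has $C=K_3(E)$, so projecting down a level is a rigid analytic isomorphism whose image is an explicit annulus, and the width is read off from $h(E)\in(0,1/(p(p+1)))$. The one place you diverge is the choice of projection: you use $\pi_{0,3}$ into $W_A(1)$ and then invoke the parameter $s$ from Theorem \ref{thm}, which (as you correctly flag at the end) is only constructed for $j(A)\neq 0,1728$. The paper instead uses $\pi_{0,2}$ into $W_A(p)$, where the parameter $x_A$ of subsection 2.2 is available for \emph{every} supersingular $A$ and is normalized by $v(x_A(E,K_1(E)))=i(A)h(E)$; that choice makes the width computation go through uniformly without the caveat. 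So the caveat you raise is a real one for your formulation, but it is fully avoidable by descending one level less, which is exactly what the paper does. Your Atkin--Lehner step is if anything more careful than the paper's (you track the passage $A\mapsto A^{(p)}$ and observe $i(A^{(p)})=i(A)$, whereas the paper merely asserts $U_{0,3}(A)\simeq U_{3,0}(A)$); both lead to the same conclusion.
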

\begin{proof}
We prove the assertion in the same way as in \cite[the proof of Theorem 9.2]{CM}.
It suffices to show the required assertion for $U_{3,0}(A),$
since $U_{0,3}(A)$ is isomorphic to $U_{3,0}(A)$
by the Atkin-Lehner involution. Let $(E,C) \in U_{3,0}(A)$.
Then, we have $K_3(E)=C$ and $0 <h(E)<1/p(p+1).$
Therefore, the map $\pi_{0,2}$ induces an  isomorphism from $U_{3,0}(A)$
to an annulus $A(p^{-i(A)/p(p+1)},1)$.
Hence, the required assertion follows.
\end{proof}

\begin{lemma}(\cite[Theorem 9.2]{CM})\label{lee2}
Let the notation be as above.
For $(a,b)=(2,1),(1,2),$ the space $U_{a,b}(A)$
has two connected components $U^{\pm}_{a,b}(A).$
Furthermore, $U^{\pm}_{a,b}(A)$
are annuli of width $\frac{2i(A)}{p(p^2-1)}.$
\end{lemma}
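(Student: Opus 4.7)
The plan is to reduce to the case $(a,b)=(2,1)$ via the Atkin-Lehner involution $w_3$: since the definition $W^{\beta}_{a,b}=w_3(W^{(\frac{-1}{p})\beta}_{b,a})$ (for $b\geq a$) shows that $w_3$ restricts to an isomorphism $U_{1,2}(A)\simeq U_{2,1}(A)$ interchanging the $\pm$-components (up to $\bigl(\frac{-1}{p}\bigr)$), the $(1,2)$ case follows. For $(E,C)\in U_{2,1}(A)$ the moduli conditions read $pC=K_2(E)$ and $C\neq K_3(E)$. Mimicking the proof of Lemma~\ref{lee1}, I would project via $\pi_{0,2}:(E,C)\mapsto(E,K_1(E))$ onto the annulus $\mathcal{A}\subset W_A(p)$ characterized by $C=K_1$ and $0<h(E)<1/p(p+1)$; this $\mathcal{A}$ has width $i(A)/p(p+1)$. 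Over a generic $(E,K_1(E))\in\mathcal{A}$, the fiber consists of cyclic lifts of $K_2(E)$ to order $p^3$ with $C\neq K_3(E)$. Counting via generators (a lift $P\in E[p^3]$ of a fixed generator $g$ of $K_2(E)$ is determined modulo units $u\in 1+p^2\mathbb{Z}/p^3$), one finds exactly $p$ cyclic lifts; removing $K_3(E)$ leaves $p-1$. Equivalently, among the $p+1$ cyclic order-$p$ subgroups of $E/K_2(E)$, one gives $C=K_3(E)$ and one (namely the image of $E[p]$) gives the non-cyclic subgroup $K_2(E)+E[p]$ that does not correspond to a point of $X_0(p^3)$, leaving $p-1$ cyclic non-canonical $C$.

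Next, the $\pm$-decomposition comes from the pairing $\mathcal{P}_{E,C}$ on $K_2(E)\simeq\mathbb{Z}/p^2$ valued in $\mu_p$. Fix a generator $g\in K_2(E)$ and a generator $Q$ of $K_3(E)$ with $pQ=g$; then $\mathcal{P}_{E,C}(g,g)=e_3(P,Q)\in\mu_p$ for $P\in C$ with $pP=g$ is well-defined. A Weil-pairing computation (using that $K_1(E)\subset K_3(E)$ is in the radical while $R\mapsto e_3(R,Q)$ induces an isomorphism $E[p]/K_1(E)\xrightarrow{\sim}\mu_p$) shows that as $C$ ranges over the $p-1$ fiber points, $\mathcal{P}_{E,C}(g,g)$ hits every nontrivial element of $\mu_p$ exactly once. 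The isomorphism class of $(K_2(E),\mathcal{P}_{E,C})$ is determined by the square-class of $\mathcal{P}_{E,C}(g,g)$ in $\mathbb{F}_p^{\times}/(\mathbb{F}_p^{\times})^2\simeq\{\pm\}$ (after identifying $\mu_p\setminus\{1\}\simeq\mathbb{F}_p^{\times}$), yielding $U_{2,1}(A)=U^+_{2,1}(A)\sqcup U^-_{2,1}(A)$ with each $\pi_{0,2}|_{U^\pm_{2,1}(A)}$ of degree $(p-1)/2$.

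Finally, to identify each $U^{\pm}_{2,1}(A)$ as a single annulus, I would embed $U_{2,1}(A)\hookrightarrow W_A(1)^4$ via $(X_i):=(\pi_{i,3-i}^{*}s)_{i=0}^3$, analogously to Corollary~\ref{prod}, cut out by the Kronecker relations $F_p^{\beta_0}(X_i,X_{i+1})=0$. The valuations $v(X_0)=h(E)\in(0,1/p(p+1))$, $v(X_1)=pv(X_0)$, $v(X_2)=p^2 v(X_0)$ force the relation $F_p^{\beta_0}(X_2,X_3)=0$ into Lemma~\ref{22} case (b).4, so $v(X_3)=v(X_2)/p$ and $X_3\equiv\zeta X_2^{1/p}$ modulo lower-order corrections for $\zeta\in\mu_p$. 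After excluding the branch corresponding to the non-cyclic $C=K_2(E)+E[p]$, the remaining $p-1$ branches form a $\mu_{p-1}$-torsor over $\mathcal{A}$; de Shalit's Theorem~\ref{thm} then allows identifying the Kummer character of this torsor with the Weil-pairing square-class from the previous step. Each $U^{\pm}_{2,1}(A)$ is thereby a connected degree-$(p-1)/2$ Kummer cover of $\mathcal{A}$, hence an annulus of width $\frac{i(A)/p(p+1)}{(p-1)/2}=\frac{2i(A)}{p(p^2-1)}$. The main obstacle is this last identification --- matching the Kummer character arising from the Kronecker polynomial with the square-class of the Weil pairing --- which requires carefully tracking the leading terms of $F_p^{\beta_0}$ through de Shalit's approximation.
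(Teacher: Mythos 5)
Your plan is correct in outline and ultimately rests on the same core technical step as the paper: embedding $U_{2,1}(A)$ into $W_A(1)^{\times 4}$ via $\prod_i\pi_{i,3-i}$ and exploiting the Kronecker relations. The paper works purely with the coordinates $X_0,\ldots,X_3$: from $F_p^{\beta_0}(X_1,X_2)=F_p^{\beta_0}(X_3,X_2)=0$ and $X_1\neq X_3$ it derives $X_1X_3(X_1-X_3)^{p-1}=pc_0+\cdots$, splits off the square root $X_1(X_1-X_3)^{(p-1)/2}=\pm(pc_0)^{1/2}+\cdots$, and then parametrizes each sign-choice by a single variable $t$ with $X_0=t^{(p-1)/2}+\cdots$, $X_1-X_3=\pm(pc_0)^{1/(p-1)}/t^p+\cdots$ and $0<v(t)<2/p(p^2-1)$, using $\gcd(p,(p-1)/2)=1$ to see that each sign gives a connected annulus of the stated width; the Weil pairing never appears in the proof. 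You instead lead with a moduli-theoretic degree count (the map $\pi_{0,2}$ has degree $p-1$ onto an annulus of width $i(A)/p(p+1)$, split by the pairing into halves of size $(p-1)/2$) and then propose to invoke the Kronecker coordinates to exhibit a Kummer structure, flagging at the end the matching of the resulting Kummer character with the Weil-pairing square-class as the delicate point. That matching is in fact unnecessary: once the Kronecker analysis shows $U_{2,1}(A)$ has \emph{exactly} two connected components, and since $U^{+}_{2,1}(A)$ and $U^{-}_{2,1}(A)$ are disjoint nonempty admissible opens that by definition partition $U_{2,1}(A)$ (both square-classes occur in every fiber, so both are nonempty), each $U^{\pm}_{2,1}(A)$ must be one of the two components, with no further identification required. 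So your plan is sound but slightly over-engineered: dropping the matching step collapses it into essentially the paper's proof, with your degree count reproducing the alternative argument sketched in the Remark following the lemma, where the width $2i(A)/p(p^2-1)$ is read off from the degree $(p-1)/2$ over the base annulus via \cite[Lemma 2.1]{CM}.
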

\begin{proof}
We only prove the assertions for $U_{2,1}(A).$
Let $(E,C) \in U_{2,1}(A).$
Note that $pC=K_2(E)$
and $0 <h(E)<1/p(p+1).$
For simplicity, we prove the assertions for case $i(A)=1.$
Similarly as Corollary \ref{prod},
the space $U_{2,1}(A)$ is identified with the following space,
 by the embedding $\prod_{0 \leq i \leq 3} \pi_{i,3-i},$
$$\{(X_i)_{i=0}^{i=3} \in W_A(1)^{ \times 4}|\ 0< v(X_0)<\frac{1}{p(p+1)}, v(X_1)=pv(X_0),
v(X_2)=p^2v(X_0), v(X_3)=pv(X_0),$$
$X_1 \neq X_3, F_p^{\beta_0}(X_i,X_{i+1})=0\ (0 \leq i \leq 2)\}.$
Thereby, the equations $F_p^{\beta_0}(X_i,X_{i+1})=0\ (0 \leq i \leq 2)$
induce the followings
$$X_1=X_0^p+\frac{pc_0}{X_0}+pg(X_0), 
X_2=X_1^p+\frac{pc_0}{X_1}+pg(X_1)=
X_3^p+\frac{pc_0}{X_3}+pg(X_3)$$
where we have $v(g(X_i))>0\ (i=1,2,3).$
Hence, by $X_1 \neq X_3,$ we acquire the following equality 
\begin{equation}\label{ty1}
X_1X_3(X_1-X_3)^{p-1}=pc_0+{\rm higher}\ {\rm terms}.
\end{equation}
We set $Z:=X_1-X_3.$
Then, by (\ref{ty1}) and $v(X_1) < 1/(p+1),$ we acquire $v(X_1^2Z^{p-1})=1$
and $v(X_1)<v(Z).$
Thereby, we obtain the following by (\ref{ty1})
$$X_1^2Z^{p-1}=pc_0+{\rm higher}\ {\rm terms}$$
where the valuation of the higher terms is bigger than $1.$
This is decomposed to the following two equalities
\begin{equation}\label{ty2} 
X_1Z^{(p-1)/2}={\pm}(pc_0)^{1/2}+{\rm higher}\ {\rm terms}
\end{equation}
where the valuation of the higher terms is bigger than $1/2.$
Substituting $X_1=X_0^p+(pc_0/X_0)+pg(X_0)$
to this equality, the following hols
$$X_0^pZ^{(p-1)/2}={\pm}(pc_0)^{1/2}+{\rm higher}\ {\rm terms}.$$
Therefore, we can write $X_0=t^{(p-1)/2}+{\rm higher}\ {\rm terms}$
and
$Z={\pm}(pc_0)^{1/(p-1)}/t^p+{\rm higher}\ {\rm terms}$
with $0<v(t)<2/p(p^2-1).$
Hence, the required assertions follow from $(p,(p-1)/2)=1.$
\end{proof}
\begin{remark}
The spaces $U^{\pm}_{2,1}(A)$ and $U_{1,2}^{\pm}(A)$ map onto $B:=A(p^{-\frac{i(A)}{p(p+1)}},1)$
via $\pi_{0,2}$ and $\pi_{2,0}$ respectively,
but each with degree $(p-1)/2$
as proved in \cite[the proof of Theorem 9.2]{CM}. Then, we know that
the spaces $U_{2,1}^{\pm}(A), U_{1,2}^{\pm}(A)$
are annuli of width $2i(A)/p(p^2-1)$ by \cite[Lemma 2.1]{CM}.
\end{remark}
\begin{lemma}(\cite[Theorem 9.2]{CM})\label{lee3}
Let the notation be as above.
\\1.\ We have the followings
$$V_2(A) \cap U(A)=\{(E,C) \in W_A(p^3)|\frac{1}{p(p+1)}<h(E)<\frac{1}{2p},|K(E) \cap C|=p^2\},$$
$$V_1(A) \cap U(A)=\{(E,C) \in W_A(p^3)|\frac{1}{2p}<h(E)<\frac{1}{p+1},|K(E) \cap C|=p^2\}.$$
\\2.\ The intersections $V_2(A) \cap U(A)$ and $V_1(A) \cap U(A)$ are annuli of width $\frac{(p-1)i(A)}{2p^2(p+1)}$
\end{lemma}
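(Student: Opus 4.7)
\emph{Proof plan.} The plan is to first establish the moduli descriptions in (1) using Theorem~\ref{Ka}, and then to obtain (2) by exhibiting each intersection as a connected étale degree-$p$ cover of an annulus whose width is explicit.

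For (1), I would pointwise analyze $\pi_{1,1}(E,C)=(E/p^2C,\,pC/p^2C)\in W_A(p)$ according to the size of $|K(E)\cap C|$. The key observation is that Theorem~\ref{Ka}.2, 4, 5 together imply that $pC/p^2C$ equals $K_1(E/p^2C)$ if and only if $p^2C=K_1(E)$ and $pC=K_2(E)$, equivalently $|K(E)\cap C|\geq p^2$; in this case $v(x_A(\pi_{1,1}(E,C)))=i(A)\,p\,h(E)$ by Theorem~\ref{Ka}.2. For $|K(E)\cap C|\leq p$, repeating the computation with the non-canonical formulas of Theorem~\ref{Ka}.5 gives instead $v=i(A)(1-h(E)/p^2)$ or $v=i(A)(p-1+h(E))/p$, and a direct arithmetic check shows that neither value can meet the target intervals $(i(A)/(p+1),i(A)/2)$ or $(i(A)/2,p\,i(A)/(p+1))$. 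Solving $v=i(A)\,p\,h(E)$ in those intervals yields the claimed Hasse-invariant ranges; since $h(E)>1/(p(p+1))$ throughout, $K_3(E)$ does not exist, so $|K(E)\cap C|=p^2$ exactly.

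For the width in (2), I embed $V_2(A)\cap U(A)$ into $W_A(1)^4$ via Corollary~\ref{prod}, with coordinates $(X_0,X_1,X_2,X_3)$ satisfying $F_p^{\beta_0}(X_i,X_{i+1})=0$. Iterating Lemma~\ref{22} under the valuation constraints gives $v(X_1)=p\,v(X_0)$ (case 1), $v(X_2)=1-v(X_1)$ (case 3), and for $X_3$ a non-canonical lift of $X_2$ one has $v(X_3)=v(X_2)/p$ (case 4) with $p$ choices; in particular $v(X_0X_3)=1/p$. Extracting dominant terms as in Corollary~\ref{key} yields $X_1\equiv X_0^{p}$, $X_1X_2\equiv pc_0$, and $X_3^{p}\equiv X_2$ modulo lower-order terms depending on $X_0$, and multiplying these produces
\[
(X_0X_3)^{p}\equiv pc_0\pmod{\phi(X_0)}
\]
for an $X_0$-dependent correction $\phi$ of valuation strictly greater than $1$. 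Setting $\tau=X_0X_3$, this realizes $V_2(A)\cap U(A)$ as the vanishing locus of $\tau^{p}-pc_0-\phi(X_0)$, which is irreducible over the Tate algebra of the $X_0$-annulus; the projection to that annulus is therefore a connected étale cover of degree $p$.

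The $X_0$-annulus has width $\frac{1}{2p}-\frac{1}{p(p+1)}=\frac{p-1}{2p(p+1)}$, and a connected étale degree-$p$ cover of an annulus is itself an annulus of width $1/p$ of the base (a Kummer-type $p$-th-root extension); this gives width $\frac{(p-1)i(A)}{2p^2(p+1)}$ for $V_2(A)\cap U(A)$ after inserting the factor $i(A)$. The argument for $V_1(A)\cap U(A)$ is entirely parallel, with the roles of $v(X_0)$ and $v(X_3)$ exchanged (the non-canonical branching still occurring at $X_2\to X_3$), and the symmetry $F_p^{\beta_0}(X,Y)=F_p^{\beta_0}(Y,X)$ of Lemma~\ref{0} then gives the same width. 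The hard part will be the detailed bookkeeping needed to obtain $(X_0X_3)^{p}\equiv pc_0\pmod{\phi(X_0)}$ with $\phi$ genuinely $X_0$-dependent, since this is what ensures irreducibility of the polynomial in $\tau$ and hence connectedness of the cover (as opposed to a disjoint union of $p$ annuli); it must be extracted carefully from the Kronecker expansions already recorded in Corollary~\ref{key}.
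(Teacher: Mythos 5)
Your plan for part (1) matches the paper's: read off $v(x_A(\pi_{1,1}(E,C)))$ in terms of $h(E)$ via Theorem~\ref{Ka} and solve for the Hasse-invariant ranges.

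For part (2) the high-level strategy — embed into $W_A(1)^{\times 4}$ by $\prod\pi_{i,3-i}$, use the Kronecker relations, and realize the space via the product $X_0X_3$ — is exactly the paper's. But there is a genuine gap in the middle step, and it is not merely bookkeeping. When you multiply the expansions you do not get $(X_0X_3)^p\equiv pc_0\pmod{\phi(X_0)}$ with $\phi$ a function of $X_0$ alone. The dominant correction that survives is $-pc_0\,X_0^p/X_3=-pc_0\,X_0^{p+1}/\tau$ (with $\tau=X_0X_3$), which depends on $\tau$. So as written, your equation $\tau^p-pc_0-\phi(X_0)=0$ is not the defining relation; the actual relation is implicit in $\tau$ (degree $p+1$ after clearing the denominator), and no irreducibility-of-a-monic-degree-$p$-polynomial argument applies directly. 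The paper resolves this with an extra substitution you have not made: fix a root $\gamma$ with $\gamma^p=pc_0$, write $\tau=\gamma+Z$ with $v(Z)>v(\gamma)$, and expand $1/\tau\approx 1/\gamma$; only then does the correction become the genuinely $X_0$-dependent $-pc_0X_0^{p+1}/\gamma$, yielding $Z^p\approx -pc_0X_0^{p+1}/\gamma$, and after the rescaling $Z_1:=-Z/(\gamma_1X_0)$ with $\gamma_1^p=pc_0/\gamma$ one arrives at $X_0=Z_1^p+\text{higher terms}$. The key points your sketch misses are the choice of $\gamma$ and the expansion of $1/\tau$; without these the ``$\phi$ depends only on $X_0$'' claim is false.

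Relatedly, the paper does not invoke a general fact that a connected étale degree-$p$ cover of an annulus is an annulus of $1/p$ the width. It instead reads the width off the explicit parameterization: since $X_0=Z_1^p+\cdots$ and $v(X_0)\in\bigl(\frac{1}{p(p+1)},\frac{1}{2p}\bigr)$, one has $v(Z_1)\in\bigl(\frac{1}{p^2(p+1)},\frac{1}{2p^2}\bigr)$, giving width $\frac{p-1}{2p^2(p+1)}$ directly. Your heuristic gives the same number, but the explicit change of variables is what actually establishes that the space is an annulus and that the cover is connected (equivalently that the rescaled $\phi$ is not a $p$-th power in the Tate algebra, which follows from the appearance of the exponent $p+1$ on $X_0$). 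If you carry out the $\tau=\gamma+Z$ step and the rescaling to $Z_1$, your argument converges to the paper's.
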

\begin{proof}
Let $(E,C) \in V_2(A) \cap U(A).$
By the definitions of $V_2(A)$ and $U(A),$ we have $\pi_{1,1}(E,C) \in A(p^{-i(A)/2},p^{-i(A)/(p+1)}).$
By using Theorem \ref{Ka}, we obtain the assertion 1 for $V_2(A) \cap U(A).$
For $V_1(A) \cap U(A)$, we prove 
the required assertion
in the same way as above. We omit the detail. 

We prove the assertion 2.
We prove the assertion only for $V_2(A) \cap U(A),$
because the argument is very similar.
For simplicity, we assume $i(A)=1.$
By considering 
an embedding $\prod_{0 \leq i \leq 3} \pi_{i,3-i}$ as in Corollary \ref{prod},
we acquire the following description of $V_2(A) \cap U(A)$
$$\{(X_i)_{i=0}^{i=4} \in W_A(1)^{\times 4}|\frac{1}{p(p+1)}<v(X_0)<\frac{1}{2p}, v(X_1)=pv(X_0),
v(X_1)+v(X_2)=1,$$
$$v(X_3)=v(X_2)/p, F_p^{\beta_0}(X_i,X_{i+1})=0\ (0 \leq i \leq 2)\}.$$
Then, we have
$1/(p+1)<v(X_1)<1/2<v(X_2)<p/(p+1).$
The equations $F_{p}^{\beta_0}(X_i,X_{i+1})=0\ (0 \leq i \leq 2)$ induce the following equalities 
$$X_1=X_0^p+\frac{pc_0}{X_0}+pg(X_0), X_2=X_1^p+\frac{pc_0}{X_1}+pg(X_1)=X_3^p+\frac{pc_0}{X_3}+pg(X_3).$$
We rewrite $X_2=X_1^p+(pc_0/X_1)+pg(X_1)$ as follows
\begin{equation}\label{bv}
X_1X_2=pc_0+X_1^{p+1}+pX_1g(X_1).
\end{equation}
Note that $v(X_0)+v(X_3)=1/p, 1/p(p+1)<v(X_0)<1/2p<v(X_3)<1/(p+1).$
Substituting $X_1=X_0^p+(pc_0/X_0)+pg(X_0)$ and $X_2=X_3^p+(pc_0/X_3)+pg(X_3)$ to 
the equality (\ref{bv}), we obtain
the following
\begin{equation}\label{gi}
X_0^pX_3^p+pc_0\frac{X_0^p}{X_3}
=pc_0+{\rm higher}\ {\rm terms}
\end{equation}
where the valuation of the higher terms in the above equality 
is bigger than $((p-1)/p)+(p+1)v(X_0).$
We choose a root $\gamma$ such that $\gamma^p=pc_0.$
We put $X_0X_3=\gamma+Z$
with $v(Z)>v(\gamma)=1/p.$
By substituting $X_0X_3=\gamma+Z$ to (\ref{gi}), we acquire
the following
\begin{equation}\label{st3}
Z^p+pc_0X_0^{p+1}/\gamma
={\rm higher}\ {\rm terms}
\end{equation}
where the valuation of the higher terms
is bigger than $v(Z^p).$
Hence, we acquire $v(Z)=\bigl((p-1)/p^2\bigr)+\bigl((p+1)v(X_0)/p\bigr).$
Since the condition $v(Z)>v(\gamma)$
is equivalent to $v(X_0)>1/p(p+1),$ we acquire $v(Z)>1/p.$
We choose a root $\gamma_1$ satisfying $\gamma_1^p=pc_0/\gamma.$
Note that $v(Z)>v(\gamma_1)+v(X_0).$
We change a variable as follows
$Z_1:=-Z/\gamma_1X_0.$
Then, the equality (\ref{st3}) is written as follows
$$X_0=Z_1^p+{\rm higher}\ {\rm terms}$$
with $1/p^2(p+1)<v(Z_1)<1/2p^2.$
Hence, the required assertion follows.
\end{proof}

\subsection{Construction of the stable covering of $X_0(p^4)$}
In this subsection, we will give the stable covering of $X_0(p^4)$
similarly as in \cite[Section 9]{CM}.
Now, suppose that $A$ is any supersingular elliptic curve mod $p.$
Identify $W_A(p)$
with $A(p^{-i(A)},1)$ as explained in subsection 2.2.
\begin{lemma} Let the notation be as in subsections 2.2 and 2.3.
\\1. We have the following inclusions
$$\pi_{1,2}(\mathbf{Y}^A_{3,1}) \subset \mathbf{C}^A_{{i(A)/p(p+1)}},
\pi_{1,2}(\mathbf{Y}^A_{2,2}) \subset \mathbf{C}^A_{i(A)/(p+1)},
\pi_{1,2}(\mathbf{Y}^A_{1,3}) \subset \mathbf{C}_{i(A)p/(p+1)},$$ and
$$\pi_{1,2}(\mathbf{Z}^A_{2,1}) \subset \mathbf{C}_{i(A)/2p},
\pi_{1,2}(\mathbf{Z}^A_{1,2}) \subset \mathbf{C}^A_{i(A)/2}.$$
\\2. We have the following inclusions
$$\pi_{2,1}(\mathbf{Y}^A_{3,1}) \subset \mathbf{C}^A_{i(A)/(p+1)},
\pi_{2,1}(\mathbf{Y}^A_{2,2}) \subset \mathbf{C}^A_{i(A)p/(p+1)},
\pi_{2,1}(\mathbf{Y}^A_{1,3}) \subset \mathbf{C}^A_{i(A)(1-/p(p+1))},$$
and
$$\pi_{2,1}(\mathbf{Z}^A_{2,1}) \subset \mathbf{C}^A_{i(A)/2},
\pi_{2,1}(\mathbf{Z}^A_{1,2}) \subset \mathbf{C}^A_{i(A)(1-1/2p)}.$$
\end{lemma}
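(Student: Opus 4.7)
The proof is a direct application of the moduli-theoretic descriptions recorded in Lemma \ref{le} together with the identification $W_A(p) \simeq A_{\mathbb{Q}_{p^2}}(p^{-i(A)},1)$ via $x_A$ from subsection 2.2, whose defining property is that $v(x_A(E',C'))=i(A)h(E')$ when $C'=K_1(E')$ and $v(x_A(E',C'))=i(A)(1-h(E'/C'))$ otherwise. My plan is first to unwind $\pi_{a,b}=\pi_v^a \circ \pi_f^b$ moduli-theoretically, and then to evaluate $v(x_A \circ \pi_{a,b})$ on each of the five spaces $\mathbf{Y}^A_{3,1}, \mathbf{Y}^A_{2,2}, \mathbf{Y}^A_{1,3}, \mathbf{Z}^A_{2,1}, \mathbf{Z}^A_{1,2}$ by a single lookup in Lemma \ref{le}.

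First, iterating $\pi_f(E,C)=(E,pC)$ and $\pi_v(E,C')=(E/C'[p],C'/C'[p])$, a short induction on $a$ yields, for $(E,C)\in X_0(p^n)$ with $C$ cyclic of order $p^n$,
$$\pi_{a,b}(E,C)\;=\;(E/p^{n-a}C,\ p^bC/p^{n-a}C).$$
Specializing to $n=4$ gives the two formulas
$$\pi_{1,2}(E,C)=(E/p^3C,\ p^2C/p^3C), \qquad \pi_{2,1}(E,C)=(E/p^2C,\ pC/p^2C),$$
so in both parts of the lemma the task is to identify a specific subquotient of the form $(E/p^iC,\ p^{i-1}C/p^iC)$ and decide whether the second coordinate is the canonical subgroup of order $p$ of the first.

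For each of the ten inclusions I then read off from Lemma \ref{le} two pieces of data: (i) whether the subgroup $p^{i-1}C/p^iC$ lies in part (a) of Lemma \ref{le} (hence is canonical, with the prescribed value of $h(E/p^iC)$) or in part (b) (hence is non-canonical, with $h(E/p^iC) \geq p/(p+1)$ in the too supersingular case); (ii) the Hasse invariant of either $E/p^iC$ or its further quotient $E/p^{i-1}C$, again read directly from Lemma \ref{le}. For example, if $(E,C) \in \mathbf{Z}^A_{2,1}$ then Lemma \ref{le}.2(a) with $i=2$ says $p^2C/p^3C$ is the canonical subgroup of $E/p^3C$ and $h(E/p^3C)=1/(2p)$, giving $v(x_A(\pi_{1,2}(E,C)))=i(A)/(2p)$; if $(E,C) \in \mathbf{Y}^A_{2,2}$ then Lemma \ref{le}.1(b) with $i=0$ makes $pC/p^2C$ non-canonical in $E/p^2C$ while $i=1$ gives $h(E/pC)=1/(p+1)$, so $v(x_A(\pi_{2,1}(E,C)))=i(A)(1-1/(p+1))=i(A)p/(p+1)$. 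Each of the remaining inclusions is handled identically.

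There is no substantive obstacle; the entire lemma is bookkeeping. The only point to watch is the canonicity test, which is dictated by whether the index $i$ of the quotient $E/p^iC$ falls into the range governed by part (a) of Lemma \ref{le} (indices $b \leq i \leq n$, giving canonical subgroups) or part (b) (indices $0 \leq i \leq b-1$, giving non-canonical ones, with $E/p^bC$ itself being too supersingular in the $\mathbf{Y}$-case). Once this classification is tabulated for the two relevant indices $i=2,3$ across the five spaces, the ten valuations $v(x_A)$ drop out and match the claimed circles.
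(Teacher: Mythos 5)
Your proof is correct and uses essentially the same route as the paper: unwind $\pi_{a,b}(E,C)=(E/p^{n-a}C,\ p^bC/p^{n-a}C)$ and read off the canonicity and Hasse invariant data from Lemma \ref{le}, then apply the valuation formula $v(x_A(E',C'))=i(A)h(E')$ (canonical case) or $i(A)(1-h(E'/C'))$ (non-canonical case). This is exactly the argument the paper gives for the analogous statement over $X_0(p^3)$ (there it explicitly says ``This follows from Lemma \ref{le}''). Note that the paper's one-line proof of the $X_0(p^4)$ lemma cites ``Lemma \ref{key}'' (which is actually Corollary \ref{key}, the Kronecker-polynomial change-of-variables statement); this appears to be a typo for Lemma \ref{le}, since the latter gives the immediate bookkeeping you describe while the former would require redoing the valuation analysis through the Kronecker polynomial. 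Your spot-checks (e.g.\ $\mathbf{Z}^A_{2,1}$ under $\pi_{1,2}$ giving $h(E/p^3C)=1/2p$, and $\mathbf{Y}^A_{2,2}$ under $\pi_{2,1}$ giving $1-h(E/pC)=p/(p+1)$) are correct, and the same pattern handles the remaining cases.
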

\begin{proof}
The assertions follow immediately from Lemma \ref{key}.
We omit the detail.
\end{proof}
Then, we define five subspaces of $W_A(p^4)$ as follows
$$\mathbf{Y}^A_{3,1} \subset V_3(A):=\pi_{1,2}^{-1}A(p^{-i(A)/2p},1) \cap \pi_{2,1}^{-1}A(p^{-i(A)/2},1),$$
$$\mathbf{Y}^A_{2,2} \subset V_2(A):=\pi_{1,2}^{-1}A(p^{-i(A)/2},1) \cap \pi_{2,1}^{-1}A(p^{-i(A)},p^{-i(A)/2}),$$
$$\mathbf{Y}^A_{1,3} \subset V_1(A):=\pi_{1,2}^{-1}A(p^{-i(A)},p^{-i(A)/2}) \cap \pi_{2,1}^{-1}A(p^{-i(A)},p^{-(1-(1/2p))i(A)}),$$
$$\mathbf{Z}^A_{2,1} \subset U_2(A):=\pi_{1,2}^{-1}A(p^{-i(A)/(p+1)},p^{-i(A)/p(p+1)}) \cap 
\pi_{2,1}^{-1}A(p^{-pi(A)/(p+1)},p^{-i(A)/(p+1)}),$$
$$\mathbf{Z}^A_{1,2} \subset U_1(A):=\pi_{1,2}^{-1}A(p^{-pi(A)/(p+1)},p^{-i(A)/(p+1)}) \cap \pi_{2,1}^{-1}
A(p^{-i(A)(1-1/p(p+1))},p^{-pi(A)/(p+1)}).$$
Let $\mathcal{S}_1(A)$ and $\mathcal{S}_2(A)$
 denote the sets of singular residue classes of $\mathbf{Z}^A_{1,2}$
and of $\mathbf{Z}^A_{2,1}$ respectively
and, for each $S \in \mathcal{S}_i(A)\ (i=1,2),$
let $\mathbf{X}_S$ be the underlying affinoid of $S$
as in \cite[Section 9]{CM}.
Let $\hat{U}_i(A)\ (i=1,2)$ denote the wide open given by 
$$\hat{U}_i(A):=U_i(A) \backslash \bigcup_{S \in \mathcal{S}_i(A)}\mathbf{X}_S.$$
In the same way, we define $\mathcal{T}(A)$
to be the set of the singular residue classes of $\mathbf{Y}^A_{2,2}$
and, for each $T \in \mathcal{T}(A),$
let $\mathbf{X}_T$ be the underlying affinoid of $T.$
Let $\hat{V}_2(A)$ denote the wide open given by 
$$\hat{V}_2(A):=V_2(A) \backslash \bigcup_{T \in \mathcal{T}(A)}\mathbf{X}_T.$$

We define the spaces 
 $W_{a,b}\ (a+b=4, a \geq 0, b \geq 0)$,
 which cover the ordinary locus.
For $(a,b)=(4,0),(3,1),$ we put
$$W_{a,b}:=\{(E,C) \in X_0(p^4)|\ |K(E)|\geq p^4, |K(E) \cap C|=p^a\}.$$
For $(a,b)=(0,4),(1,3),$ we set $W_{a,b}:=w_{4}(W_{b,a}).$
Further, we define
$$W_{2,2}:=\{(E,C) \in X_0(p^4)|\ |K(E)|\geq p^3,|K(E) \cap C|=p^2\}.$$
We define as follows $W^{\pm}_{4,0}:=W_{4,0}, W^{\pm}_{0,4}:=W_{0,4},$
$$W^{\pm}_{3,1}:=\{(E,C) \in W_{3,1}|(K_3(E),\mathcal{P}_{E,C}) \simeq (\mathbb{Z}/p^3\mathbb{Z},e^{\pm})\},$$
$$W^{\pm}_{2,2}:=\{(E,C) \in W_{2,2}|(K_2(E),\mathcal{P}_{E,C}) \simeq (\mathbb{Z}/p^2\mathbb{Z},e^{\pm})\}.$$
See the previous subsection for the pairing $e^{\pm}$ on the canonical subgroups.
For $a<b,$ we set $W^{\pm}_{a,b}=w_4(W^{\pm}_{b,a}).$

\begin{lemma}
We put $U_{a,b}:=W_{a,b} \cap W_A(p^4).$
Then, the space $W_A(p^4)$
is the union of the following
spaces
$\{V_i(A)\}_{1 \leq i \leq 3},\{U_j(A)\}_{j=1,2}$
and $\{U_{a,b}\}_{a+b=4,a\geq 0, b \geq 0}.$
\end{lemma}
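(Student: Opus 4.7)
The plan is to analyse the chain of iterated quotients $F_j := E/p^jC$ for $0 \leq j \leq 4$ (so $F_4 = E$ and $F_0 = E/C$, with each $F_j \to F_{j-1}$ a $p$-isogeny of kernel $p^{j-1}C/p^jC$), and to case-split on the Hasse invariants $h(F_j)$. Setting $\alpha := v(x_A(\pi_{1,2}(E,C)))$ and $\beta := v(x_A(\pi_{2,1}(E,C)))$, both in $(0,i(A))$, the parametrisation of $W_A(p)$ recalled in subsection~2.2 expresses $\alpha$ and $\beta$ as simple functions of $h(F_3)$ and $h(F_2)$ together with the canonical/non-canonical character of the kernels $p^2C/p^3C$ and $pC/p^2C$; so deciding membership in the listed subspaces reduces to a finite case analysis.

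First, I would dispose of the ordinary regime. If $h(E) < 1/p^2(p+1)$, then Theorem~\ref{Ka} applied inductively yields $|K(E)| \geq p^4$, hence $|K(E) \cap C| = p^a$ for a unique $a \in \{0,1,\dots,4\}$, placing $(E, C) \in W_{a, 4-a} \cap W_A(p^4) = U_{a, 4-a}$. The dual case $h(E/C) < 1/p^2(p+1)$ is handled symmetrically via the Atkin-Lehner involution $w_4$. The intermediate case $h(E) \in [1/p^2(p+1), 1/p(p+1))$ with $K_2(E) \subset C$ but $K_3(E) \not\subset C$ places $(E, C) \in U_{2,2}$.

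Next, assume $(E, C)$ is not in any $U_{a,b}$. Then some $F_{j_0}$ with $j_0 \in \{1,2,3\}$ must be too supersingular (i.e.\ $h(F_{j_0}) \geq p/(p+1)$); otherwise Theorem~\ref{Ka} parts 2, 4, 5 would propagate the canonical chain of length $4$ starting from $E$ or from $E/C$, returning us to an ordinary case. For each $j_0$, Theorem~\ref{Ka} (exactly as in the proof of Lemma~\ref{le}) determines the remaining $h(F_j)$ and the canonical/non-canonical pattern of each kernel, and a direct computation of $(\alpha,\beta)$ then shows that $(E,C) \in V_3(A)$ when $j_0 = 1$ (the case containing $\mathbf{Y}^A_{3,1}$), $(E,C) \in V_2(A)$ when $j_0 = 2$ (containing $\mathbf{Y}^A_{2,2}$), and $(E,C) \in V_1(A)$ when $j_0 = 3$ (containing $\mathbf{Y}^A_{1,3}$). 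If no $F_j$ is too supersingular, then the chain must still meet an Atkin-Lehner circle $h(F_j) = 1/2p^k$ for some $j \in \{1,2,3\}$ and some $k$ (otherwise $(E, C)$ would fall into the ordinary regime), and the same bookkeeping lands $(E,C) \in U_2(A)$ (near $\mathbf{Z}^A_{2,1}$) when $h(F_2)$ is near $1/2$, or $(E,C) \in U_1(A)$ (near $\mathbf{Z}^A_{1,2}$) for the dual crossing.

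The main obstacle I anticipate is verifying that the five rectangles of $(\alpha,\beta)$-valuations defining $V_3(A), V_2(A), V_1(A), U_2(A), U_1(A)$ cover the supersingular locus without gaps. The endpoints $i(A)/2p$, $i(A)/(p+1)$, $p\,i(A)/(p+1)$, $i(A)(1-1/2p)$ are tuned so that each rectangle contains exactly one $\mathbf{Y}$- or $\mathbf{Z}$-stratum at its centre; checking the overlap amounts to tracing $(\alpha, \beta)$ as the canonical/non-canonical pattern of the chain varies, ensuring each admissible pattern lands inside at least one rectangle or in an ordinary piece. Carrying this out reduces to repeatedly applying Theorem~\ref{Ka} at the finitely many boundary configurations between the rectangles, which is mechanical but requires care near $h(F_j) = 1/(p+1)$ and $h(F_j) = 1/2p^k$.
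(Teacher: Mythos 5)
Your approach---reduce everything to Theorem~\ref{Ka} applied along the chain of quotients $F_j = E/p^jC$ and read off the valuations $\alpha = v(x_A(\pi_{1,2}(E,C)))$, $\beta = v(x_A(\pi_{2,1}(E,C)))$---is the same as the paper's, which simply cites Theorem~\ref{Ka} and omits all detail. So the strategy is right, but there is a genuine gap in your case analysis. After disposing of the ordinary pieces you assert that if $(E,C)$ lies in none of the $U_{a,b}$ then either some $F_{j_0}$ with $j_0 \in \{1,2,3\}$ is too supersingular, or else some $F_j$ lies on an Atkin--Lehner circle $h(F_j) = 1/2p^k$. Neither implication holds. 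Take $(E,C)$ with $h(E) = 1/3p^2$ and $pC = K_3(E)$: here $K_4(E)$ is undefined since $h(E) > 1/p^2(p+1)$, and $C/K_3(E)$ is forced to be non-canonical in $E/K_3(E)$. Theorem~\ref{Ka} gives $h(F_4) = 1/3p^2$, $h(F_3) = 1/3p$, $h(F_2) = 1/3$, $h(F_1) = 2/3$, $h(F_0) = 2/3p$, all strictly below $p/(p+1)$ and none of the form $1/2p^k$. Meanwhile $|K(E)| = p^3 < p^4$, $|K(E) \cap C| = p^3 \neq p^2$, and $h(E/C) = 2/3p > 1/p(p+1)$, so this point lies in no $U_{a,b}$.

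The covering nonetheless succeeds here: the point sits in $V_3(A) \cap U_2(A)$ since $\alpha = 1/3p$ lies strictly in $(1/p(p+1), 1/2p)$ and $\beta = 1/3$ lies strictly in $(1/(p+1), 1/2)$. But your dichotomy cannot detect this. The issue is that $V_i(A)$ and $U_j(A)$ are preimages of open \emph{annuli}, so they occupy whole rectangles in $(\alpha,\beta)$-space, and the generic configurations lying between the distinguished circles are precisely what those rectangles are designed to absorb. Your closing remark about ``tracing $(\alpha,\beta)$ as the canonical/non-canonical pattern varies'' is exactly the right move, and is what the paper's appeal to Theorem~\ref{Ka} amounts to; but the dichotomy (too-supersingular versus Atkin--Lehner) that precedes it does not partition the cases and would leave points such as the one above unaccounted for. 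The repair is to case-split on the canonical/non-canonical pattern of the chain (equivalently on $|K(E) \cap C|$ together with which $F_j$, if any, are too supersingular), compute the full range of $(\alpha,\beta)$ for each pattern, and verify those ranges are covered by the five rectangles together with the ordinary pieces.
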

\begin{proof}
The assertion follows from Theorem \ref{Ka}. We omit the detail.
\end{proof}

We define a covering $\mathcal{C}_0(p^4)$ of $X_0(p^4)$
as the union of the following sets
$$\{W^{\pm}_{a,b}|a,b \geq 0, a+b=4\}$$
and over all supersingular curves $A$ of
$$\{V_1(A),V_3(A),\hat{V}_2(A),\hat{U}_1(A),\hat{U}_2(A)\} \cup
\mathcal{S}_1(A) \cup \mathcal{S}_2(A) \cup \mathcal{T}(A).$$
We put $U^{\pm}_{a,b}(A):=W^{\pm}_{a,b} \cap W_A(p^4).$
In the following, we will prove that all intersections of
the elements of the covering $\mathcal{C}_0(p^4)$
are annuli.
\begin{lemma}\label{Lee1}
Let the notation be as above.
Then, the spaces $U_{4,0}(A)$ and $U_{0,4}(A)$
are annuli of width $\frac{i(A)}{p^2(p+1)}.$
\end{lemma}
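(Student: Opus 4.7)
\textit{Proof proposal.} My plan is to mirror the argument of Lemma~\ref{lee1} almost verbatim, since the proof depends only on the fact that when the canonical subgroup of maximal order determines $C$, the space $U_{n,0}(A)$ pulls back to a sub-annulus of $W_A(p)$.

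First, I would reduce to $U_{4,0}(A)$ alone: the Atkin-Lehner involution $w_4$ interchanges $W_{4,0}$ and $W_{0,4}$ while preserving $W_A(p^4)$, so it induces a rigid analytic isomorphism $U_{0,4}(A) \simeq U_{4,0}(A)$.

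Next I would give the moduli description of $U_{4,0}(A)$. A point $(E,C) \in U_{4,0}(A)$ satisfies $|K(E)| \geq p^4$ and $|K(E) \cap C| = p^4$, forcing $C = K_4(E)$. Iterating Theorem~\ref{Ka}.2 three times (so that each intermediate Hasse invariant stays below $1/(p+1)$ and one has $h(E/K_j(E)) = p^j h(E)$ for $0 \leq j \leq 3$) shows that $K_4(E)$ exists precisely when $h(E) < 1/p^2(p+1)$. Since $\bar{E} \simeq A$ is supersingular one also has $h(E) > 0$, and conversely every $E$ with $0 < h(E) < 1/p^2(p+1)$ yields a unique $(E, K_4(E)) \in U_{4,0}(A)$.

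Finally I would apply the forgetful map $\pi_{0,3}:(E,C) \mapsto (E, p^3 C)$. On $U_{4,0}(A)$ we have $p^3 K_4(E) = K_1(E)$, so $\pi_{0,3}$ lands in $W_A(p)$. By the choice of parameter $x_A$ in subsection~2.2, $v(x_A(E, K_1(E))) = i(A) h(E)$, so $\pi_{0,3}$ identifies $U_{4,0}(A)$ with $\{0 < v(x_A) < i(A)/p^2(p+1)\} = A_{\mathbb{Q}_{p^2}}(p^{-i(A)/p^2(p+1)}, 1)$, an annulus of width $i(A)/p^2(p+1)$. The only subtlety, which is not really an obstacle, is upgrading this bijection on $\mathbb{C}_p$-points to a genuine rigid analytic isomorphism; Katz's canonical-subgroup section $s_n$ reviewed in subsection~2.2 provides a rigid analytic inverse $(E, K_1(E)) \mapsto (E, K_4(E))$ on the relevant locus, which closes the argument.
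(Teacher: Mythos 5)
Your proposal matches the paper's proof of Lemma~\ref{Lee1} essentially verbatim: reduce to $U_{4,0}(A)$ via the Atkin--Lehner involution, identify the moduli description $C=K_4(E)$ with $0<h(E)<1/p^2(p+1)$, and show $\pi_{0,3}$ carries $U_{4,0}(A)$ isomorphically onto the annulus $A(p^{-i(A)/p^2(p+1)},1)$ using $v(x_A(E,K_1(E)))=i(A)h(E)$. The only addition is your explicit remark on upgrading the set-theoretic identification to a rigid analytic isomorphism via Katz's canonical-subgroup section, which the paper leaves implicit.
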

\begin{proof}
We prove the assertion in the same way as in Lemma \ref{lee1}.
It suffices to show the required assertion for $U_{4,0}(A),$
since $U_{0,4}(A)$ is isomorphic to $U_{4,0}(A)$
by the Atkin-Lehner involution. Let $(E,C) \in U_{4,0}(A)$.
Then, we have $K_4(E)=C$ and $0 <h(E)<1/p^2(p+1).$
Therefore, the map $\pi_{0,3}$ induces an  isomorphism from $U_{4,0}(A)$
to an annulus $A(p^{-i(A)/p^2(p+1)},1)$.
Hence, the required assertion follows.
\end{proof}

\begin{lemma}\label{Lee2}
Let the notation be as above.
For $(a,b)=(3,1),(1,3),$ the space $U_{a,b}(A)$
has two connected components $U^{\pm}_{a,b}(A).$
Furthermore, the spaces $U^{\pm}_{a,b}(A)$
are annuli of width $\frac{2i(A)}{p^2(p^2-1)}.$
\end{lemma}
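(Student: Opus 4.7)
The plan is to mimic the proof of Lemma \ref{lee2} with one extra iteration of the Kronecker recursion. By the Atkin-Lehner involution $w_4$, it is enough to treat $U_{3,1}(A)$, and I would first assume $i(A)=1$; the general case follows by the standard rescaling of the parameter $x_A$.

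First I would identify $U_{3,1}(A)$ via the product embedding $\prod_{0 \leq i \leq 4}\pi_{i,4-i}$ into $W_A(1)^{\times 5}$, in the spirit of Corollary \ref{prod}. For $(E,C) \in U_{3,1}(A)$, the moduli description $|K(E)| \geq p^4$ and $|K(E) \cap C|=p^3$ forces $pC=K_3(E)$ with $C/pC$ non-canonical in $E/pC$, and $0<h(E)<1/p^2(p+1)$. Propagating the Hasse invariant through the chain $E \to E/p^3C \to E/p^2C \to E/pC \to E/C$ via Theorem \ref{Ka}, the coordinates $(X_0,\dots,X_4)$ are forced to satisfy $v(X_i)=p^i v(X_0)$ for $0 \leq i \leq 3$ and $v(X_4)=p^2 v(X_0)=v(X_2)$, while the non-canonicality of $C/pC$ at the last step manifests as the crucial inequality $X_2 \neq X_4$.

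Next, applying Corollary \ref{key}.1 to each Kronecker equation $F_p^{\beta_0}(X_i,X_{i+1})=0$ yields, modulo higher-order terms,
$$X_{i+1}=X_i^p+\frac{pc_0}{X_i}+pg(X_i) \quad (i=0,1,2),\qquad X_3=X_4^p+\frac{pc_0}{X_4}+pg(X_4),$$
the direction of the last equation being dictated by $v(X_4)<v(X_3)$. Equating the two expressions for $X_3$ and dividing through by $X_2-X_4$ produces, in direct analogy with (\ref{ty1}), the key relation
$$X_2 X_4 (X_2-X_4)^{p-1}=pc_0+\text{higher order terms}.$$
Setting $Z=X_2-X_4$, the constraint $v(X_0)<1/p^2(p+1)$ forces $v(Z)>v(X_2)$, so that $X_2 X_4 \equiv X_2^2$ at leading order, whence $X_2^2 Z^{p-1}=pc_0+\cdots$. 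Extracting a square root splits this into the two asymptotic relations $X_2 Z^{(p-1)/2}=\pm(pc_0)^{1/2}+\cdots$, cutting out the two connected components $U^{\pm}_{3,1}(A)$.

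Finally, iterating $X_{i+1}=X_i^p+\cdots$ twice gives $X_2=X_0^{p^2}+\cdots$, so each component is described by $X_0^{p^2} Z^{(p-1)/2}=\pm(pc_0)^{1/2}+\cdots$. Introducing a parameter $t$ via $X_0=t^{(p-1)/2}+\cdots$, one can then solve $Z=\pm(pc_0)^{1/(p-1)}/t^{p^2}+\cdots$, presenting each $U^{\pm}_{3,1}(A)$ as an annulus parametrized by $t$ with $0<v(t)<2/p^2(p^2-1)$. Since $\gcd((p-1)/2,p)=1$, this yields an annulus of width $2i(A)/p^2(p^2-1)$. The main obstacle will be the careful bookkeeping of the higher-order remainders introduced at every step of the recursion; this is completely parallel to the analogous estimates in Lemma \ref{lee2}, the only genuinely new ingredient being the single extra iteration of the Kronecker recursion that replaces $pq$ by $p^2q$ in the controlling valuations.
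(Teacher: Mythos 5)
Your proposal is correct and matches the paper's argument almost step for step: reduce to $U_{3,1}(A)$ via Atkin-Lehner, embed into $W_A(1)^{\times 5}$ with the stated valuation profile, set $Z = X_2 - X_4$, extract the square-root decomposition $X_2 Z^{(p-1)/2} = \pm(pc_0)^{1/2} + \cdots$, iterate the Kronecker recursion twice to replace $X_2$ by $X_0^{p^2}$, and parametrize by $t$ with $X_0 = t^{(p-1)/2} + \cdots$ and $0 < v(t) < 2/p^2(p^2-1)$. The only differences are presentational: you make explicit the intermediate relation $X_2 X_4 (X_2 - X_4)^{p-1} = pc_0 + \cdots$ (which the paper merely cites as ``in the same way as (\ref{ty1})'') and you spell out why $v(Z) > v(X_2)$, but both proofs handle the higher-order remainders at the same informal level, so there is no essential gap between them.
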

\begin{proof}
We only prove the assertions for $U_{3,1}(A).$
Let $(E,C) \in U_{3,1}(A).$
Note that $pC=K_3(E)$
and $0 <h(E)<1/p^2(p+1).$
For simplicity, we prove the assertions for the case $i(A)=1.$
Similarly as Corollary \ref{prod},
the space $U_{3,1}(A)$ is identified with the following space,
 by the embedding $\prod_{0 \leq i \leq 4} \pi_{i,4-i},$
$$\{(X_i)_{i=0}^{i=4} \in W_A(1)^{ \times 5}|\ 0< v(X_0)<\frac{1}{p^2(p+1)}, v(X_1)=pv(X_0),
v(X_2)=p^2v(X_0), v(X_3)=p^3v(X_0),$$
$v(X_4)=p^2v(X_0), X_2 \neq X_4, F_p^{\beta_0}(X_i,X_{i+1})=0\ (0 \leq i \leq 3)\}.$
Thereby, the equations $F_p^{\beta_0}(X_i,X_{i+1})=0\ (0 \leq i \leq 3)$
induce the followings
$$X_1=X_0^p+\frac{pc_0}{X_0}+pg(X_0), X_2=X_1^p+\frac{pc_0}{X_1}+pg(X_1),$$
$$X_3=X_2^p+\frac{pc_0}{X_2}+pg(X_2), X_3=X_4^p+\frac{pc_0}{X_4}+pg(X_4).$$
In the same way as (\ref{ty1}), by putting $Z:=X_2-X_4,$ we acquire the following
\begin{equation}\label{pmk}
X_2Z^{(p-1)/2}={\pm}(pc_0)^{1/2}+{\rm higher}\ {\rm terms}
\end{equation}
where the valuation of the higher terms is bigger than $1/2.$
Hence, by $v(X_2)<1/(p+1),$ we acquire $v(X_2Z^{(p-1)/2})=1/2$ and $v(X_2)<v(Z).$
Substituting $X_1=X_0^p+(pc_0/X_0)+pg(X_0), X_2=X_1^p+(pc_0/X_1)+pg(X_1)$
to the equality (\ref{pmk}), the following holds
$$X_0^{p^2}Z^{(p-1)/2}={\pm}(pc_0)^{1/2}+{\rm higher}\ {\rm terms}.$$
Therefore, we can write $X_0=t^{(p-1)/2}+{\rm higher}\ {\rm terms}$
and
$Z={\pm}(pc_0)^{1/(p-1)}/t^{p^2}+{\rm higher}\ {\rm terms}$
with $0<v(t)<2/p^2(p^2-1).$
Hence, the required assertions follow from $(p,(p-1)/2)=1.$
\end{proof}

\begin{remark}
Lemma \ref{Lee2} can be proved in the same way as in \cite[the proof of Theorem 9.2]{CM}.
The spaces $U^{\pm}_{3,1}(A)$ and $U_{1,3}^{\pm}(A)$ map onto $B:=A(p^{-\frac{i(A)}{p^2(p+1)}},1)$
via $\pi_{0,3}$ and $\pi_{3,0}$ respectively,
but each with degree $(p-1)/2$
in the same way as in loc.\ cit.
This gives another proof of Lemma \ref{Lee2}. 
\end{remark}

\begin{proposition}\label{nan1}
Let the notation be as above.
Then, the space $U_{2,2}$ is the union of two connected components
$U_{2,2}^{+}$ and $U_{2,2}^{-}.$
Furthermore, the spaces $U_{2,2}^{+}$ and $U_{2,2}^{-}$
are annuli
of width $\frac{2i(A)}{p^2(p^2-1)}.$
\end{proposition}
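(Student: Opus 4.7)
The plan is to extend the proof strategy of Lemma \ref{lee2} by one level, using the identification of $U_{2,2}(A)$ as a subspace of $W_A(1)^{\times 5}$ and exploiting the additional Kronecker relation coming from the fifth coordinate $X_4$. For simplicity take $i(A)=1$; the general case follows by multiplication. By Corollary \ref{prod}.1 combined with Lemma \ref{le} and Theorem \ref{Ka}, $U_{2,2}(A)$ is identified with the subset of $W_A(1)^{\times 5}$ consisting of tuples $(X_0, X_1, X_2, X_3, X_4)$ such that $0 < v(X_0) < 1/p(p+1)$, $v(X_1) = v(X_3) = p\, v(X_0)$, $v(X_2) = p^2 v(X_0)$, $v(X_4) = v(X_0)$, $X_1 \neq X_3$, and $F_p^{\beta_0}(X_i, X_{i+1}) = 0$ for $0 \leq i \leq 3$. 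Applying Corollary \ref{key}.1 in the appropriate valuation ranges yields, modulo higher-order terms,
\begin{align*}
X_1 &= X_0^p + \tfrac{pc_0}{X_0} + pg(X_0), \qquad X_3 = X_4^p + \tfrac{pc_0}{X_4} + pg(X_4), \\
X_2 &= X_1^p + \tfrac{pc_0}{X_1} + pg(X_1) = X_3^p + \tfrac{pc_0}{X_3} + pg(X_3).
\end{align*}

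Equating the two expressions for $X_2$ and simplifying exactly as in the proof of Lemma \ref{lee2} produces the key identity $X_1 X_3 (X_1 - X_3)^{p-1} = p c_0 + \text{higher order}$. Setting $Z := X_1 - X_3$ and using $v(X_1) = v(X_3) < v(Z)$ (valid throughout the above range of $v(X_0)$), this factorizes as
\begin{equation*}
X_1 Z^{(p-1)/2} = \pm (pc_0)^{1/2} + \text{higher order},
\end{equation*}
which produces the decomposition $U_{2,2}(A) = U_{2,2}^{+}(A) \sqcup U_{2,2}^{-}(A)$. On each branch, the subtuple $(X_0, X_1, X_2, X_3)$ admits, exactly as in Lemma \ref{lee2}, the parametrization $X_0 = t^{(p-1)/2} + \cdots$ with $0 < v(t) < 2/p(p^2-1)$, reproducing the annulus description of $U_{2,1}^{\pm}(A)$ under the projection $\pi_f$.

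To incorporate $X_4$, I use the fourth Kronecker relation $X_3 = X_4^p + pc_0/X_4 + \cdots$. Since $X_3 \equiv X_1 \equiv X_0^p$ modulo higher order (as $Z$ has strictly larger valuation than $X_1$), this forces $X_4^p \equiv X_0^p$, exhibiting $X_4$ as one of the $p$ non-canonical preimages of $X_3$, with the $p$ choices differing to leading order by $p$-th roots of unity. Introducing a refined parameter $s$ by setting $X_4 = s^{(p-1)/2} + \cdots$, the compatibility between $X_4^p$ and $X_3$ forces $s^p \equiv t$ modulo units and higher-order corrections, so that each $U_{2,2}^{\pm}(A)$ is parametrized by $s$ with $0 < v(s) < 2/p^2(p^2-1)$, giving an annulus of width $2/p^2(p^2-1)$. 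Multiplying by $i(A)$ yields the stated width for general $i(A)$.

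The main obstacle is to verify that this degree-$p$ refinement from $t$ to $s$ produces a connected cover, rather than $p$ disjoint copies, so that $U_{2,2}^{\pm}(A)$ is a single connected annulus. Equivalently, one must show that the $p$ non-canonical preimages of $X_3$ furnished by the Kronecker equation are cyclically permuted by monodromy as $t$ traverses the base annulus $U_{2,1}^{\pm}(A)$. This is transparent from the explicit parametrization $t \approx s^p + \cdots$, which realizes $\pi_f\colon U_{2,2}^{\pm}(A) \to U_{2,1}^{\pm}(A)$ as the connected degree-$p$ covering $z \mapsto z^p$ of annuli; this is the standard mechanism by which width is divided by $p$, matching the claimed width $2i(A)/p^2(p^2-1)$, and can alternatively be derived via a Newton polygon argument applied to $F_p^{\beta_0}(X_3, X_4) = 0$ in the relevant valuation range.
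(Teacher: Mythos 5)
Your overall strategy — view $\pi_f\colon U_{2,2}(A)\to U_{2,1}(A)$ as a degree-$p$ cover (by the $p$ non-canonical choices of $X_4$), and conclude that each $U^{\pm}_{2,2}(A)$ is an annulus of $1/p$ the width of $U^{\pm}_{2,1}(A)$ — is reasonable in spirit, and the paper even remarks (after Lemma \ref{Lee2}) that degree-counting arguments of this kind, combined with the connectedness lemma \cite[Lemma 2.1]{CM}, can give alternative proofs. However the argument as written has a concrete inconsistency and, more fundamentally, does not actually establish the connectedness that the degree-counting step requires.

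The concrete inconsistency: you set $X_4 = s^{(p-1)/2}+\cdots$ and simultaneously claim $s^p\equiv t$. But $v(X_4)=v(X_0)$ (both are $h(E)$), and you have already taken $X_0=t^{(p-1)/2}+\cdots$; so $v(X_4)=\tfrac{p-1}{2}v(s)$ and $v(X_0)=\tfrac{p-1}{2}v(t)$ force $v(s)=v(t)$, not $v(s)=v(t)/p$. Equivalently: with $X_4=s^{(p-1)/2}$ and $0<v(s)<2/p^2(p^2-1)$ you would have $0<v(X_4)<1/p^2(p+1)$, which is strictly smaller than the actual range $0<v(X_4)<1/p(p+1)$. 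The naive substitution gives $X_4\approx\zeta X_0$ for a $p$-th root of unity $\zeta$, hence $s\approx\zeta' t$ rather than $s^p\approx t$; on the face of it this exhibits the fibre of $\pi_f$ as $p$ disjoint branches, which is precisely the situation you have to rule out, not assume away.

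What is missing is the extra constraint tying $X_4$ to the rest, and this constraint is where the $p$-th-power structure genuinely lives. The paper substitutes \emph{both} $X_1=X_0^p+\cdots$ and $X_3=X_4^p+\cdots$ into $X_1X_3(X_1-X_3)^{p-1}=p(c_0+\cdots)$, obtaining a relation $\mathcal{Z}^p+\cdots=p(c_0+\psi)^p$ in $\mathcal{Z}:=X_0X_4(X_0-X_4)^{p-1}$; then, after the normalizations $\mathcal{Z}_1=\mathcal{Z}/(c_0+\psi)=\gamma+Z$ and $Z_1:=X_0-X_4$, the two pieces of information are (rt), namely $X_0Z_1^{(p-1)/2}=\pm\bigl((c_0\gamma)^{1/2}+\cdots\bigr)$, and (an5), namely $X_0=Z_2^p+\cdots$ where $Z_2:=\gamma_1/(X_0Z)$. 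Combining these gives (an6), $Z_2^pZ_1^{(p-1)/2}=(c_0\gamma)^{1/2}+\cdots$. It is this last relation, with exponents $p$ and $(p-1)/2$ coprime, that yields a single connected parametrization $Z_2=t^{(p-1)/2}$, $Z_1=(c_0\gamma)^{1/(p-1)}/t^p+\cdots$, $X_0=t^{p(p-1)/2}+\cdots$, with $0<v(t)<2/p^2(p^2-1)$ — hence both the connectedness of $U^{\pm}_{2,2}(A)$ and the claimed width. In your argument the term $Z_1=X_0-X_4$ never appears, so you never obtain the coupling (an6), and the assertion that the cover is connected ($z\mapsto z^p$ rather than $p$ disjoint copies) is only asserted, not proved. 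Note also that $X_0$ is \emph{not} $t^{(p-1)/2}$ on $U^{\pm}_{2,2}(A)$ but rather $t^{p(p-1)/2}$; the pullback of the $U^{\pm}_{2,1}$-uniformizer under $\pi_f$ is roughly $t^p$, which is what the degree-$p$ structure amounts to after the fact — but one has to earn it through the relation (an6) rather than read it off from a leading-order approximation of $X_4$.
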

\begin{proof}
First, recall that, if $(E,C) \in U_{2,2},$
we have $0<h(E)<1/p(p+1)$ and $K_2(E)=p^2C.$
Then, similarly as Corollary \ref{prod},
the space $U_{2,2}$ is isomorphic to the following space,
 by the embedding $\prod_{0 \leq i \leq 4} \pi_{i,4-i},$
$$\{(X_i)_{i=0}^{i=4} \in W_A(1)^{\times 5}|\ 0<v(X_0)<\frac{1}{p(p+1)},v(X_1)=pv(X_0),
v(X_2)=p^2v(X_0),v(X_3)=pv(X_0),$$
$v(X_4)=v(X_0), X_1 \neq X_3,F_p^{\beta_0}(X_i,X_{i+1})=0\ (0 \leq i \leq 3)\}.$
We have the followings
$$X_1=X_0^p+\frac{pc_0}{X_0}+pg(X_0), X_2=X_1^p+\frac{pc_0}{X_1}+pg(X_1),$$
$$X_2=X_3^p+\frac{pc_0}{X_3}+pg(X_3), X_3=X_4^p+\frac{pc_0}{X_4}+pg(X_4).$$
We put $X^p-Y^p=(X-Y)^p+ph(X,Y).$
Then, the equality
$X_1^p+(pc_0/X_1)+pg(X_1)=X_3^p+(pc_0/X_3)+pg(X_3)$ induces the following equality
\begin{equation}\label{p1}
X_1X_3(X_1-X_3)^{p-1}=p\biggl(c_0+\frac{g(X_3)-g(X_1)}{X_1-X_3}X_1X_3-\frac{h(X_1,X_3)}{X_1-X_3}X_1X_3\biggr).
\end{equation}
We put
$\mathcal{Z}:=X_0X_4(X_0-X_4)^{p-1}$ and
$\psi:=\frac{g(X_4)-g(X_0)}{X_0-X_4}X_0X_4-\frac{h(X_0,X_4)}{X_0-X_4}X_0X_4.$
Substituting $X_1=X_0^p+(pc_0/X_0)+pg(X_0), X_3=X_4^p+(pc_0/X_4)+pg(X_4)$
to (\ref{p1}),
we obtain
\begin{equation}\label{err}
\mathcal{Z}^p+pc_0\biggl(\frac{1}{X_0^{p+1}}+\frac{1}{X_4^{p+1}}\biggr) \mathcal{Z}^p
+p(c_0+\psi)\mathcal{Z}^{p-1}
=p(c_0+\psi)^p+H(X_0,X_4,\mathcal{Z})
\end{equation}
where $H(X_0,X_4,\mathcal{Z})$
 is a rigid analytic function
satisfying $v(H(X_0,X_4,\mathcal{Z}))>2-(p+1)v(X_0).$ 
Note that we have $v(\mathcal{Z})=1/p.$
We put $\mathcal{Z}_1:=\mathcal{Z}/(c_0+\psi).$
By dividing the above equality (\ref{err}) by $(c_0+\psi)^p,$ we obtain the following
\begin{equation}\label{an1}
\mathcal{Z}_1^p+pc_0\biggl(\frac{1}{X_0^{p+1}}+\frac{1}{X_4^{p+1}}\biggr) \mathcal{Z}_1^p
+p\mathcal{Z}_1^{p-1}
=p+H_1(X_0,X_4,\mathcal{Z}_1)
\end{equation}
where we write $H_1(X_0,X_4,\mathcal{Z}_1)$
for $H(X_0,X_4,\mathcal{Z}_1(c_0+\psi))/(c_0+\psi)^p.$
We choose a root $\gamma$ with $v(\gamma)=1/p$
such that
$\gamma^p+p\gamma^{p-1}=p.$
We introduce a new parameter $Z$ as follows
\begin{equation}\label{an2}
X_0X_4(X_0-X_4)^{p-1}/(c_0+\psi)=\mathcal{Z}_1=\gamma+Z.
\end{equation}
Substituting (\ref{an2}) to (\ref{an1}),
we obtain
\begin{equation}\label{an3}
Z^p+pc_0\gamma^p\biggl(\frac{1}{X_0^{p+1}}+\frac{1}{X_4^{p+1}}\biggr)=H_2(X_0,X_4,Z)
\end{equation}
where $H_2(X_0,X_4,\mathcal{Z})$
 is a rigid analytic function
satisfying $v(H(X_0,X_4,\mathcal{Z}))>2-(p+1)v(X_0).$ 
Here, we have $v(Z) \geq \frac{1}{p}\{2-(p+1)v(X_0)\}$ and hence $v(Z)>v(\gamma)$
by $v(X_0)<1/p(p+1).$
Again, we introduce a new parameter $Z_1$ as follows
$X_0-X_4=Z_1.$ 
Substituting $X_4=X_0-Z_1$ to (\ref{an2}), we acquire the following
\begin{equation}\label{rt}
X_0^2Z_1^{p-1}=c_0\gamma+g(X_0,Z,Z_1)
\end{equation}
with $v(g(X_0,Z,Z_1))>1/p.$ Thereby, we acquire
$2v(X_0)+(p-1)v(Z_1)=1/p$ by (\ref{rt}).
Note that we have $v(Z_1)>v(X_0).$
The equality (\ref{rt}) is decomposed to
$$X_0Z_1^{(p-1)/2}=\pm((c_0\gamma)^{1/2}+h(X_0,Z,Z_1))$$
where we have $v(h(X_0,Z,Z_1))>1/2p.$
We consider
\begin{equation}\label{an4'}
X_0Z_1^{(p-1)/2}=(c_0\gamma)^{1/2}+h(X_0,Z,Z_1).
\end{equation}
Substituting $X_4=X_0-Z_1$ to (\ref{an3}), we obtain the following
\begin{equation}\label{an4}
Z^p+2pc_0\gamma^p/X_0^{p+1}=H_3(X_0,Z,Z_1)
\end{equation}
where we write $v(H_3(X_0,Z,Z_1))>pv(Z).$
By (\ref{an4}), we obtain $v(Z)=(1/p)\{2-(p+1)v(X_0)\}.$
We choose a root
$\gamma_1$ satisfying $\gamma_1^p=-2pc_0\gamma^p.$ We have $v(\gamma_1)=2/p.$
We put $Z_2:=\gamma_1/X_0Z.$
We have $v(Z_2)=v(X_0)/p>0.$
Then, the equality (\ref{an4}) is rewritten as follows
$X_0=Z_2^p+H_4(X_0,Z_1,Z_2)$
with $v(H_4(X_0,Z_1,Z_2))>v(X_0).$
Eliminating $X_0$ from $H_4(X_0,Z_1,Z_2),$
this equality has the following form
\begin{equation}\label{an5}
X_0=Z_2^p+H_5(Z_1,Z_2)
\end{equation}
with some rigid analytic function $H_5(Z_1,Z_2),$
with $v(H_5(Z_1,Z_2))>v(X_0).$
Substituting this to (\ref{an4'}),
the following equality holds
\begin{equation}\label{an6}
Z_2^pZ_1^{(p-1)/2}=(c_0\gamma)^{1/2}+h(Z_1,Z_2).
\end{equation}
We write $Z_2=t^{(p-1)/2}.$
Substituting this to (\ref{an6}), we obtain
$(t^pZ_1)^{(p-1)/2}=(c_0\gamma)^{1/2}+h(Z_1,t).$
Hence we acquire
$t^pZ_1=(c_0\gamma)^{1/(p-1)}+h(Z_1,t).$
By this equality, we can rewrite this as follows
$Z_1=\frac{(c_0\gamma)^{1/(p-1)}}{t^p}+g(t).$
Therefore the parameters $Z_1,Z_2$
are written with respect to the parameter $t.$
Since we have $v(X_0)=(p(p-1)/2)v(t),$
we obtain the required width. 
For case $X_0Z_1^{(p-1)/2}=-((c_0\gamma)^{1/2}+h(X_0,Z,Z_1)),$
we can do the same argument as the one above.
Thus, the required assertions follow.
\end{proof}
\begin{proposition}\label{nan2}
Let the notation be as above.
\\1. We have the following descriptions
$$V_3(A) \cap U_2(A)=\{(E,C) \in W_A(p^4)|\frac{1}{p^2(p+1)}<h(E)<\frac{1}{2p^2},|K(E) \cap C|=p^3\},$$
$$V_2(A) \cap U_2(A)=\{(E,C) \in W_A(p^4)|\frac{1}{2p^2}<h(E)<\frac{1}{p(p+1)},|K(E) \cap C|=p^3\}.$$      
\\2. The intersections $V_3(A) \cap U_2(A)$ and $V_2(A) \cap U_2(A)$ are annuli of width
$\frac{(p-1)i(A)}{2p^3(p+1)}.$
\end{proposition}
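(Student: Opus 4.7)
My plan is to prove Part~1 by translating the valuation conditions on $\pi_{1,2}(E,C)$ and $\pi_{2,1}(E,C)$ into constraints on $h(E)$ via Theorem~\ref{Ka}, and to prove Part~2 by extending the substitution scheme of Lemma~\ref{lee3} with one additional level of $p$-th-root extraction to account for the extra $W_A(1)$ factor in the embedding.

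For Part~1, for $(E,C) \in V_3(A) \cap U_2(A)$ the key observation is that, throughout the relevant Hasse-invariant range, $K_3(E) = pC$, so both $p^2C/p^3C \subset E/p^3C$ and $pC/p^2C \subset E/p^2C$ are the canonical subgroups of order $p$ in each quotient. Using the identification of $W_A(p)$ with an annulus of width $i(A)$ from Subsection~2.2, the valuations of $\pi_{1,2}(E,C)$ and $\pi_{2,1}(E,C)$ equal $i(A)h(E/p^3C)$ and $i(A)h(E/p^2C)$ respectively. Iterating Theorem~\ref{Ka}.2 gives $h(E/p^3C) = ph(E)$ and $h(E/p^2C) = p^2h(E)$, and the defining intervals of $V_3(A)$ and $U_2(A)$ translate into $h(E) \in (1/(p^2(p+1)), 1/(2p^2))$. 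The condition $|K(E) \cap C| = p^3$ follows automatically from $K_3(E) = pC$ and the non-existence of $K_4(E)$ in this range. For $V_2(A) \cap U_2(A)$ the argument is parallel, with Theorem~\ref{Ka}.4 controlling the self-dual crossing $h(E/p^2C) = 1/2$; the resulting range is $h(E) \in (1/(2p^2), 1/(p(p+1)))$.

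For Part~2, I would embed $V_3(A) \cap U_2(A)$ into $W_A(1)^5$ via $\prod_{i=0}^4 \pi_{i,4-i}$ following Corollary~\ref{prod}. For $i(A)=1$ the image is the locus of 5-tuples $(X_0, \ldots, X_4)$ satisfying $F_p^{\beta_0}(X_i, X_{i+1}) = 0$ for $0 \le i \le 3$ and the valuation chain $(v_0, pv_0, p^2v_0, 1-p^2v_0, (1-p^2v_0)/p)$ with $v_0 = v(X_0) \in (1/(p^2(p+1)), 1/(2p^2))$. By Lemma~\ref{22}, each Kronecker equation on a canonical-side pair unfolds to $X_{i+1} = X_i^p + pc_0/X_i + pg(X_i) \pmod{p\alpha}$; in particular we obtain $X_1 = X_0^p + pc_0/X_0 + pg(X_0)$, $X_2 = X_1^p + pc_0/X_1 + pg(X_1) = X_3^p + pc_0/X_3 + pg(X_3)$, and $X_3 = X_4^p + pc_0/X_4 + pg(X_4)$. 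Multiplying $X_1 X_2 = X_1^{p+1} + pc_0 + pX_1g(X_1)$, substituting the alternative middle-regime form of $X_2$ and then inserting $X_1 = X_0^p + \cdots$ and $X_3 = X_4^p + \cdots$, I arrive, after collecting leading-order terms, at the level-raised analog of equation~\eqref{gi},
\[
X_0^{p^2} X_4^p + \frac{pc_0\,X_0^{p^2}}{X_4} = pc_0 + (\text{higher}).
\]
Choosing $\gamma$ with $\gamma^p = pc_0$ and setting $X_0^p X_4 = \gamma + Z$ (justified by $v(X_0^p X_4) = 1/p = v(\gamma)$), the identity $(\gamma + Z)^p = pc_0 + Z^p + (\text{valuation} > 2)$ reduces the equation to the analog of~\eqref{st3},
\[
Z^p + (pc_0/\gamma)\,X_0^{p(p+1)} = (\text{higher}).
\]
Then, with $\gamma_1$ satisfying $\gamma_1^p = pc_0/\gamma$, the substitution $Z_1 = -Z/(\gamma_1 X_0^{p+1})$ absorbs the $X_0^{p(p+1)}$ factor. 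After one further $p$-th-root extraction coming from the additional $F_p^{\beta_0}(X_0, X_1) = 0$ relation (not present in Lemma~\ref{lee3}), the equation rearranges to $X_0 = t^p + (\text{higher})$ with $v(t) \in (1/(p^3(p+1)), 1/(2p^3))$, exhibiting $V_3(A) \cap U_2(A)$ as an annulus of width $(p-1)/(2p^3(p+1))$; the general $i(A)$ case follows by linear scaling. The argument for $V_2(A) \cap U_2(A)$ is the same, with the middle-regime expressions read with $X_2, X_3$ swapped (since now $v(X_2) > 1/2 > v(X_3)$), which is permissible by the symmetry $F_p^{\beta_0}(X,Y) = F_p^{\beta_0}(Y,X)$ from Lemma~\ref{0}.

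The main obstacle is precisely this extra $p$-th-root extraction: applying the substitution scheme of Lemma~\ref{lee3} directly to the shifted chain $(X_1, X_2, X_3, X_4)$ yields only $X_1 = Z_1^p + (\text{higher})$ with $v(Z_1) = v_0$, which corresponds to an annulus of width $(p-1)/(2p^2(p+1))$. The extra $F_p^{\beta_0}(X_0,X_1) = 0$ relation forces a further $p$-th root to descend from $X_1$ to $X_0$, and the delicate point is to verify that the higher-order error terms through all substitutions behave consistently so that the final parameter $t$ is globally well-defined on the annulus, yielding the claimed width $(p-1)i(A)/(2p^3(p+1))$.
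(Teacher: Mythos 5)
Your treatment of Part~1 matches the paper: one translates the defining annular conditions on $\pi_{1,2}(E,C)$ and $\pi_{2,1}(E,C)$ into conditions on $h(E)$ via Theorem~\ref{Ka}, and that is exactly how the paper proceeds.

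For Part~2 the overall strategy is also the same as the paper's --- embed into $W_A(1)^{\times 5}$ via $\prod\pi_{i,4-i}$, unfold the four Kronecker relations, multiply the self-dual pair equation, substitute inward, and track valuations through a chain of parameter changes. However, there is a genuine gap in your substitution scheme. After $X_0^pX_4 = \gamma+Z$ with $\gamma^p=pc_0$, one has $v(Z)=\frac{p-1}{p^2}+(p+1)v(X_0)$; with $v(\gamma_1)=\frac{p-1}{p^2}$ this gives $v\bigl(Z/(\gamma_1X_0^{p+1})\bigr)=0$. So your $Z_1:=-Z/(\gamma_1X_0^{p+1})$ is a \emph{unit}, not a small annulus parameter: the leading terms $Z^p$ and $(pc_0/\gamma)X_0^{p(p+1)}$ balance, leaving $Z_1^p=1+(\text{small})$, and you have not extracted any new parametrizing coordinate. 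The paper handles this by an \emph{additive} shift, setting (its equation (\ref{ci2}))
\[
Z_1 := Z + \gamma_1X_0^{p+1} + \gamma(\gamma+Z)/X_0^{p+1},
\]
so that the leading balance cancels and $Z_1$ captures the next-order residual; this yields (its (\ref{ci3}))
\[
Z_1^p = -\bigl(pc_0/\gamma^2\bigr)\gamma_1\,X_0^{(p+1)^2} + h_1(X_0,Z_1),
\]
and then $Z_2:=Z_1/(\gamma_3X_0^{p+2})$ with $\gamma_3^p=-(pc_0/\gamma^2)\gamma_1$ gives $X_0=Z_2^p+h_3(Z_2)$ with $v(Z_2)=v(X_0)/p\in(1/p^3(p+1),1/2p^3)$, producing the width $(p-1)/2p^3(p+1)$. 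Your ``one further $p$-th-root extraction'' gestures at the right phenomenon, but it is not substantiated, and the incorrect form of $Z_1$ blocks it.

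A smaller but related issue: in your level-raised analog of~\eqref{gi} you drop the term $pc_0X_4^p/X_1$. The paper keeps it (its equation (\ref{bg}) has three terms on the left), and this term is not merely noise: after the change of variables it becomes $pc_0(\gamma+Z)^p/X_1^{p+1}$, which supplies the third summand $\gamma(\gamma+Z)/X_0^{p+1}$ in the corrected definition of $Z_1$ above. Without it, the cancellation that isolates $Z_1$ at the proper valuation will not close. So your sketch captures the spirit and the target width, but the key parameter change --- the additive shift that extracts the extra factor of $p$ in the width --- is missing from your argument.
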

\begin{proof} 
By the definitions of $V_3(A)$ and $U_2(A),$
we have $\pi_{1,2}(E,C) \in A(p^{-\frac{i(A)}{2p}},p^{-\frac{i(A)}{p(p+1)}})$
and $\pi_{2,1}(E,C) \in A(p^{-\frac{i(A)}{2}},p^{-\frac{i(A)}{p+1}}).$
By Theorem \ref{Ka}, we obtain the assertion 1 for $V_3(A) \cap U_2(A).$
For $V_2(A) \cap U_2(A),$ we prove in the same way. We omit the proof.

We prove the assertion 2. We prove the assertion only for $V_3(A) \cap U_2(A),$
because the argument is very similar.
By considering an embedding $\prod_{0 \leq i \leq 4} \pi_{i,4-i}$
as in Corollary \ref{prod},
we acquire
the following description of $V_3(A) \cap U_2(A)$  
$$\{(X_i)_{i=0}^{i=4} \in W_A(1)^{\times 5}|\ \frac{1}{p^2(p+1)}<v(X_0)<\frac{1}{2p^2},v(X_1)=pv(X_0),
v(X_2)=p^2v(X_0),v(X_2)+v(X_3)\\=1,$$
$v(X_3)=pv(X_4),F_p^{\beta_0}(X_i,X_{i+1})=0\ (0 \leq i \leq 3)\}.$
Then, we have $1/(p+1)<v(X_2)<1/2<v(X_3)<p/(p+1),$
$v(X_1)+v(X_4)=1/p, v(X_1)<v(X_4).$
$F_p^{\beta_0}(X_i,X_{i+1})=0\ (0 \leq i \leq 3)\}$  induce the followings
$$X_1=X_0^p+\frac{pc_0}{X_0}+pg(X_0), X_2=X_1^p+\frac{pc_0}{X_1}+pg(X_1),$$
$$X_3=X_2^p+\frac{pc_0}{X_2}+pg(X_2), X_3=X_4^p+\frac{pc_0}{X_4}+pg(X_4).$$
We rewrite
$X_3=X_2^p+\frac{pc_0}{X_2}+pg(X_2)$ as follows
$X_2X_3=pc_0+X_2^{p+1}+pX_2g(X_2).$
Substituting $X_2=X_1^p+(pc_0/X_1)+pg(X_1), X_3=X_4^p+(pc_0/X_4)+pg(X_4)$
to this equality, we obtain the following equality
\begin{equation}\label{bg}
X_1^pX_4^p+pc_0\frac{X_1^p}{X_4}+pc_0\frac{X_4^p}{X_1}=
pc_0+H(X_1,X_4)
\end{equation}
where $H(X_1,X_4)$ is a rigid analytic function satisfying 
$v(H(X_1,X_4))>((p^2-p-1)/p^2)+(p+1)^2v(X_0).$

We choose a root $\gamma$ of the following equation
$\gamma^p=pc_0.$ 
We introduce a new parameter $Z$ as follows
$X_1X_4=\gamma+Z$ with $v(Z)>v(\gamma)=1/p.$
By substituting $X_1X_4=\gamma+ Z$ to (\ref{bg}), we acquire the following equation
\begin{equation}\label{ci1}
Z^p+pc_0X_1^{p+1}/(\gamma+Z)+pc_0(\gamma+Z)^p/X_1^{p+1}=H_1(X_1,Z)
\end{equation}
with $v(H_1(X_1,Z))>((p^2-p-1)/p^2)+(p+1)^2v(X_0).$
Note that $v(Z)=(p-1)/p^2+(p+1)v(X_0)$
and hence $v(Z)>v(\gamma)$ by $v(X_0)>1/p^2(p+1).$
We choose a root $\gamma_1$ such that 
$\gamma_1^p=pc_0/\gamma$.
We introduce a new parameter $Z_1$
as follows
\begin{equation}\label{ci2}
Z+\gamma_1X_0^{p+1}+\gamma(\gamma+Z)/X_0^{p+1}=Z_1.
\end{equation}
By substituting $X_1=X_0^p+\frac{pc_0}{X_0}+pg(X_0)$
to the term $pc_0(\gamma+Z)^p/X_1^{p+1}$ in the
left hand side of the equality (\ref{ci1}),
we acquire the following 
\begin{equation}\label{tri}
\frac{pc_0(\gamma+Z)^p}{X_1^{p+1}}=\frac{pc_0(\gamma+Z)^p}{X_0^{p(p+1)}}+K(X_0,Z)
\end{equation}
where $v(K(X_0,Z))>\frac{p^2-p-1}{p^2}+(p+1)^2v(X_0).$
By substituting $X_1=X_0^p+\frac{pc_0}{X_0}+pg(X_0)$
to the term $pc_0X_1^{p+1}/(\gamma+Z)$ in the
left hand side of the equality (\ref{ci1}),
we acquire
\begin{equation}\label{tri2}
pc_0X_1^{p+1}/(\gamma+Z)=(pc_0/\gamma)X_0^{p(p+1)}-(pc_0/\gamma^2)X_0^{p(p+1)}Z+((pc_0)^2/\gamma)X_0^{p^2-1}
+G(X_0,Z)
\end{equation}
where $G(X_0,Z)$ is a rigid analytic function
whose valuation is the largest in the right hand side.
We have $v((pc_0)^2/\gamma)X_0^{p^2-1})>v((pc_0/\gamma^2)X_0^{p(p+1)}Z),$
 which is equivalent to $v(X_0)<(p^2+1)/2p^2(p+1).$
Hence, by (\ref{tri}) and (\ref{tri2}),
 we acquire the following equality by substituting (\ref{ci2}) to (\ref{ci1})
\begin{equation}\label{ci4}
Z_1^p=(pc_0/\gamma^2)X_0^{p(p+1)}Z+H_2(X_0,Z)
\end{equation}
with $v(H_2(X_0,Z))>v(Z_1^p).$
Note that $v(Z_1)=\frac{1}{p}(\frac{p^2-p-1}{p^2}+(p+1)^2v(X_0)).$
We can rewrite (\ref{ci2}) as follows
$Z=-\gamma_1X_0^{p+1}+h(X_0,Z_1)$ with $v(h(X_0,Z_1))>v(Z).$
Substituting this to the right hand side of (\ref{ci4}), we obtain the following equality
\begin{equation}\label{ci3}
Z_1^p=-(pc_0/\gamma^2)\gamma_1X_0^{(p+1)^2}+h_1(X_0,Z_1)
\end{equation}
with $v(h_1(X_0,Z_1))>v(Z_1^p).$
We choose a root $\gamma_3$ of an equation
$X^p=-(pc_0/\gamma^2)\gamma_1.$ 
We have $v(\gamma_3)=\frac{p^2-p-1}{p^2}.$
We introduce a new parameter $Z_2$ as follows
$Z_2:=Z_1/\gamma_3X_0^{p+2}.$
We have $v(Z_2)>0.$
The equality (\ref{ci3}) is rewritten as follows 
$Z_2^p=X_0+h_2(X_0,Z_2)$
with $v(h_2(X_0,Z_2))>v(X_0).$
Further, this equality can be written as follows
$X_0=Z_2^p+h_3(Z_2)$ with some rigid analytic function $h_3.$
Therefore $X_0,Z_1,Z$ are written with respect to the parameter $Z_2.$
Hence the space $V_3(A) \cap U_2(A)$
is an annulus.
Since we have $1/p^2(p+1)<v(Z_2)<1/2p^3$, we also obtain the required width.
Thus, the assertions have been proved.
\end{proof}

Let $a,b>0$ be positive integers satisfying $a+b=4.$
If $a \leq b,$
 the reduction
of the underlying affinoid $\mathbf{X}^{\pm}_{a,b} \subset W^{\pm}_{a,b}$
is isomorphic to the Igusa curve
$Ig(p^a)$
as mentioned above.
 The reduction of $W_{a,b}$ with $ab=0$
is isomorphic to $\mathbb{P}^1.$
These facts are proved in \cite[Corollary 3.7]{CM}.

We are ready to state and prove our main theorem in this paper.
\begin{theorem}\label{otto}
Let $p \geq 13$ be a prime.
The covering $\mathcal{C}_0(p^4)$ of $X_0(p^4),$ which is made up of
$$\{W^{\pm}_{a,b}|a,b \geq 0, a+b=4\}$$
and the union over all supersingular curves $A$ of
$$\{V_1(A),V_3(A),\hat{V}_2(A),\hat{U}_1(A),\hat{U}_2(A)\} \cup
\mathcal{S}_1(A) \cup \mathcal{S}_2(A) \cup \mathcal{T}(A),$$
is stable (over $\mathbb{C}_p$).
\end{theorem}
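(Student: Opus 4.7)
The plan is to verify that $\mathcal{C}_0(p^4)$ satisfies the two defining properties of a stable covering in the sense of \cite[subsection 2.3]{CM}: every element must be a basic wide open, and every nonempty pairwise intersection of elements must be an open annulus. Once these are checked, the underlying affinoids together with the annular intersections assemble into a semistable model of $X_0(p^4)$, whose arithmetic genus can be matched against $g(X_0(p^4))$ to certify stability. The candidate underlying affinoids are already at hand: $\mathbf{X}^{\pm}_{a,b}\subset W^{\pm}_{a,b}$ (reducing to Igusa curves, \cite{C2} and \cite[Corollary 3.7]{CM}); $\mathbf{Y}^A_{a,b}\subset V_i(A)$ and $\mathbf{Z}^A_{a,b}\subset \hat{U}_j(A)$ whose reductions were computed in Section 4; and $\mathbf{X}_S\subset S$, $\mathbf{X}_T\subset T$ whose reductions are the Artin-Schreier curve $a^p-a=s^2$ and the Deligne-Lusztig curve $a^p-a=t^{p+1}$ (Corollary \ref{go1} and Corollary \ref{op}).

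The intersections split naturally into four families. Ordinary-ordinary intersections inside a single $W_A(p^4)$ are precisely the $U_{a,b}(A)$, which are annuli by Lemmas \ref{Lee1}, \ref{Lee2} and Proposition \ref{nan1}. Ordinary-supersingular intersections coincide with these same $U^{\pm}_{a,b}(A)$, since Theorem \ref{Ka} forces each $W^{\pm}_{a,b}$ to meet exactly one of $V_3(A)$, $\hat{V}_2(A)$, $V_1(A)$ inside $W_A(p^4)$. Supersingular-supersingular intersections are $V_3(A)\cap\hat{U}_2(A)$, $\hat{V}_2(A)\cap\hat{U}_2(A)$, $\hat{V}_2(A)\cap\hat{U}_1(A)$, $V_1(A)\cap\hat{U}_1(A)$; the first two are Proposition \ref{nan2}, and the remaining two will be treated identically by the same Kronecker-polynomial method, embedding via $\prod_{0\le i\le 4}\pi_{i,4-i}$ into $W_A(1)^{\times 5}$ (Corollary \ref{prod}) and chaining the resulting relations with Corollary \ref{key}. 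Finally, the singular residue-class intersections $S\cap\hat{U}_i(A)$ and $T\cap\hat{V}_2(A)$ are by construction $S\setminus\mathbf{X}_S$ and $T\setminus\mathbf{X}_T$, which are annuli whose widths $1/4p^2$, $1/4p^3$, $1/p^2(p+1)$ can be read off from Remark \ref{q1} and the remark following Corollary \ref{op}.

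The hardest part will be confirming that the new bridging piece $\hat{V}_2(A)$ is genuinely basic, namely that the set $\mathcal{T}(A)$ of singular residue classes of $\overline{\mathbf{Y}}^A_{2,2}$ has exactly $p+1$ elements (corresponding to the singular points $(x,y)=(-\zeta,\zeta)$ with $\zeta^{p+1}=-\bar{c}_0$) and that each such class is a basic wide open whose underlying affinoid $\mathbf{X}_T$ has annular complement of the claimed width. The explicit chart $r=\alpha t$, $s=s_0+\alpha^2 s_1$, $Z=\gamma_0+\beta a$ built in subsection 4.4 furnishes a uniformiser on each residue class and identifies $\mathbf{X}_T$ with the vanishing locus of $a^p-a-\bar{c}t^{p+1}$; the vanishing of the relevant first-order derivatives at the singular point (an analogue of Lemma \ref{l1} adapted to the new coordinates) then forces $T\setminus\mathbf{X}_T$ to be a single annulus. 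With this in hand, the genus comparison of subsection 5.3 must be executed as a final sanity check: summing the genera from Table 1 over all supersingular $A$, together with the Igusa contributions $g(\mathrm{Ig}(p^{\min(a,b)}))$, the attached Deligne-Lusztig components, and $1-\chi$ of the dual graph (whose local shape around one supersingular $A$ is Figure 3), should recover exactly $g(X_0(p^4))$, thereby ruling out any missing component and completing the proof that $\mathcal{C}_0(p^4)$ is stable.
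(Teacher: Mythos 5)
Your proposal takes essentially the same route as the paper: assemble the lemmas of subsections 4 and 5.2 to show that every element of $\mathcal{C}_0(p^4)$ is a basic wide open and every nonempty pairwise intersection is an annulus, and confirm via the genus count of subsection 5.3 that no component has been missed. One small correction: the $W^{\pm}_{a,b}$ are pairwise disjoint (the conditions $|K(E)\cap C|=p^a$ for distinct $a$ are mutually exclusive), so there are no ``ordinary-ordinary'' intersections at all; the spaces $U_{a,b}(A)=W_{a,b}\cap W_A(p^4)$ are precisely the ordinary--supersingular overlaps, not a separate family as your first sentence of the intersection analysis implies. Aside from that mislabel, your classification matches what is actually needed: $U_{a,b}(A)$ handled by Lemmas \ref{Lee1}, \ref{Lee2} and Proposition \ref{nan1}; the chains $V_3\cap\hat U_2$, $\hat V_2\cap\hat U_2$, $\hat V_2\cap\hat U_1$, $V_1\cap\hat U_1$ handled by Proposition \ref{nan2} together with the Atkin--Lehner/Kronecker symmetry; and the residue-class complements $S\setminus\mathbf{X}_S$, $T\setminus\mathbf{X}_T$ handled by Remark \ref{q1} and the remark following Corollary \ref{op}. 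Your identification of the underlying affinoids and of the $p+1$ singular residue classes of $\mathbf{Y}^A_{2,2}$ as the key nontrivial input is also in line with the paper's ``computations in Section 4.''
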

\begin{proof}
We imitate the arguments in \cite[the proof of Theorem 9.2]{CM}.
We know that the elements of $\mathcal{C}_0(p^4)$
are wide opens, and that $(S,\mathbf{X}_S)$
is a basic wide open pair for each $S \in \mathcal{S}_i(A),\mathcal{T}(A).$
The $W^{\pm}_{a,b}$
are disjoint from each other, and $W^{\pm}_{a,b}$
intersects $W_A(p^4)$
only at $V_2(A)$
when $a>b,$ only at $V_2(A)$ when $a=b=2$
and only at $V_3(A)$
when $a<b.$
The bulk
of what we have to show is that whenever two wide opens in the cover do intersect,
the intersection is the disjoint union of annuli.
Then we have to show that each wide open is basic, with an underlying affinoid 
that has good reduction.
The required assertion follows from 
Lemmas \ref{Lee1} and \ref{Lee2},
 Propositions \ref{nan1} and \ref{nan2}, and the comuputations in Section 4.
\end{proof}
We have the following partial dual graph of the stable reduction of $X_0(p^4)$

\centerline{
\xygraph{
\circ ([]!{+(0,+.3)} {W^{-}_{3,1}})(-[l]\cdots)(-[r]
\circ ([]!{+(+.1,+.3)} {V_3(A)})(-[dl] \circ ([]!{+(0,-.3)} {W^+_{3,1}})(-[l] \cdots))(-[ul] \circ
([]!{+(0,+.3)} {W_{4,0}})(-[l] \cdots))(-[r]
\circ ([]!{+(0,+.3)} {U_2(A)})-[r]
\circ ([]!{-(+.4,+.3)} {V_2(A)})(-[d] \circ ([]!{+(-.3,-.3)} {W^{-}_{2,2}})(-[d] \vdots))
(-[u] \circ([]!{+(+.3,+.3)} {W^{+}_{2,2}})(-[u] \vdots))(-[r]
\circ ([]!{+(0,+.3)} {U_1(A)})
(-[r]
\circ([]!{+(-.1,+.3)} {V_1(A)})(-[r] \circ([]!{+(0,+.3)} {W^{-}_{1,3}})(-[r] \cdots))
(-[ur] \circ([]!{+(0,+.3)} {W_{0,4}})(-[r] \cdots)) 
(-[dr] \circ([]!{+(0,-.3)} {W^{+}_{1,3}.})(-[r] \cdots))))))}}

We obtain the following corollary as an immediate consequence of Theorem \ref{otto}.
\begin{corollary}\label{jac}
Let $J_0(p^n)$
denote the Jacobian of the modular curve $X_0(p^n).$
Let $ss$ be the number of the supersingular elliptic curves on $X_0(1).$
Then, the toric rank of the Jacobian $J_0(p^4)$
is given by $7(ss-1).$
\end{corollary}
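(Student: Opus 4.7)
\medskip

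\textbf{Proof proposal for Corollary \ref{jac}.}
The plan is to apply the standard fact that if $\mathcal{X}/R_K$ is a stable model of a smooth projective curve $X/K$ with special fiber $\overline{\mathcal{X}}$, then the toric rank of the Néron model of the Jacobian of $X$ equals the first Betti number $b_1(\Gamma)$ of the dual graph $\Gamma$ of $\overline{\mathcal{X}}$. So it suffices to determine $\Gamma$ combinatorially from Theorem \ref{otto} and compute $b_1(\Gamma) = E(\Gamma) - V(\Gamma) + 1$ (the graph is connected since $X_0(p^4)$ is).

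Next, I would read off the vertices and edges of $\Gamma$ from the stable covering $\mathcal{C}_0(p^4)$ in Theorem \ref{otto}, whose components reduce to the irreducible components of $\overline{\mathcal{X}}$. For each supersingular elliptic curve $A/\overline{\mathbb{F}}_p$ the local picture is the one described in Figure 3, giving $5$ ``main'' supersingular vertices ($V_1(A), V_2(A), V_3(A), U_1(A), U_2(A)$) plus many ``tail'' vertices coming from the Deligne--Lusztig components in $\mathcal{T}(A)$ and the Artin--Schreier components in $\mathcal{S}_1(A) \cup \mathcal{S}_2(A)$, each attached to the main graph at a single singular residue class by one edge. Since adding a leaf increases $V$ and $E$ by the same amount, these tails do not affect $b_1(\Gamma)$, so I can work with the pruned graph $\Gamma^{\mathrm{main}}$.

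The pruned graph has $V(\Gamma^{\mathrm{main}}) = 8 + 5\,ss$, the $8$ coming from the ordinary components $W_{4,0}, W_{0,4}, W^{\pm}_{3,1}, W^{\pm}_{2,2}, W^{\pm}_{1,3}$. For the edges, in each supersingular region one reads off from Figure~3 the $4$ internal edges $V_3(A)\!-\!U_2(A)\!-\!V_2(A)\!-\!U_1(A)\!-\!V_1(A)$, the $3$ edges joining $V_3(A)$ to $W_{4,0}, W^{\pm}_{3,1}$, the $2$ edges joining $V_2(A)$ to $W^{\pm}_{2,2}$, and the $3$ edges joining $V_1(A)$ to $W_{0,4}, W^{\pm}_{1,3}$, for a total of $12$ per $A$. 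Hence $E(\Gamma^{\mathrm{main}}) = 12\,ss$, and
\[
b_1(\Gamma) \;=\; b_1(\Gamma^{\mathrm{main}}) \;=\; 12\,ss - (8 + 5\,ss) + 1 \;=\; 7(ss-1).
\]

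The only nontrivial step is verifying that the edges of $\Gamma$ between ordinary and supersingular components are exactly the ones indicated by Figure~3 (equivalently, that the intersection pattern in Theorem \ref{otto} yields no further connections). This follows from Lemmas \ref{Lee1}, \ref{Lee2} and Propositions \ref{nan1}, \ref{nan2}, which show that each intersection $U^{\pm}_{a,b}(A) \cap V_i(A)$ (or $U^{\pm}_{2,2}(A) \cap V_2(A)$) consists of a single annulus, and from the observation that $W^{\pm}_{a,b}$ only meets the supersingular locus $W_A(p^4)$ at the appropriate $V_i(A)$. I expect the main obstacle, if any, to be bookkeeping — making sure that the edge count per supersingular region (and the fact that each ordinary $W^{\pm}_{a,b}$ is a single vertex meeting every $W_A(p^4)$ exactly once per relevant $V_i(A)$) is done correctly, but the genus-computation of subsection 5.3 already provides a sanity check via the relation $g(X_0(p^4)) = \sum g_i + b_1(\Gamma)$.
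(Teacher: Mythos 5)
Your proposal is correct and takes the same approach as the paper: appeal to the standard fact that the toric rank equals the first Betti number of the dual graph of the stable reduction, then read that graph off from the stable covering in Theorem~\ref{otto}. The paper's own proof states only that the result "follows from Theorem~\ref{otto}" without carrying out the bookkeeping; your explicit count — pruning the Artin--Schreier and Deligne--Lusztig leaves, then $V = 8 + 5\,ss$, $E = 12\,ss$, $b_1 = 12\,ss - (8 + 5\,ss) + 1 = 7(ss-1)$ — matches the dual graph displayed after Theorem~\ref{otto} and supplies exactly the detail the paper leaves implicit.
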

\begin{proof}
In general, let $C$ be a projective smooth and geometrically connected curve over a local field $K,$
 and $\mathcal{C}/\mathcal{O}_K$ its stable model. 
Then, the toric rank of the Jacobian $J(C)$ of the curve $C$
is known to be equal to the Betti number of the dual graph of the stable reduction
$\overline{\mathcal{C}}.$
Hence, the required assertion follows from Theorem \ref{otto}.
\end{proof}
\begin{remark}
In general, let $C$ be a projective smooth and geometrically connected curve over a local field $K,$
 and $\mathcal{C}/\mathcal{O}_K$ its stable model.
We assume that all irreducible components in the stable reduction
$\overline{\mathcal{C}}$
are smooth over $\mathbb{F}_K.$
Then, the abelian part of the ${\rm N\acute{e}ron}$
model of the Jacobian of $C$ is isomorphic to the product of the Jacobians 
of each irreducible component in the stable reduction $\overline{\mathcal{C}}.$ 
\end{remark}

\subsection{Genus Calculation in \cite[subsection 5.3]{Mc}}
For the convenience  of a reader, we write down a genus computation of $X_0(p^4)$
already checked in \cite[subsection 5.3]{Mc}.
Let $g_0(p^4)$ be the genus of the modular curve $X_0(p^4)$
and $b(\Gamma)$ the Betti number of the dual graph $\Gamma$
of the stable model of $X_0(p^4).$
In this subsection, we will check the following equality
\begin{equation}\label{q5.3}
g_0(p^4)=\sum_{C:\ {\rm irred.\ comp.}}g(C)+b(\Gamma)
\end{equation}
Let $g_e$ denote the genus of the Igusa curve ${\rm Ig}(p^e)$ for $p>3.$
Then, we have by \cite[Section 0]{Ig}
$$2g_e-2=\frac{1}{24}(p-1)(p^{2e-1}-12p^{e-1}+1)-ss$$
where $ss$ is the numer of the supersingular points including $j=0$
and/or $j=1728$
as in Corollary \ref{jac}.

For simplicity, we deal with the case where $p \equiv 11\ ({\rm mod}\ 12).$
In this case, the genera of ${\rm Ig}(p)$
and ${\rm Ig}(p^2)$
are given by
$$g_1=\frac{p^2-14p+33}{48}$$
$$g_2=\frac{p^4-p^3-12p^2+11p+21}{48}.$$
There are $ss=\frac{p+13}{12}$
supersingular elliptic curves including $j=0$ and $j=1728.$
Thus, we acquire the following equality
$b(\Gamma)=7(ss-1)=7(p+1)/12$
as in Corollary \ref{jac}.
We collect the data for each special affinoid in Table below.
\begin{center}
{\renewcommand\arraystretch{2}
\begin{tabular}{|c|c|c|c|}
\hline  & ${\mathbf{Z}}^A_{1,2},
{\mathbf{Z}}^A_{2,1}$ & ${\mathbf{Y}}^A_{1,3}, {\mathbf{Y}}^A_{3,1}$ 
& ${\mathbf{Y}}^A_{2,2}$ \\ 
\hline  $j(A)=0$ & $\frac{p^2-1}{3}$ & $\frac{p-5}{6}$ & 
$\frac{p^3-5}{6}$ \\ 
\hline  $j(A)=1728$ & $\frac{p^2-1}{2}$ & $\frac{p-3}{4}$ &  $\frac{p^3-3}{4}$\\ 
\hline  otherwise & $p^2-1$ & $\frac{p-1}{2}$ &  $\frac{p^3-1}{2}$\\ 
\hline 
\end{tabular}}
\end{center} 
\begin{center}
Table 2: Total Genera of Supersingular Components of $X_0(p^4)$
\end{center}
Therefore, for the sum of the genera of all components in the reduction
of any given $W_A(p^4),$
we obtain the following
$$G_0=\frac{p^3+4p^2+2p-19}{6}\ (j(A)=0)$$
$$G_{1728}=\frac{p^3+4p^2+2p-13}{4}\ (j(A)=1728)$$
$$G_A=\frac{p^3+4p^2+2p-7}{2}\ (j(A) \neq 0,1728)$$
where $G_A$ is the sum of the genera of the irreducible components which
appear in the reduction of the supersingular locus $W_A(p^4).$
By summing the Betti number and the genera of all components, we acquire the following
$$\frac{7(p+1)}{12}+4g_1+2g_2+G_0+G_{1728}+\frac{p-11}{12}G_A
=\frac{p^3(p+1)}{12}-\frac{p(p+1)}{12}+1.$$
The right hand side is equal to the genus $g_0(p^4)$
by \cite[Proposition 1.40 and 1.43]{Sh}.
Hence, we have checked the equality (\ref{q5.3}).

\subsection{Intersection Data}
We include the intersection multiplicities 
in $X_0(p^n)\ (n=2,3,4)$
below in Tables 3, 4 and 5.
These numbers have been obtained via a rigid analytic reformulation.
In particular, suppose that $X$ and $Y$ are irreducible components
of a curve with semi-stable reduction over some extension $K/\mathbb{Q}_p,$
and that they intersect in an ordinary double point $P.$
Then ${\rm red}^{-1}(P)$
is an annulus, say with width $w(P).$
Let $e_p(K)$ denote the ramification index of $K/\mathbb{Q}_p.$
In this case, the intersection multiplicity
of $X$ and $Y$ at $P$ can be found
by
$$M_K(P):=e_p(K) \cdot w(P).$$
Note that while intersection multiplicity depends on $K,$
the width makes sense even over $\mathbb{C}_p,$
which in some sense makes width a more natural invariant from
the purely geometric perspective as mentioned in
\cite[Section 9.1]{CM}.
Now, for our calculation of $M_K(P)$ on $X_0(p^2),$
we take $e_p(K)=(p^2-1)/2.$
See also \cite[subsection 2.3.2]{E2}.

\begin{center}
\begin{minipage}{.4\textwidth}
\begin{tabular}{|c|c|c|}
\hline  $P$ & \scriptsize{$(\mathbf{X}_{2,0},\mathbf{Y}^A_{1,1}),(\mathbf{X}_{0,2},\mathbf{Y}^A_{1,1})$} & 
\scriptsize{$(\mathbf{X}^{\pm}_{1,1},\mathbf{Y}^A_{1,1})$}  \\ 
\hline
\rule[-13pt]{0pt}{31pt}
$w(P)$ & $\frac{i(A)}{p+1}$ & $\frac{2 \cdot i(A)}{p^2-1}$ \\ 
\hline \rule[-13pt]{0pt}{31pt} $M_K(P)$ & $\frac{p-1}{2} \cdot i(A)$ & $i(A)$\\ 
\hline 
\end{tabular}
\end{minipage}
\end{center}

\begin{center}
Table 3: Intersection Multiplicity Data for $X_0(p^2)$ 
\end{center}

Now, for our calculation of $M_K(P)$ on $X_0(p^3),$
we take $e_p(K)=p^2(p^2-1).$
See also \cite[Table 1 in Section 9]{CM}.

\begin{minipage}{.4\textwidth}
\begin{tabular}{|c|c|c|c|c|}
\hline  $P$ & \scriptsize{$(\mathbf{X}_{3,0},\mathbf{Y}^A_{2,1}),(\mathbf{X}_{0,3},\mathbf{Y}^A_{1,2})$} & 
\scriptsize{$(\mathbf{X}^{\pm}_{2,1},\mathbf{Y}^A_{2,1}),(\mathbf{X}^{\pm}_{1,2},\mathbf{Y}^A_{1,2})$} & 
\scriptsize{$(\mathbf{Y}^{A}_{2,1},\mathbf{Z}^A_{1,1}), 
(\mathbf{Y}^A_{1,2},\mathbf{Z}^A_{1,1})$} 
& \scriptsize{$(\mathbf{X}_S,\mathbf{Z}^A_{1,1})\ (S \in \mathcal{S}(A))$} \\ 
\hline
\rule[-13pt]{0pt}{31pt}
$w(P)$ & $\frac{i(A)}{p(p+1)}$ & $\frac{2 \cdot i(A)}{p(p^2-1)}$ & 
$\frac{(p-1) \cdot i(A)}{2p^2(p+1)}$ & $\frac{i(A)}{4p^2}$\\ 
\hline \rule[-13pt]{0pt}{31pt} $M_K(P)$ & $p(p-1) \cdot i(A)$ & $2p \cdot i(A)$ &  
$\frac{(p-1)^2}{2} \cdot i(A)$ & $\frac{p^2-1}{4} \cdot i(A)$\\ 
\hline 
\end{tabular}
\end{minipage}

\begin{center}
Table 4: Intersection Multiplicity Data for $X_0(p^3)$ 
\end{center}

Now, for our calculation of $M_K(P)$ on $X_0(p^4),$
we take $e_p(K)=p^3(p^2-1).$

\begin{minipage}{.4\textwidth}
\begin{tabular}{|c|c|c|c|c|}
\hline  $P$ & \scriptsize{$(\mathbf{X}_{4,0},\mathbf{Y}^A_{3,1}),(\mathbf{X}_{0,4},\mathbf{Y}^A_{1,3})$} & 
\scriptsize{$(\mathbf{X}^{\pm}_{3,1},\mathbf{Y}^A_{3,1}),(\mathbf{X}^{\pm}_{1,3},\mathbf{Y}^A_{1,3})$} & 
\scriptsize{$(\mathbf{X}^{\pm}_{2,2},\mathbf{Y}^A_{2,2})$} & 
\scriptsize{$(\mathbf{X}_T,\mathbf{Y}^A_{2,2})\ (T \in \mathcal{T}(A))$} \\ 
\hline
\rule[-13pt]{0pt}{31pt}
$w(P)$ & $\frac{i(A)}{p^2(p+1)}$ & $\frac{2 \cdot i(A)}{p^2(p^2-1)}$ & 
$\frac{2 \cdot i(A)}{p^2(p^2-1)}$ & $\frac{i(A)}{p^2(p+1)}$\\ 
\hline \rule[-13pt]{0pt}{31pt} $M_K(P)$ & $p(p-1) \cdot i(A)$ & $2p \cdot i(A)$ &  
$2p \cdot i(A)$ & $p(p-1) \cdot i(A)$\\ 
\hline 
\end{tabular}
\end{minipage}

\begin{minipage}{.4\textwidth}
\begin{tabular}{|c|c|c|}
\hline \scriptsize{$(\mathbf{Z}^A_{2,1},\mathbf{Y}^A_{3,1}),(\mathbf{Z}^A_{1,2},\mathbf{Y}^A_{1,3})$}&
\scriptsize{$(\mathbf{Z}^A_{2,1},\mathbf{Y}^A_{2,2}),(\mathbf{Z}^A_{1,2},\mathbf{Y}^A_{2,2})$} &
\scriptsize{$(\mathbf{Z}^A_{2,1},\mathbf{X}_S)\ 
(S \in \mathcal{S}_2(A)),(\mathbf{Z}^A_{1,2},\mathbf{X}_S)\ (S \in \mathcal{S}_1(A))$} \\
\hline
\rule[-13pt]{0pt}{31pt}
 $\frac{(p-1) \cdot i(A)}{2p^3(p+1)}$ & $\frac{(p-1) \cdot i(A)}{2p^3(p+1)}$ & $\frac{i(A)}{4p^3}$\\
\hline \rule[-13pt]{0pt}{31pt}$\frac{(p-1)^2 \cdot i(A)}{2}$ &  $\frac{(p-1)^2 \cdot i(A)}{2}$  & $\frac{(p^2-1) \cdot i(A)}{4}$\\
\hline
\end{tabular}
\end{minipage}

\begin{center}
Table 5: Intersection Multiplicity Data for $X_0(p^4)$ 
\end{center}
%\begin{remark}
%Let $\{\Gamma_i\}_i$ denote the irreducible components 
%in the stable reduction of the modular curve $X_0(p^4).$
%Let $(\Gamma_i,\Gamma_j)$ denote the intersection multiplicity of 
%$\Gamma_i$ and $\Gamma_j$ in the stable model of $X_0(p^4).$ 
%The connected group $\Phi$ of the Jacobian $J_0(p^4)$ of the modular curve $X_0(p^4)$
%can be computed by an intersection matrix $M:=(\Gamma_i,\Gamma_j)_{ij}.$
%\end{remark}%

\end{document}